\documentclass[10pt]{article}

\usepackage{ifpdf}
\usepackage{graphicx}
\newcommand{\Title}{Mixing times for exclusion
  processes on hypergraphs}
\usepackage[british]{babel}
\newcommand{\Date}{\today}

\usepackage{calc, xspace}
\usepackage{amsmath, amsthm, amsfonts, amssymb, euscript}
\usepackage{enumerate}

\usepackage[longnamesfirst]{natbib}
\setcitestyle{round}
\usepackage[textsize=tiny]{todonotes}

\ifpdf

\usepackage{a4wide}
\parindent 0pt
\parskip \medskipamount

\theoremstyle{plain}
\newtheorem{thm}{Theorem}[section] \newtheorem{prop}[thm]{Proposition}
\newtheorem{lemma}[thm]{Lemma}
\newtheorem{cor}[thm]{Corollary}

\theoremstyle{definition}
 \newtheorem{defn}[thm]{Definition}
\newtheorem{assumption}[thm]{Assumption}
\newtheorem{inbox}[thm]{Box}
\theoremstyle{remark}
\newtheorem{rmk}[thm]{Remark}

\newcommand{\ink}{\mbox{ink}}

\def\cS{\mathcal{S}}

\def\cP{\mathcal{P}}

\def\cL{\mathcal{L}}

\def\cC{\mathcal{C}}

\def\tv{_{\mathrm{TV}}}

\def\P{\mathbb{P}}
\def\E{\mathbb{E}}
\def\eps{\varepsilon}

\newcommand{\indic}[1]{\mathbf{1}_{\left\{#1\right\}}}
\newcommand{\bra}[1]{\left[#1\right]}

\newcommand{\rhofn}{\beta}
\usepackage[arxivVersion]{optional}

\numberwithin{equation}{section}

\usepackage{framed}			
\usepackage[small]{caption}
\usepackage[font=small]{subcaption}

 \usepackage[
 \ifpdf
   pdftex,
   pdfstartview=FitH,
 \fi
 ]{hyperref}

\usepackage{geometry}
 \geometry{
 a4paper,
 total={170mm,257mm},
 left=20mm,
 top=20mm,
 }
\usepackage{fancyhdr}
\pagestyle{fancy}
\lhead{S Connor \& R Pymar}
\rhead{Mixing times for exclusion processes on hypergraphs}

 \begin{document}

 \title{\Title} \author{
   Stephen B. Connor\footnote{Work supported in part by EPSRC Research
     Grant EP/J009180.}\,\,\footnote{Both authors supported in part by LMS Research in Pairs (Scheme 4) Grant 41215.}  \quad and\quad Richard J. Pymar\footnote{Work supported in part by Leverhulme Research Grant RPG-2012-608 held by
Nadia Sidorova and EPSRC Grant EP/M027694/1 held by Codina Cotar.}\,\,\footnotemark[2]}

 \date{\Date}
 \maketitle

\begin{abstract}
We introduce a natural extension of the exclusion process to
   hypergraphs and prove an upper bound for its mixing time. In
   particular we show the existence of a constant $C$ such that for
   any connected, regular hypergraph $G$ within some natural class, the $\varepsilon$-mixing time of the
   exclusion process on $G$ with any feasible number of particles can
   be upper-bounded by $CT_{\text{EX}(2,G)}\log(|V|/\varepsilon)$,
   where $|V|$ is the number of vertices in $G$ and
   $T_{\text{EX}(2,G)}$ is the 1/4-mixing time of the corresponding
   exclusion process with just two particles. Moreover we show this is optimal in the sense that there exist hypergraphs in the same class for which $T_{\mathrm{EX}(2,G)}$ and the mixing time of just one particle are not comparable. The proofs involve an
   adaptation of the \emph{chameleon process}, a technical tool
   invented by \cite{Morris2006a} and developed by
   \cite{Oliveira2013a} for studying the exclusion process on a graph.
\end{abstract}

 \begin{quotation}
   \noindent
   Keywords and phrases:\\
   \textsc{mixing time, exclusion, interchange, random walk,
     hypergraph, coupling}
 \end{quotation}

 \begin{quotation}\noindent
   2000 Mathematics Subject Classification:\\
   \qquad Primary: 60J27, 60K35 \\
   \qquad Secondary: 82C22
 \end{quotation}

%%%%%%%%%%%%%%%%%%%%%%%%%%%%%%%%%%%%%%%

\section{Introduction}

Let $G=(V,E)$ be a finite connected graph with vertex set $V$ and edge
set $E$. Fix $k\in\{1,\ldots,|V|\}$ and consider $k$ indistinguishable
particles moving on $V$ using the following rules:
\begin{enumerate}
\item each vertex is occupied by at most one particle,
\item each edge $e\in E$ rings at the times of a Poisson process of
  rate 1, independently of all other edges,
\item when an edge $e=\{u,v\}$ rings, the occupancy states of vertices
  $u$ and $v$ are switched.
\end{enumerate}

For each $v\in V$ and $t\ge0$, let $\eta_t(v)=1$ if $v$ is occupied at
time $t$, and $\eta_t(v)=0$ if $v$ is vacant at time $t$. The process
$(\eta_t)_{t\ge0}$ is called the \emph{$k$-particle exclusion process
  on $G$}: see Figure \ref{F:exclusion}. In this paper we are
interested in a natural extension of the exclusion process to
\emph{hypergraphs}.

\begin{figure}[!ht]
  \centering
  \includegraphics[scale=1.1]{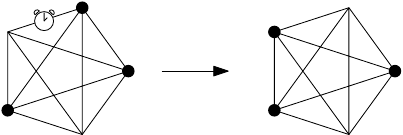}
  \caption{Example transition of 3-particle exclusion process on
    $K_5$. When the edge indicated rings, the single particle
    currently on that edge moves to the vertex at the other end of the
    edge.}
  \label{F:exclusion}
\end{figure}

Let $G=(V,E)$ be a finite connected hypergraph, where $E\subseteq
\mathcal{P}(V)$, the power set of $V$.  For each $e\in E$, denote by
$\cS_e$ the symmetric group on the elements in $e$, and let $f_e:\mathcal{S}_e \to
[0,1]$ be a probability measure on $\cS_e$. We write $f$ to denote
$\{f_e:\,e\in E\}$, the set of these measures. Consider $k$
indistinguishable particles moving on $V$ using rules 1. and 2. above
and in addition:
\begin{enumerate}
\item [3$'$.] when an edge $e$ rings, a permutation $\sigma\in
  \mathcal{S}_e$ is chosen according to $f_e$ and every particle on a
  vertex in $e$ moves simultaneously according to $\sigma$, i.e. a
  particle at vertex $v$ moves to vertex $\sigma(v)$. (Note that as
  $\sigma$ is a permutation, rule 1.\! is preserved.)
\end{enumerate}
Setting $\eta_t(v)=1$ if $v$ is occupied at time $t$ and 0 otherwise,
we obtain a process $(\eta_t)_{t\ge0}$ referred to as the
\emph{$k$-particle exclusion process on $(f,G)$}, or simply EX($k,
f,G$): see Figure \ref{F:exclusion2}. (Note that if each edge $e\in E$
contains exactly two vertices, and $f_e$ puts all of its mass on the
transposition belonging to $\mathcal S_e$, then EX($k, f,G$) is just
the $k$-particle exclusion process on the graph $G$, as above.)

\begin{figure}[!ht]
  \centering
  \includegraphics[scale=1.1]{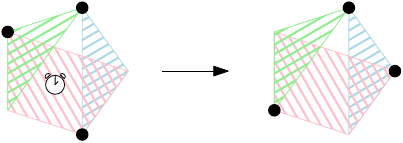}
  \caption{Example transition of 3-particle exclusion process on a
    hypergraph with 5 vertices and 3 edges (indicated by the different
    shaded regions, i.e. here there are two edges of size 3 and one of size 4). When the edge containing four vertices rings, the
    two particles currently belonging to that edge are permuted.}
  \label{F:exclusion2}
\end{figure}

Our main aim in this paper is to study the total-variation mixing time
of EX($k, f,G$), and to establish an upper bound in terms of the
mixing time of EX($2,f,G$). Recall that for a continuous-time Markov process $X$ on a finite set
$\Omega$ with transition probabilities $\{q_t(x,y)\}$ and equilibrium
distribution $\pi$, the total variation $\varepsilon$-mixing time is
defined as
\begin{align}\label{eq:tmixdef}
  T_X(\varepsilon):=\inf\left\{t\ge0:\,\max_{x\in\Omega}\|q_t(x,\cdot)-\pi\|_{\mathrm{TV}}\le
    \varepsilon\right\},
\end{align}
where $\|\cdot\|_{\mathrm{TV}}$ is the total-variation norm. 

In several parts of the proof it will be useful to consider the associated process where the $k$
particles are distinguishable.  Suppose the particles are labelled
$1,\ldots,k$ and set $\hat\eta_t(v)$ to be the label of the particle
at vertex $v$ at time $t$. If there is no particle at $v$ at time $t$,
set $\hat\eta_t(v)=0$. The process $(\hat\eta_t)_{t\ge0}$ is the
\emph{$k$-particle interchange process on $(f,G)$}, or simply
IP$(k,f,G)$. Note that the exclusion process may be recovered from the interchange process simply by `forgetting' the labels of the particles.

Throughout we will make the following assumptions about the hypergraph $G$ and the set of measures $f$ (with notation appearing below being formally defined in Section \ref{S:processconstr}).

\begin{assumption}\label{assumpf}\
  \begin{enumerate}
  \item\label{assumpf-regular} The hypergraph $G$ is regular (every vertex has the same degree).
  \item\label{assumpf-conjugacy} For every $e$, $f_e$ is constant on the conjugacy classes of
    $\cS_e$ (i.e. in group-theoretic terms, $f_e$ is a \emph{class
      function}). That is, if $\sigma_1$ and $\sigma_2$ are elements
    from $\mathcal{S}_e$ with the same cycle structure, then
    $f_e(\sigma_1)=f_e(\sigma_2)$.
  \item\label{assumpf-fixed} For every $e$ and each $v\in e$, $\sum_{\sigma\in
      \cS_e}f_e(\sigma)\indic{\sigma(v)= v}\le 1/5$. In other words, the probability (under $f_e$) of a
    vertex $v\in e$ being a fixed point of $\sigma$ is at most $1/5$.
   \item\label{assumpf-irreducible} The interchange process IP$(k,f,G)$ is \emph{irreducible} for any number of particles $k\in\{1,\dots,|V|-1\}$.
  \end{enumerate}
\end{assumption}
These assumptions are more than enough to imply that the exclusion process is reversible and ergodic, with uniform stationary distribution. Although we state it as an assumption on $f$, the fourth assumption also implies that the underlying hypergraph $G$ is connected. Our main theorem is the following:

\begin{thm}\label{T:mainexcl}
  There exists a universal constant $C>0$ such that for every
  $(f,G)$ satisfying Assumption \ref{assumpf} and every
  $k\in\{1,\ldots,|V|-1\}$ and $\eps>0$,
  \[
  T_{\mathrm{EX}(k,f,G)}(\eps)\le C\log
  (|V|/\varepsilon)T_{\mathrm{EX}(2,f,G)}(1/4).
  \]
\end{thm}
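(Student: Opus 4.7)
The plan is to adapt the chameleon-process framework of \cite{Morris2006a,Oliveira2013a} to the hypergraph setting. Since EX$(k,f,G)$ is a deterministic projection of the interchange process IP$(k,f,G)$ (obtained by forgetting labels), it suffices to prove the stronger bound $T_{\mathrm{IP}(k,f,G)}(\varepsilon) \le C\log(|V|/\varepsilon)T_{\mathrm{EX}(2,f,G)}(1/4)$. A standard reduction then shows that bounding the TV mixing of IP$(k,f,G)$ follows from controlling, for each tagged particle $i$ and each vertex $v_0\in V$, the quantity $\bigl|\bbp(\hat\eta_t(v_0)=i)-1/|V|\bigr|$, losing only an $O(\log|V|)$ multiplicative factor via a union bound over the $|V|$ possible positions of the particle.

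I would then introduce the hypergraph chameleon process to produce this decay. Each site in $V$ is assigned a colour: \emph{black} (representing the location of the tagged particle under one coupled dynamics), \emph{white} (representing its absence under the other), or \emph{pink} (representing a site whose contribution to the coupling is still undetermined). The update rule mirrors the interchange dynamics: when an edge $e$ rings, a permutation $\sigma\sim f_e$ is drawn and all coloured particles on $e$ are permuted according to $\sigma$. When $\sigma$ would act nontrivially on a black particle and a white site belonging to the same cycle, a carefully designed \emph{pinkening} step converts a black/white pair into two pink particles. The total pink mass (the \emph{ink}) is coupled to the TV distance one wishes to bound, and the analysis reduces to showing that ink grows rapidly on average.

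The core estimate is an \emph{ink-doubling lemma}: within any time window of length $O(T_{\mathrm{EX}(2,f,G)}(1/4))$, the expected normalised ink is at least doubled, up to near-saturation. The two-particle exclusion mixing time enters precisely here, because conditional on the locations of the non-pink particles, any given pair of pink particles evolves jointly as (a time-change of) EX$(2,f,G)$, so the rate at which new pink mass is generated is controlled by this mixing time. Iterating the ink-doubling lemma $O(\log|V|)$ times forces ink saturation, which by construction translates into the desired TV closeness and yields the claimed bound.

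The main obstacle will be the design and analysis of the multi-site pinkening rule for hyperedges. In the graph case, each ringing edge touches at most two sites and the swap rule is immediate; but for a hyperedge of size greater than two, the drawn permutation $\sigma$ may have rich cycle structure and many coloured participants, so one must specify a consistent joint update of all three colours on $e$. Assumption \ref{assumpf}(1) (that each $f_e$ is a class function) is what allows the pinkening rule to be defined coherently on cycles without reference to any particle labelling, while Assumption \ref{assumpf}(2) (bounded fixed-point probability) provides the quantitative lower bound needed to ensure that black/white pairs are pinkened at a sufficient rate for the comparison to EX$(2,f,G)$ to close up. Verifying that the marginals of each coloured sub-process reproduce the correct interchange dynamics — a necessary consistency check for the coupling to be valid — is likely to be the most delicate piece of bookkeeping.
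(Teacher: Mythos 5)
Your proposal captures the broad architecture — reduce to the interchange process, build a hypergraph chameleon process, use an ink-growth estimate — and correctly identifies why Assumption~\ref{assumpf}(1) (class functions) and Assumption~\ref{assumpf}(2) (bounded fixed-point probability) are needed. But there is a structural gap: you are trying to compare directly to $T_{\mathrm{EX}(2,f,G)}$, and this does not go through.

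The graph version of the chameleon argument in \citep{Oliveira2013a} leans on the \emph{negative correlation} property of the exclusion process to close the comparison with a small number of particles. As shown in Section~\ref{s:neg_cor} of this paper, negative correlation \emph{fails} for exclusion on hypergraphs (a single hyperedge of size 4 with $f$ uniform on 4-cycles already violates it). Because of this, the chameleon argument here cannot be run against pairs of particles: the pinkening rule on a hyperedge must inspect sets of \emph{three or four} vertices in a cycle of the chosen permutation (one red and three whites, or one white and three reds, as in \eqref{E:A_i_defn2}) in order to construct a paired permutation $\tilde\sigma$ with the same cycle type that leaves all black trajectories unchanged. Consequently the ink-growth estimate (Lemma~\ref{L:lossofred}, proved via Proposition~\ref{P:red_white_mix}) is governed by the mixing of \emph{four}-particle exclusion, and the phase length is $T = 20T_{\mathrm{EX}(4,f,G)}(1/4)$, not $O(T_{\mathrm{EX}(2,f,G)}(1/4))$. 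Your claim that ``any given pair of pink particles evolves jointly as a time-change of EX$(2,f,G)$'' is not the mechanism available here; in fact pink particles do not drive the pinkening rate, the red/white four-tuples do.

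The actual proof therefore proceeds in two stages. First, Lemma~\ref{L:kto4} establishes $T_{\mathrm{IP}(k,f,G)}(\eps) \le C\log(|V|/\eps)\,T_{\mathrm{EX}(4,f,G)}(1/4)$ via the chameleon process (Lemma~\ref{L:existence}). Second, a \emph{separate} argument — Lemma~\ref{L:4to2} — shows $T_{\mathrm{EX}(4,f,G)}(1/4) \le \lambda T_{\mathrm{EX}(2,f,G)}(1/4)$; this uses a dichotomy into ``easy'' and ``non-easy'' hypergraphs depending on the meeting time of two independent random walkers relative to $T_{\mathrm{EX}(2,f,G)}$, with a coupling argument in the easy case and a comparison with independent random walkers in the non-easy case. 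This second stage is an essential ingredient your sketch has no analogue of, and without it the comparison to two-particle exclusion does not close. You also need the small-$|V|$ case ($|V|<36$) handled separately (Lemma~\ref{L:smallV}), since the ink-growth estimate only works for $|V|\geq 36$. Finally, a bookkeeping note: the paper's chameleon process uses \emph{four} colours (black, red, pink, white), and the ink at a vertex is $\indic{v\in R_t}+\frac12\indic{v\in P_t}$; black particles are never recoloured and never contribute ink, so your three-colour scheme misidentifies the objects whose growth drives the argument.
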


\begin{rmk}
In all further statements we implicitly assume that Assumption~\ref{assumpf} holds.
\end{rmk}
\begin{rmk}\label{trans3cycles}
The exclusion process on a hypergraph $G$ with the edge set $E$ consisting only of edges of size 2 or 3 exhibits the \emph{negative correlation} property (which we shall discuss further in the sequel). As a result, for this subset of hypergraphs we can actually extend the main theorem, replacing the $T_{\mathrm{EX}(2,f,G)}(1/4)$ appearing in the right-hand side by $T_{\mathrm{EX}(1,f,G)}(1/4)$ (note that we later refer to $\mathrm{EX}(1,f,G)$ as $\mathrm{RW}(1,f,G)$, in recognition that the exclusion process with just one particle is equivalent to a single particle performing a random walk).
\end{rmk}

\begin{rmk}
A simple example suffices to show that Theorem~\ref{T:mainexcl} is optimal in the sense that we cannot replace $T_{\mathrm{EX}(2,f,G)}(1/4)$ on the right-hand side with $T_{\mathrm{EX}(1,f,G)}(1/4)$ (even under our standing assumptions). Let $G=(V,E)$ with $V=\{1,1',2,2',\ldots,m,m'\}$ and \mbox{$E=\{\{i,i',j,j'\}:\,i\neq j\}$}. Suppose that $f_{\{1,1',2,2'\}}(\sigma)=1/6$ if $\sigma$ is  a cycle of size 4 (and otherwise $f_{\{1,1',2,2'\}}(\sigma)=0$). For $\{i,i',j,j'\}\neq\{1,1',2,2'\}$, we set $f_{\{i,i',j,j'\}}(\sigma)=1/3$ if $\sigma$ is a composition of two disjoint transpositions (and otherwise $f_{\{i,i',j,j'\}}(\sigma)=0$). It can be readily checked that this hypergraph satisfies Assumption~\ref{assumpf}. Note that there are $\binom{m}{2}$ edges, each ringing at rate 1. It is easy to see that a random walker mixes in time of order $1/m$ since each vertex is in order $m$ edges. Now consider a 2-particle exclusion process started from $\{3,3'\}$. Notice that up until the first time that both particles occupy vertices belonging to the edge $\{1,1',2,2'\}$, if vertex $i$ is occupied then vertex $i'$ is also occupied. So the process cannot mix until the edge $\{1,1',2,2'\}$ is visited by the particles. Regardless of where the particles are before this time, there are always two edges that can ring which would bring the particles to the set $\{1,1',2,2'\}$, and so this happens at rate $2$; we conclude that it takes a time of order 1 for the 2-particle exclusion process to mix.
%Let $G=(V,E)$ with $V=\{1,2,3,4\}$ and $E=\{V\}$. Suppose $f$ gives probability $1-\delta$ to permutations composed of two disjoint transpositions, and probability $\delta$ to 4-cycles. This hypergraph is clearly regular and $f$ satisfies Assumption~\ref{assumpf} for any $\delta\in(0,1]$. Notice that the 2-particle exclusion process cannot mix until a 4-cycle is chosen, whereas this event is not necessary for the 1-particle exclusion process to mix. Hence we have $T_{\mathrm{EX}(2,f,G)}(1/4)/T_{\mathrm{EX}(1,f,G)}(1/4)\to\infty$ as $\delta\to0$.
\end{rmk}

\subsection{Motivation and connections with the
  literature}\label{S:motiv}
Our results contribute to the general question of when properties of a
multi-particle system can be deduced from properties of a system with
only a few particles.  Arguably the most significant recent result in
this area has come from \cite{Caputo2010} who showed that the spectral
gap of the interchange process on a graph is equal to the spectral gap
of a random walker on the same graph, proving a conjecture of Aldous
that had been open for 20 years. Proving results in this area is
particularly important in applications since the large reduction in
the size of the state space often makes it much easier to
compute or estimate statistics.

While interacting particle system models (e.g. exclusion process,
interchange process, voter model, contact process, zero range process)
on graphs have received considerable attention, there has so far been
little study of such processes on hypergraphs. Studying these
processes on hypergraphs is very natural though, as hypergraphs allow
simultaneous interactions of multiple particles, rather than only
pair-wise interactions. One model for which its analogue on
hypergraphs has been recently studied is the voter model
(\cite{Chung2014, Istrate2014}), for which various properties are
considered, including the mixing time.

Any interchange process (with $k=|V|$) on a graph can be viewed as a
card shuffle by transpositions, and there is now an extensive literature concerning mixing times of such shuffles. Notable examples include the
top-to-random transposition shuffle (star graph;  \cite{Flatto1985}), random-to-random
transposition shuffle (complete graph; \cite{Diaconis1981}) and nearest-neighbour transposition shuffle
(the cycle; \cite{Lacoin2016}). Of course, transposition shuffles are just one class of shuffle, and
there is significant interest in mixing times of more general shuffles
in which multiple cards are moved simultaneously. A large class of
time-homogeneous shuffles can be represented as interchange processes
on hypergraphs; recent examples can be found in \citep{Sengul2014} and \citep{Hough2015}.

Achieving tight bounds on the mixing time of an interacting particle system typically involves
finding an argument tailored specifically to the model in question. If
we care less about the specific constant multiple (at which mixing
occurs) and instead focus on the order, a result of
\cite{Oliveira2013a} can prove particularly useful as a general way of
bounding mixing times of exclusion processes:

\begin{thm}[\cite{Oliveira2013a}]\label{t:Oliveira_IP}
  There exists a constant $C>0$ such that for every connected weighted
  graph $G$ and every $k\in\{1,\ldots,|V|-1\}$ and $\eps\in(0,1/2)$,
  \[
  T_{\mathrm{EX}(k,G)}(\eps)\le C\log
  (|V|/\varepsilon)T_{\mathrm{RW}(G)}(1/4)\,,
  \]
  where $T_{\mathrm{RW}(G)}(1/4)$ is the mixing time of the random
  walk on $G$. 
\end{thm}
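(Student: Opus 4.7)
The plan is to follow the chameleon process method of Morris, as refined by Oliveira. The first steps are standard reductions. Since EX$(k,G)$ is a factor of the interchange process IP$(k,G)$ (obtained by forgetting labels), it suffices to bound the mixing time of IP$(k,G)$; by particle--hole duality one may further assume $k\le|V|/2$. Bounding the mixing time of IP$(k,G)$ then reduces, via a path-coupling argument, to bounding the coupling time of two copies of IP$(k,G)$ whose initial configurations differ by a single transposition along an edge $\{u,v\}$.

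The central construction is the \emph{chameleon process}, which couples these two IP trajectories while tracking the unresolved disagreement. Each particle is colored: \emph{black} if present in both copies, \emph{white} if absent from both, and \emph{pink} at the two vertices constituting the disagreement. The colored particles evolve under the graph's Poisson edge clocks, exchanging according to the exclusion rule with particles and holes of any color; in addition, at designed \emph{inking} events --- triggered when two pink particles encounter each other across an edge --- a fair coin flip either converts the two pinks into a black and a white (removing pink mass) or preserves the configuration. The coupling is engineered so that the TV distance between the two IP copies is bounded in terms of the expected total pink mass.

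The heart of the argument is a halving lemma: over any time window of length $C'\,T_{\mathrm{RW}(G)}(1/4)$, the expected total pink mass decreases by a constant multiplicative factor, uniformly in the current configuration. Its proof exploits that the two pink particles, viewed on their own, perform a copy of the two-particle exclusion process, which on a graph is comparable (via the spectral gap identity for the interchange process, \citep{Caputo2010}) to two independent random walks on the time scale $T_{\mathrm{RW}(G)}$. A hitting-time estimate then shows that, within one random walk mixing time, the pinks have a constant probability of meeting along an edge and being successfully inked. Iterating the halving $O(\log(|V|/\varepsilon))$ times brings the expected pink mass below $\varepsilon/|V|$, and a union bound over the $|V|$ possible single-transposition disagreements gives the theorem.

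The main obstacle is the proof of the halving lemma. The pinks are not independent random walkers: they interact with each other and with the black particles via exclusion, and the bookkeeping of inking and depinking events must preserve the martingale structure required for the coupling to be valid. Obtaining the sharp factor $T_{\mathrm{RW}(G)}$, rather than a larger quantity such as the meeting time of two random walkers, requires a carefully randomized color update (Oliveira's refinement of Morris's original inking protocol) together with treating the pink mass as a supermartingale whose increments are controlled by a spectral-type analysis of the two-particle dynamics.
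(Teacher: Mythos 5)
This theorem is quoted from \citep{Oliveira2013a} and not proved in the present paper, but the paper's Sections 5--6 reproduce (and adapt) the actual chameleon-process argument, and your sketch does not match it; more importantly, your central step would fail. The chameleon process is \emph{not} a coupling of two IP copies differing by a single transposition in which two ``pink'' disagreement particles annihilate into a black and a white when they meet across an edge. It is a device for representing the conditional law of one tagged particle given the other $k-1$: there are $k-1$ black particles, and the remaining $|V|-k+1$ vertices carry red, white or pink particles, with $\mathrm{ink}_t(v)=\indic{v\in R_t}+\tfrac12\indic{v\in P_t}$ satisfying $\P[\mathbf{x}_t^{\mathrm{IP}}=\mathbf{b}]=\E[\mathrm{ink}_t(b)\indic{\mathbf{z}_t^C=\mathbf{c}}]$ (part 2 of Lemma \ref{L:existence} here, Oliveira's Lemma 6.1). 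Pink is \emph{created} when a red and a white particle sit on a ringing edge, and is removed only at global depinking times, when \emph{all} pinks are recoloured red or white by one fair coin; the reduction over starting states is a coordinate-replacement/triangle-inequality argument (resampling the $k$th coordinate uniformly), not path coupling over single-transposition discrepancies.

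The genuine gap is in your ``halving lemma''. You argue that the two marked particles ``have a constant probability of meeting along an edge'' within one random-walk mixing time; this is false in general (on an expander the meeting time of two walkers is of order $|V|$ while $T_{\mathrm{RW}}(1/4)$ is of order $\log|V|$), so a bound built on a single pair meeting cannot yield the $T_{\mathrm{RW}(G)}$ time scale --- precisely the difficulty the easy/non-easy dichotomy of Section \ref{S:4to1} is designed to address. Oliveira's (and this paper's) resolution is an averaging argument over the \emph{many} red and white particles: in a phase of length $O(T_{\mathrm{RW}})$ (here $O(T_{\mathrm{EX}(4,f,G)})$), the expected number of red--white encounters is at least a constant fraction of $\min\{|R|,|W|\}$ (cf.\ Lemma \ref{L:lossofred}), which combined with the ink martingale, the $\mathrm{Fill}$ conditioning, and an exponential moment bound on depinking times gives the geometric decay that is then iterated $O(\log(|V|/\eps))$ times. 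Also, the appeal to the spectral gap identity of \citep{Caputo2010} is neither part of Oliveira's proof nor sufficient as stated: comparability of spectral gaps does not by itself give the hitting/meeting estimate you invoke. (Minor point: particle--hole duality is a statement about EX, so it should be applied before passing to IP.)
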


Our main result
extends Theorem~\ref{t:Oliveira_IP} to a class of hypergraphs. Furthermore, our
results hold for a large class of measures acting on the symmetric
group $\mathcal{S}_{|V|}$ which goes beyond the standard framework
studied by previous authors, in which a conjugacy class is fixed and
then sampled from uniformly (e.g. \cite{Lulov2002, Sengul2014}). Indeed, our measures $f_e$ can vary
dramatically between edges $e\in E$, and furthermore we do not
require each $f_e$ to be supported on a fixed conjugacy class.

\subsection{Heuristics and structure of the proof}
   
The proof of Theorem \ref{T:mainexcl} depends on the size of the vertex set $V$. If $|V|$ is sufficiently small, the proof is fairly simple and we state the result as the following lemma:
\begin{lemma}\label{L:smallV}
  There exists a constant $C>0$ such that for every hypergraph $G=(V,E)$ with $|V|<36$, every $f$ and every $k\in\{1,\ldots,|V|/2\}$ and $\eps>0$,
  \[
  T_{\mathrm{EX}(k,f,G)}(\eps)\le C\log
  (1/\varepsilon)T_{\mathrm{EX}(2,f,G)}(1/4).
  \]
\end{lemma}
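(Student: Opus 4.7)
The plan is to reduce Lemma~\ref{L:smallV} to a spectral gap comparison between $\mathrm{EX}(k,f,G)$ and $\mathrm{EX}(2,f,G)$, then exploit the constraint $|V|<36$ to absorb all combinatorial factors into a universal constant. First I would apply the standard inequality $T_{\mathrm{mix}}(\varepsilon)\le t_{\mathrm{rel}}\log(1/(\varepsilon\pi_{\min}))$. Since the stationary distribution of $\mathrm{EX}(k,f,G)$ is uniform on at most $\binom{|V|}{k}\le 2^{|V|}<2^{36}$ configurations, $\log(1/\pi_{\min})$ is bounded by an absolute constant, giving
\[
T_{\mathrm{EX}(k,f,G)}(\varepsilon)\le c_1\log(1/\varepsilon)\, t_{\mathrm{rel}}^{(k)}.
\]
In the other direction, the standard converse $t_{\mathrm{rel}}^{(2)}\le c_2 T_{\mathrm{EX}(2,f,G)}(1/4)$ holds. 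The lemma therefore reduces to proving $t_{\mathrm{rel}}^{(k)}\le C'\, t_{\mathrm{rel}}^{(2)}$ with $C'$ universal.

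For this spectral gap comparison I would use a Dirichlet form argument. Given a mean-zero $F:\Omega_k\to\mathbb{R}$, for each unordered pair $\{a,b\}\subseteq V$ define the 2-particle projection $F_{a,b}:\Omega_2\to\mathbb{R}$ by averaging $F$ over all $k$-configurations that extend $\{a,b\}$ by $k-2$ further occupied sites. Because the same edge-ring events and sampled permutations drive both $\mathrm{EX}(k,f,G)$ and $\mathrm{EX}(2,f,G)$, one expects
\begin{enumerate}
\item[(i)] a variance decomposition $\mathrm{Var}(F)\le C_1\sum_{\{a,b\}}\mathrm{Var}(F_{a,b})$;
\item[(ii)] a Dirichlet form lower bound $\mathcal{E}^{(k)}(F,F)\ge C_2\sum_{\{a,b\}}\mathcal{E}^{(2)}(F_{a,b},F_{a,b})$.
\end{enumerate}
Together these give $\gamma^{(k)}\ge (C_2/C_1)\gamma^{(2)}$. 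The constants $C_1,C_2$ depend on $|V|$, on $k$, and on the edge-size profile of $G$, but under $|V|<36$ they are all bounded absolutely, so $C_1/C_2$ can be taken universal.

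The main obstacle is step (ii). For exclusion on a graph each edge-ring is a single transposition and factors cleanly through a two-particle transition; on a hypergraph a ringing edge may permute many particles simultaneously, and the projection $F\mapsto F_{a,b}$ can wash out the contribution of any $\sigma$ that fixes both $a$ and $b$. This is where Assumption~\ref{assumpf}(2), bounding the probability that a vertex is a fixed point by $1/5$, should enter: it guarantees sufficient non-trivial Dirichlet mass survives the projection. The combinatorial bookkeeping is still delicate, but because $|V|<36$ one may afford to lose constants of size $|V|!$ at each step without harming the final universal bound.
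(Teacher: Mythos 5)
Your route is genuinely different from the paper's: you reduce to a spectral-gap comparison, while the paper observes that $|V|<36$ forces $G$ to be \emph{easy} (via Proposition~\ref{P:AldousFill}, for $t\ge 2T_{\mathrm{RW}(f,G)}(\varepsilon)$ two independent walkers have already met with probability at least $(1-2\varepsilon)^2/|V|\ge(1-2\varepsilon)^2/36$), and then iterates the coupling result Lemma~\ref{L:ktok-1} from $k$ down to $2$; since $k\le |V|/2<18$, the number of iterations (and hence the accumulated constant $\kappa^{k-2}(\log(1/4))^{k-3}$) is universally bounded. Your first reduction --- using $T_{\mathrm{mix}}(\varepsilon)\le t_{\mathrm{rel}}\log(1/(\varepsilon\pi_{\min}))$ together with $\pi_{\min}^{-1}\le 2^{36}$ and the converse $t_{\mathrm{rel}}^{(2)}\lesssim T_{\mathrm{EX}(2,f,G)}(1/4)$ --- is sound.

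The gap is in the Dirichlet-form step, and it is not merely ``delicate bookkeeping.'' The variance decomposition (i) is false as stated: the linear averaging map sending $F:\binom{V}{k}\to\mathbb R$ to its two-point projections is a map from a space of dimension $\binom{|V|}{k}$ into one of dimension $\binom{|V|}{2}$, so for $3\le k\le|V|/2$ it has a nontrivial kernel, and there exist mean-zero, non-constant $F$ whose every $2$-particle projection is identically zero. For such $F$ the right-hand side of (i) vanishes while $\mathrm{Var}(F)>0$. Nor can one fall back on a soft compactness argument (``$|V|<36$, so only boundedly many cases''): $f$ ranges over a product of simplices, and the ratio $t_{\mathrm{rel}}^{(k)}/t_{\mathrm{rel}}^{(2)}$ need not extend continuously to the boundary where irreducibility (Assumption~\ref{assumpf}.3) fails, since both relaxation times diverge there. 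A correct spectral-gap argument along your lines would have to descend one particle at a time ($k\to k-1\to\cdots\to 2$), mirroring the paper's iterated use of Lemma~\ref{L:ktok-1}, rather than projecting directly from $k$ particles to $2$; the direct projection simply discards too much information.
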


On the other hand, the argument for $|V|\ge 36$ is much more intricate and is split into two parts, the first being the following lemma which is of independent interest (and is stronger than needed for our main theorem, as it relates to the interchange process):

\begin{lemma}\label{L:kto4}
  There exists a constant $C>0$ such that for every hypergraph $G=(V,E)$
  with $|V|\ge 36$, every $f$ and every $k\in\{1,\ldots,|V|/2\}$ and $\eps>0$,
  \[
  T_{\mathrm{IP}(k,f,G)}(\eps)\le C\log
  (|V|/\varepsilon)T_{\mathrm{EX}(4,f,G)}(1/4).
  \]
\end{lemma}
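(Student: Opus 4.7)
The strategy is to adapt the \emph{chameleon process} of Morris and Oliveira to the hypergraph setting. This is an auxiliary process built on top of IP$(k,f,G)$ in which each particle additionally carries a colour from $\{\text{red},\text{pink},\text{white}\}$; it is designed so that its projection onto labelled positions is exactly IP$(k,f,G)$, while the expected amount of pink ``ink'' present at time $t$ controls, up to constants, the total variation distance between the time-$t$ distribution of IP$(k,f,G)$ and its uniform stationary measure. If one can show that the ink content grows by a fixed multiplicative factor in each time window of length of order $T_{\mathrm{EX}(4,f,G)}(1/4)$, then iterating $O(\log(|V|/\varepsilon))$ such windows and combining with the standard ink--total-variation comparison yields the claimed mixing-time bound.

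The first step is to specify the colour-update rule when an edge $e$ rings and $\sigma\in\cS_e$ is drawn from $f_e$. The labelled particles at vertices of $e$ move according to $\sigma$ as in the usual interchange process; the colours on each cycle of $\sigma$ are then updated separately, deterministically when possible (for instance, a red/white two-cycle becomes two pinks) and via auxiliary coin tosses otherwise, in such a way that the one-particle marginal of the colour process coincides with that of IP$(k,f,G)$. Assumption~\ref{assumpf} is decisive at this point: the class-function property lets the update rule depend only on cycle structure (rather than on the precise identity of $\sigma$), and the fixed-point bound ensures that any vertex is genuinely moved by $\sigma$ with positive probability on each ring, so that colour-mixing events actually occur often enough to produce ink.

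The core analytic step is the ink-growth estimate: conditional on the current colouring containing both red and white particles, after a further time of order $T_{\mathrm{EX}(4,f,G)}(1/4)$ the expected number of pink particles exceeds its initial value by a constant multiplicative factor. The factor of $4$ appears because the natural ink-producing events on a hypergraph involve two disjoint pairs of oppositely-coloured particles lying on a single ringing edge, and controlling the frequency of such configurations reduces, via a coupling argument in the spirit of Oliveira's, to estimating the mixing of four distinguishable exclusion particles on $(f,G)$. Iterating this multiplicative growth $O(\log(|V|/\varepsilon))$ times drives the ink fraction below $\varepsilon/|V|$, which in turn upper-bounds the total variation distance and gives the lemma.

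The main obstacle I anticipate is the rigorous verification of the ink-growth estimate. In the graph case treated by Oliveira, the only non-trivial permutations acting on an edge are single transpositions, so the update rule is essentially forced and the analysis reduces to a meeting-time estimate for two random walkers. In the hypergraph setting the cycle structure of a sampled $\sigma$ can be arbitrarily complex, so one must decompose the effect of a ring on the colouring into a deterministic ink-producing component together with a mean-zero fluctuation whose variance can be controlled uniformly over realised cycle structures; both the class-function property and the $1/5$ fixed-point bound are needed to make this decomposition preserve the required marginal while producing enough deterministic ink per ring. Once the decomposition is correctly set up, the remainder of the argument follows the Morris--Oliveira template with $T_{\mathrm{EX}(4,f,G)}(1/4)$ replacing the 2-particle (or random-walk) mixing time that appears in Theorem~\ref{t:Oliveira_IP}.
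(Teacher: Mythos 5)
Your overall strategy --- adapting the Morris--Oliveira chameleon process, iterating a multiplicative ink-growth estimate over $O(\log(|V|/\varepsilon))$ windows of length $O(T_{\mathrm{EX}(4,f,G)}(1/4))$, and then translating ink content into a total-variation bound --- is exactly the route the paper takes, so the high-level approach is correct. However, two conceptual misunderstandings would block you from carrying the construction through.

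First, the colouring structure you describe is not the right one. You say the chameleon process is IP$(k,f,G)$ with each of the $k$ labelled particles additionally carrying a colour from $\{\text{red},\text{pink},\text{white}\}$. In fact the chameleon process has a particle at \emph{every} vertex of $V$: there are $k-1$ \emph{black} particles that exactly shadow coordinates $1,\dots,k-1$ of the interchange process, and the remaining $|V|-k+1$ vertices carry red/pink/white labels, with one red and the rest white initially. The ink variable $\mathrm{ink}_t(v)=\indic{v\in R_t}+\tfrac12\indic{v\in P_t}$ then tracks the \emph{conditional} distribution of the $k$-th coordinate of IP given the other $k-1$; this is the content of parts 1, 2 and 4 of Lemma~\ref{L:existence}, and it is what makes Lemma~\ref{L:TVtoink} work. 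Without the black particles and without the red/pink/white colouring living on the non-black vertices, there is no identity of the form $\P[\mathbf{x}_t^{\mathrm{IP}}=\mathbf{b}]=\E[\mathrm{ink}^{\bf x}_t(b)\indic{\mathbf{z}_t^C=\mathbf{c}}]$ to prove, and the TV-to-ink comparison you appeal to does not exist.

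Second, your explanation of where the exponent $4$ comes from (``two disjoint pairs of oppositely-coloured particles'', i.e.\ a $2{:}2$ red/white split among four vertices) is not the mechanism the paper uses, and it would not give a clean involution. The paper constructs a sibling permutation $\tilde\sigma=\tilde\rhofn_{A(\sigma)}(\sigma)$ by right-multiplying $\sigma$ by a product of disjoint transpositions touching exactly four designated positions per cycle; pinkening is triggered precisely when those four positions carry a $3{:}1$ (or $1{:}3$) red/white split, so that exactly one red and one white get swapped and hence pinkened. The class-function hypothesis in Assumption~\ref{assumpf} is used not so that the colour update ``depends only on cycle structure,'' but so that $\tilde\sigma$ has the same law as $\sigma$ (Corollary~\ref{C:algorithm_random_input}); this is what guarantees the marginal of the chameleon process is an honest interchange process. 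The $4$-particle mixing time then enters quantitatively through Proposition~\ref{P:red_white_mix} (via Proposition~\ref{P:AldousFill}) when estimating the probability that a prescribed set of four vertices carries the required $3{:}1$ split after one phase of length $20\,T_{\mathrm{EX}(4,f,G)}(1/4)$, which feeds into the red-loss estimate of Lemma~\ref{L:lossofred}. Your proposed ``deterministic component plus mean-zero fluctuation'' decomposition is not how the paper controls this and would require a separate (and nontrivial) variance analysis that the involution-based construction avoids.
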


\cite{Oliveira2013a} proves his main result (bounding the mixing time of
the $k$-particle exclusion process by the mixing time of a random
walker) by first relating the mixing time of a $k$-particle
interchange process to that of a 2-particle interchange process.
Roughly speaking, this is possible due to the fact that any time an
edge of the graph under consideration rings, at most two particles
move under interchange, and so it is pairwise interactions that
determine the mixing rate. This contrasts with the exclusion process
on hypergraphs considered here, in which \emph{many} particles can
move at the same time. Nevertheless, a suitable adaptation of the
techniques appearing in \citep{Oliveira2013a} provides the proof of Lemma \ref{L:kto4}.

\begin{rmk}
	Lemma~\ref{L:kto4} only holds when $|V|$ is sufficiently large and $k\le |V|/2$. We cannot hope to remove these conditions and replace EX$(4,f,G)$ with EX$(2,f,G)$ in this statement, even for hypergraphs satisfying Assumption~\ref{assumpf}, as the following example illustrates. Let $G=(V,E)$ with $V=\{1,2,3\}$ and $E=\{V\}$, i.e. there is just a single edge which contains all three vertices in the hypergraph. Suppose that $f$ gives probability $1-\delta$ to the conjugacy class of 3-cycles, and probability $\delta$ to the class of transpositions. For $\delta$ sufficiently small this satisfies Assumption~\ref{assumpf}. The 2-particle interchange process cannot mix until a transposition is chosen (as half of the states cannot be reached before this time), whereas this event is not necessary for the 2-particle exclusion process to mix, and hence it is straightforward to see that as $\delta\to0$ we have $T_{\mathrm{IP}(2,f,G)}(1/4)/T_{\mathrm{EX}(2,f,G)}(1/4)\to\infty$.
\end{rmk}

The second part of the proof for $|V|\ge 36$ requires showing that
$T_{\mathrm{EX}(4,f,G)}$ and $T_{\mathrm{EX}(2,f,G)}$ are of the same
order: 

\begin{lemma}\label{L:4to2}
 There exists a constant $\lambda>0$ such that for any hypergraph $G$ with $|V|\ge 36$, and $f$,
  \[
  T_{\mathrm{EX}(4,f,G)}(1/4)\le \lambda T_{\mathrm{EX}(2,f,G)}(1/4).
  \]
\end{lemma}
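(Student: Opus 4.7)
My plan would be to prove Lemma~\ref{L:4to2} by a coupling argument between EX$(4,f,G)$ and a pair of coupled copies of EX$(2,f,G)$, exploiting the fact that with $|V|\ge 36$ there is enough room that four particles behave approximately like two independent pairs. Concretely, I would label the four particles $a,b,c,d$ and monitor the unordered pair-positions $\{a,b\}$ and $\{c,d\}$ separately, driving all processes by a shared family of Poisson hyperedge ringings with permutations sampled from $f$. Away from configurations in which a ringing hyperedge contains particles from both pairs, the two pair-processes evolve exactly as two copies of EX$(2,f,G)$, and the EX$(4,f,G)$ configuration is the union of their respective supports.

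The first step would be to show, using $|V|\ge36$ and Assumption~\ref{assumpf}~(2), that the expected amount of time in the interval $[0,\lambda T_{\mathrm{EX}(2,f,G)}(1/4)]$ during which a ringing edge simultaneously contains a particle from each pair can be made an arbitrarily small constant, provided $\lambda$ is chosen as a sufficiently large universal constant. Here the estimate $\sum_\sigma f_e(\sigma)\indic{|\sigma(v)|=1}\le 1/5$ ensures that each particle does move often enough under a ringing, while regularity of $G$ allows the probability of ``bad'' edge firings (those intersecting both pairs) to be bounded in terms of the fraction of vertices occupied by each pair, which is $O(1/|V|)$.

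The second step would be to translate this quantitative ``decoupling'' into a total variation bound on EX$(4,f,G)$. Since the stationary distribution of EX$(2,f,G)$ is uniform on 2-subsets of $V$, and since the union of two independent uniform disjoint 2-subsets of $V$ is uniform on 4-subsets, once each pair-process has reached TV-distance $1/4$ of its marginal stationary distribution and the probability of the coupled pairs having ever shared an edge firing is small, the EX$(4,f,G)$ distribution is within TV-distance $1/4$ of uniform on 4-subsets. A symmetry argument, available because $f_e$ is a class function (Assumption~\ref{assumpf}~(1)), is needed to pass from the marginal mixing of each pair to the joint statement.

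The main obstacle will be controlling the dependence between the two pair-processes under the shared-ringing coupling: they are not independent, since they share permutations whenever an edge contains particles from both pairs, and the position of one pair restricts the state space of the other. Handling this rigorously will likely require either a careful reflection-style coupling or invocation of the chameleon process framework used for Lemma~\ref{L:kto4}, with the bound $|V|\ge36$ entering quantitatively to ensure that collisions between the pairs contribute only negligibly to the TV distance over the time horizon $\lambda T_{\mathrm{EX}(2,f,G)}(1/4)$.
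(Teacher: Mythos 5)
Your central claim --- that for $|V|\ge 36$ and a sufficiently large universal $\lambda$, the expected time in $[0,\lambda T_{\mathrm{EX}(2,f,G)}(1/4)]$ during which a ringing edge contains a particle from each pair can be made arbitrarily small --- is false, and this breaks the whole argument. Consider the cycle graph (edges of size two, $f_e$ the transposition): the two-particle exclusion mixing time is of order $|V|^2$, while the rate at which some edge contains one particle from each pair is of order $1/|V|$ near equilibrium, so the expected number of ``bad'' edge firings over the mixing-time horizon is of order $|V|$, which diverges rather than vanishing. Increasing $\lambda$ only makes this worse. The bound $|V|\ge36$ in the paper is a small technical threshold used elsewhere; it does not imply any decoupling of pairs of particles. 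Indeed, for most graphs two walkers \emph{will} meet within a constant multiple of $T_{\mathrm{EX}(2,f,G)}(1/4)$, and such hypergraphs are exactly the ``easy'' ones in the paper's terminology, where the crux is to exploit the meeting rather than avoid it.

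The paper's proof is structured around the dichotomy between easy and non-easy hypergraphs, defined via the meeting time $M^{\mathrm{RW}}$ of two independent walkers. For easy hypergraphs it proves a direct coupling lemma (Lemma~\ref{L:ktok-1}) comparing $\mathrm{EX}(k)$ to $\mathrm{EX}(k-1)$: it couples two $k$-particle exclusion processes by first coupling $k-1$ of the particles via a maximal coupling and then using the meeting time of the two remaining ``special'' particles to match them as well, handling several cases depending on the size of the edge on which they meet. Applying this twice (from $k=4$ to $k=3$ to $k=2$) gives the result. For non-easy hypergraphs, where two walkers rarely meet within $O(T_{\mathrm{EX}(2,f,G)}(1/4))$, it compares $\mathrm{EX}(4)$ to four \emph{independent} random walkers (not two pairs), using Lemmas~\ref{L:averagemeet}, \ref{L:MRWxbound}, and Proposition~\ref{P:couplingIPwithRW}. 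Something in the spirit of your independence heuristic does appear there, but it applies only in the non-easy regime and requires the quantitative meeting-time estimates; the easy regime needs a fundamentally different argument that you have not supplied. You also assert that the union of two independent uniform disjoint 2-subsets is uniform on 4-subsets; independent 2-subsets need not be disjoint, so this requires conditioning on disjointness and some extra care, though that is a minor gap compared to the decoupling issue.
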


We now demonstrate that Theorem \ref{T:mainexcl} follows simply from Lemmas \ref{L:smallV}, \ref{L:kto4} and \ref{L:4to2}.

\begin{proof}[Proof of Theorem \ref{T:mainexcl}]
 The contraction principle (see \cite{AldousFill2014}) gives
  \[
   T_{\mathrm{EX}(k,f,G)}(\eps)\le T_{\mathrm{IP}(k,f,G)}(\eps)\,,
  \]and so provided $k\le |V|/2$, we have the result for $|V|\ge 36$ by Lemmas \ref{L:kto4} and \ref{L:4to2} and for $|V|<36$ by Lemma \ref{L:smallV}. However, note
  that switching the roles of occupied and unoccupied vertices in EX$(k,f,G)$ yields the process EX\mbox{$(|V|-k,f,G)$}. It follows that
  $$T_{\mathrm{EX}(k,f,G)}(\eps)=T_{\mathrm{EX}(|V|-k,f,G)}(\eps)\,,$$
and so the proof of Theorem \ref{T:mainexcl} is complete.
\end{proof}

We finish this section with a brief overview of the rest of the
paper. In Section \ref{S:processes} we define formally the processes
considered in this paper and present some preliminary results. In addition, we
demonstrate that the negative correlation property, which is
fundamental to the result in \citep{Oliveira2013a}, fails to hold for
the hypergraph setting. In Section~\ref{S:existence} we prove Lemma~\ref{L:kto4} subject to the existence of a process with certain key properties that relate it to an interchange process: see Lemma~\ref{L:existence} for the precise statement. This process is constructed in Section~\ref{S:cham} and we show it has the desired properties in Section~\ref{S:chamhasprops}. Proving Lemma~\ref{L:existence} is the most challenging (and technical) part of this paper.

In Section~\ref{S:4to1} we prove Lemma~\ref{L:4to2} by first
characterizing every hypergraph as one of two types depending on how long
it takes any two of four independent particles to meet.

 We use some of the ideas developed in Section~\ref{S:4to1} to prove Lemma~\ref{L:smallV} in Section~\ref{S:smallV}.
\opt{noarxivVersion}
{
A few of the more technical proofs required are included in two
appendices which can be found in the extended arXiv-version of this
paper \citep{Connor-Pymar-2016}.
}
\opt{arxivVersion}
{
\hspace{-3.5mm}A few of the more technical proofs required are included in two
appendices.
}

\section{Preliminaries}\label{S:processes}

\subsection{Random walks, exclusion and
  interchange processes}\label{S:processconstr}
We formally define the main processes studied in this paper,
RW($f,G$), RW($k,f,G$), EX($k,f,G$) and IP($k,f,G$), by explicitly
stating their generators. In the next section we shall present a
\emph{graphical construction} of these processes, similar to that of
\cite{Liggett1999} for the standard interchange and exclusion
processes. This graphical construction will allow us to simultaneously
define the processes on the same probability space, and thus directly
compare them.

Recall $\cS_e$ as the group of permutations of elements in $e$. Our
processes of interest evolve by the action of permutations from these
groups. However, it will often be convenient to consider permutations as
acting on $V$ and we can easily do this by extending a permutation
$\sigma_e\in\cS_e$ to a permutation in $\cS_V$ by setting
$\sigma_e(v)=v$ for all $v\notin e$. We can also consider such
permutations as acting on a subset of $V$ or on vectors with elements
being distinct members of $V$. To do this we can define, for a set
$A\subseteq V$, $\sigma_e(A):=\{\sigma_e(a):\,a\in A\}$, and for a
vector ${\bf x}$ of $k$ distinct elements of $V$ we define
$\sigma_e({\bf x}):=(\sigma_e({\bf x}(i)))_{i=1}^k$.

{\bf Set notation:} For $k\in\mathbb{N}$ we
define
\[
\binom{V}{k}:=\{A\subseteq V:\,|A|=k\},
\]and for a set $A\subseteq V$ we write
\[
(A)_k:=\{{\bf a}=({\bf a}(1),\ldots,{\bf a}(k))\in A^k:\,{\bf
  a}(i)\neq{\bf a}(j)\,\forall i\neq j\}.
\]

{\bf Generators:} We now explicitly state the generators of the processes. For a
hypergraph $G$ and a suitable set of functions $f$, the simple random
walk on $G$, RW($f,G$), is the continuous-time Markov chain with state
space $V$ and generator
\[
U^{\text{RW}}h(u)=\sum_{e\in
  E}\sum_{\sigma\in\mathcal{S}_e}f_e(\sigma)(h(\sigma(u))-h(u))
\]
for all $u\in V$ and $h:V\to\mathbb R$.

We denote by RW$(k,f,G)$ the product of $k$ independent random walkers
on $G$. This process is the continuous-time Markov chain with state
space $V^k$ and generator
\[
U^{\text{RW}(k)}h({\bf u})=\sum_{e\in
  E}\sum_{i=1}^k\sum_{\sigma\in\mathcal{S}_e}f_e(\sigma)(h({\bf u}^i_{\sigma({\bf
      u}(i))})-h({\bf u})),
\]
for all ${\bf u}\in V^k$ and $h:V^k\to \mathbb R$, where \[ {\bf
  u}^i_v(j)=
\begin{cases}
  {\bf u}(j)&j\neq i,\\
  v&j=i.
\end{cases}
\]
The $k$-particle exclusion process EX($k,f,G$), is the continuous-time
Markov chain with state space $\binom{V}{k}$ and generator
\[
U^{\text{EX}}h(A)=\sum_{e\in
  E}\sum_{\sigma\in\mathcal{S}_e}f_e(\sigma)(h(\sigma(A))-h(A)),\]for
all $A\in\binom{V}{k}$ and $h:\binom{V}{k}\to\mathbb{R}$.

The $k$-particle interchange process IP($k,f,G$), is the
continuous-time Markov chain with state space $(V)_k$ and generator
\[
U^{\text{IP}}h({\bf x})=\sum_{e\in
  E}\sum_{\sigma\in\mathcal{S}_e}f_e(\sigma)(h(\sigma({\bf x}))-h({\bf x})) ,\] for all ${\bf x}\in
(V)_k$ and $h:(V)_k\to\mathbb{R}$.

\subsection{Graphical construction}\label{S:graphical}
We first construct an independent sequence of $E$-valued random
variables $\{e_n\}_{n\in\mathbb{N}}$ such that each $e_n$ is
identically distributed with $\P\bra{e_n=e}=1/|E|.$ Given the sequence
$\{e_n\}_{n\in\mathbb{N}}$, let $\{\sigma_n\}_{n\in\mathbb{N}}$ be a
sequence of permutations with $\sigma_n\in \cS_{e_n}$ independently
chosen and satisfying for each $n\in\mathbb{N}$, and $e\in E$,
$\P\bra{\sigma_n=\sigma|\,e_n=e}=f_{e}(\sigma)$. Now that we have the
sequence of edges that ring and the permutations to apply, it remains
to determine the update times of the processes.

Let $\Lambda$ be a Poisson process of rate $|E|$ and for $0<s<t$
denote by $\Lambda[s,t]$ the number of points of $\Lambda$ in
$[s,t]$. For every $0<s<t$, we define a random permutation
$I_{[s,t]}:V\to V$ associated with the time interval $[s,t]$ to be the
composition of the permutations performed during this time; that is,
\[
I_{[s,t]}=\sigma_{e_{\Lambda[0,t]}}\circ\sigma_{e_{\Lambda[0,t]-1}}\circ\cdots\circ\sigma_{e_{\Lambda[0,s)+1}}
.\] We set $I_t:=I_{[0,t]}$ for each $t>0$, and $I_{(t,t]}$ to be the
identity. Note (cf Proposition 3.2 of \cite{Oliveira2013a}) that 
\begin{equation}\label{eq:law_I_inverse}
\cL[I_{(s,t]}] = \cL[I^{-1}_{(s,t]}]\,,
\end{equation}
where we write $\cL$ for the law of a process.

We can lift the functions $I_{[s,t]}$ to functions on
$\binom{V}{k}$ and $(V)_k$ in the following way: for
$A\in\binom{V}{k}$,
\[
I_{[s,t]}(A)=\{I_{[s,t]}(a):\,a\in A\},
\]
and for ${\bf x}\in(V)_k$,
\[
I_{[s,t]}({\bf x})=(I_{[s,t]}({\bf x}(1)),\ldots,I_{[s,t]}({\bf
	x}(k))).
\]

The following proposition is fundamental: its proof follows by
inspection.
\begin{prop}\label{prop:graph}
  Fix $s>0$. Then:
  \begin{enumerate}
  \item For each $u\in V$, the process $\{I_{[s,s+t]}(u)\}_{t\ge0}$ is
    a realisation of $\mathrm{RW}(f,G)$ initialised at $u$ at time
    $s$. We shall often write this process simply as
    $(u_t^\mathrm{RW})_{t\ge s}$.
  \item For each $A\in\binom{V}{k}$, the process
    $\{I_{[s,s+t]}(A)\}_{t\ge0}$ is a realisation of
    $\mathrm{EX}(k,f,G)$ initialised at $A$ at time $s$. We shall
    often write this process simply as $(A_t^\mathrm{EX})_{t\ge s}$.
  \item For each ${\bf x}\in(V)_k$, the process $\{I_{[s,s+t]}({\bf
      x})\}_{t\ge0}$ is a realisation of $\mathrm{IP}(k,f,G)$
    initialised at ${\bf x}$ at time $s$. We shall often write this
    process simply as $({\bf x}_t^\mathrm{IP})_{t\ge s}$.
  \end{enumerate}
\end{prop}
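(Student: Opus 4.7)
The plan is to verify directly that each of the three processes defined through the graphical construction has the generator prescribed in Section~\ref{S:processconstr}. By the time-homogeneity of the Poisson process $\Lambda$ and of the i.i.d.\ sequences $\{e_n\}_{n\in\mathbb{N}}$ and $\{\sigma_n\}_{n\in\mathbb{N}}$, it suffices to treat the case $s=0$; the general case follows by the strong Markov property of $\Lambda$ together with the memoryless property of the exponential inter-arrival times.

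First I would check the Markov property. Since the arrivals of $\Lambda$ in $(t,\infty)$ and the associated marks $(e_n,\sigma_n)$ for $n>\Lambda[0,t]$ are independent of everything up to time $t$, the process $\{I_{[0,t+r]}({\bf x})\}_{r\ge 0}$ conditioned on $\cF_t$ depends on the past only through the current value $I_t({\bf x})$. This applies verbatim (with the obvious modifications) to $I_t(u)$ and $I_t(A)$, so each of the three processes is Markovian.

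Next, to identify the generator for IP I would examine the jump rates on a small interval $[0,h]$. The probability that $\Lambda$ has exactly one arrival in $[0,h]$ is $|E|h + o(h)$, the probability of two or more arrivals is $O(h^2)$, and conditionally on a single arrival the edge equals $e$ with probability $1/|E|$ while, given this, the permutation equals $\sigma$ with probability $f_e(\sigma)$. Hence for ${\bf x}\ne{\bf y}$ in $(V)_k$,
\[
\bbp\bra{I_{[0,h]}({\bf x}) = {\bf y}} \;=\; h\sum_{e\in E}\sum_{\substack{\sigma\in\cS_e:\\ \sigma({\bf x})={\bf y}}} f_e(\sigma) \;+\; o(h),
\]
and differentiating at $h=0$ recovers precisely $U^{\text{IP}}$. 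For the random walk, applying the same calculation to $u\in V$ in place of ${\bf x}$ and using that $\sigma_e(u)=u$ whenever $u\notin e$ produces $U^{\text{RW}}$. For EX, running the argument on $A\in\binom{V}{k}$ and observing that several distinct $\sigma\in\cS_e$ may map $A$ to the same $B$ (which is exactly why the inner sum in $U^{\text{EX}}$ ranges over all such $\sigma$) yields $U^{\text{EX}}$.

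I do not anticipate a genuine obstacle here — the authors themselves call the proof \emph{by inspection}. The only bookkeeping subtlety is that each $\sigma_n$ nominally lives in $\cS_{e_n}$ but acts on $V$, $(V)_k$ and $\binom{V}{k}$ via the lifting conventions in Section~\ref{S:processconstr}; that the singleton, tuple and set versions all arise from a single underlying Poisson construction is precisely what will allow the simultaneous coupling of RW, EX and IP in later sections, so this verification is worth doing carefully even though it is not hard.
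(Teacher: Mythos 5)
Your verification is correct and is precisely the routine generator-matching calculation that the paper compresses into the phrase ``follows by inspection.'' The reduction to $s=0$, the Markov-property check via independence of Poisson increments, and the small-$h$ jump-rate computation (with the observations about $\sigma_e(u)=u$ off the edge for RW and the many-to-one collapse of $\sigma$'s for EX) together recover the generators $U^{\mathrm{RW}}$, $U^{\mathrm{EX}}$, $U^{\mathrm{IP}}$ exactly as defined in Section~\ref{S:processconstr}.
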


\subsection{Total variation and mixing times}
There are several equivalent definitions of total variation that we
shall make use of in this paper. Suppose $\mu$ and $\nu$ are two
probability measures on the same finite set $\Omega$. Then the
\emph{total variation distance} between these measures is defined as 
\begin{align}\label{eq:tvdef}
\|\mu-\nu\|\tv&:=\max_{A\subset \Omega}(\mu(A)-\nu(A))\\&=\sup_{f:\Omega\to[0,1]}\int fd\mu-\int fd\nu.\label{eq:tvint}
\end{align}
We shall also make extensive use of the following equivalent definition, which relates the total variation distance to couplings of $\mu$ and $\nu$:
\begin{align}\label{eq:tvcoupling}
\|\mu-\nu\|\tv=\inf_{(X,Y)}\P\bra{X\neq Y},
\end{align}
where the infimum is over all couplings $(X,Y)$ of random variables with $X\sim\mu$ and $Y\sim\nu$.
We recall a simple result bounding the total variation of product chains (see e.g. pg 59 of \citet{Levin2008}): for $n\in\mathbb{N}$ and $1\le i\le n$, let $\mu_i$ and $\nu_i$ be measures on a finite space $\Omega_i$ and define measures $\mu$ and $\nu$ on $\prod_{i=1}^n\Omega_i$ by $\mu:=\prod_{i=1}^n\mu_i$ and $\nu:=\prod_{i-1}^n\nu_i$. Then 
\begin{align}\label{eq:prodtv}
\|\mu-\nu\|\tv\le\sum_{i=1}^n\|\mu_i-\nu_i\|\tv.
\end{align}
Recall equation \eqref{eq:tmixdef} as the definition of the mixing time of a continuous-time Markov process. We will require several general mixing-time bounds throughout this work, which we present here.

\begin{prop}[\citet{Levin2008}]\label{P:1to1eps} Let $X$ be a Markov process on a finite state space. Then for every $\eps_1,\eps_2\in(0,1/2)$,
  \[
  T_X(\eps_2)\le\left\lceil\frac{\log\eps_2}{\log(2\eps_1)}\right\rceil T_X(\eps_1).
  \]
\end{prop}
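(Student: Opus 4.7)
The plan is to establish the result via the standard submultiplicativity argument for total variation distance, which is the tool behind essentially all ``boosting'' results of this type in the mixing time literature.

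First I would introduce the two standard distance functions
\[
d(t) := \max_{x} \|q_t(x,\cdot) - \pi\|\tv, \qquad \bar d(t) := \max_{x,y} \|q_t(x,\cdot) - q_t(y,\cdot)\|\tv,
\]
and record two classical facts: (i) $d(t) \le \bar d(t) \le 2 d(t)$, where the second inequality follows directly from the triangle inequality after inserting $\pi$, and (ii) $\bar d$ is submultiplicative, meaning $\bar d(s+t) \le \bar d(s)\,\bar d(t)$. The submultiplicativity is the essential analytic input and is proved by writing $q_{s+t}(x,\cdot) - q_{s+t}(y,\cdot)$ as a mixture indexed by pairs of states reached after time $s$ and then applying~\eqref{eq:tvcoupling} to each coordinate; this is a well-known calculation (see e.g.\ Lemma 4.12 of \citet{Levin2008}) so I would simply cite it rather than reprove it.

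Next I would set $t_1 := T_X(\eps_1)$, so by definition $d(t_1) \le \eps_1$, and hence $\bar d(t_1) \le 2\eps_1 < 1$ since $\eps_1 < 1/2$. Iterating submultiplicativity gives $\bar d(k t_1) \le \bar d(t_1)^k \le (2\eps_1)^k$ for every positive integer $k$, and therefore $d(k t_1) \le (2\eps_1)^k$.

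To conclude, I would choose $k$ to be the smallest integer for which $(2\eps_1)^k \le \eps_2$; taking logarithms and using that $\log(2\eps_1) < 0$, this is exactly $k = \lceil \log \eps_2 / \log(2\eps_1) \rceil$. With this $k$ we have $d(k t_1) \le \eps_2$, hence $T_X(\eps_2) \le k t_1$, which is the claimed bound. There is no real obstacle here; the only subtlety is keeping careful track of the factor of $2$ in the comparison between $d$ and $\bar d$, which is precisely what produces the $\log(2\eps_1)$ (rather than $\log \eps_1$) in the denominator.
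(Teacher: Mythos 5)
Your proposal is correct and is precisely the standard submultiplicativity argument from Levin, Peres and Wilmer, which is exactly what the paper cites for this proposition (it does not reprove it). There is nothing to compare beyond that: the definitions of $d$ and $\bar d$, the two classical facts, and the choice of $k = \lceil \log\eps_2 / \log(2\eps_1)\rceil$ are all as in the cited source, and your tracking of the factor of $2$ explaining the $\log(2\eps_1)$ denominator is the right thing to emphasise.
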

\begin{prop}\label{P:2and4to1indep}
For any $m,n\in\mathbb{N}$,
\[
T_{\mathrm{RW}(2^m,f,G)}(2^{-n})\le (n+m)T_{\mathrm{RW}(f,G)}(1/4).
\]

\end{prop}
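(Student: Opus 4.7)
The plan is to combine Proposition \ref{P:1to1eps} (which drives the single-walker mixing error down exponentially at unit cost per doubling) with the product-chain total-variation bound \eqref{eq:prodtv} (which lifts a single-walker guarantee to $2^m$ independent copies). By construction RW$(k,f,G)$ is the product of $k$ independent copies of RW$(f,G)$, and under Assumption~\ref{assumpf} each individual walker is ergodic and reversible with uniform stationary distribution $\pi$ on $V$; hence the joint stationary distribution is $\pi^{\otimes k}$.

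The first step is to observe that, starting from any ${\bf u}\in V^k$, the time-$t$ law of RW$(k,f,G)$ factorises as $\prod_{i=1}^k q_t({\bf u}(i),\cdot)$, so \eqref{eq:prodtv} gives
\[
\Bigl\| \textstyle\prod_{i=1}^k q_t({\bf u}(i),\cdot) - \pi^{\otimes k} \Bigr\|\tv \le k \cdot \max_{u\in V}\|q_t(u,\cdot)-\pi\|\tv.
\]
Taking $k=2^m$ and choosing $t = T_{\mathrm{RW}(f,G)}(2^{-(n+m)})$ makes the right-hand side at most $2^m\cdot 2^{-(n+m)} = 2^{-n}$, which yields
\[
T_{\mathrm{RW}(2^m,f,G)}(2^{-n}) \le T_{\mathrm{RW}(f,G)}(2^{-(n+m)}).
\]

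The second step is to apply Proposition \ref{P:1to1eps} to the single-walker chain with $\eps_1=1/4$ and $\eps_2=2^{-(n+m)}$. Since $\log(2\eps_1)=\log(1/2)=-\log 2$ and $\log\eps_2=-(n+m)\log 2$, the ceiling evaluates exactly to $n+m$, giving
\[
T_{\mathrm{RW}(f,G)}(2^{-(n+m)}) \le (n+m)\, T_{\mathrm{RW}(f,G)}(1/4).
\]
Chaining the two bounds produces the claimed inequality.

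I do not expect any real obstacle: both ingredients are cited in the preceding pages, and the computation is purely arithmetic. The only minor hygiene point is that Proposition \ref{P:1to1eps} requires $\eps_2\in(0,1/2)$, which holds as soon as $n+m\ge 2$; the remaining small cases are either vacuous (any TV distance is at most $1$, so the statement holds trivially when $n=0$) or handled by monotonicity of $T_X(\cdot)$ in its argument.
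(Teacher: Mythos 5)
Your proof is correct and takes exactly the approach the paper intends: its proof is the one-line remark "This follows by combining Proposition \ref{P:1to1eps} with \eqref{eq:prodtv}," which is precisely the chaining you carry out (product bound giving $T_{\mathrm{RW}(2^m,f,G)}(2^{-n})\le T_{\mathrm{RW}(f,G)}(2^{-(n+m)})$, then the submultiplicativity-type estimate giving the factor $n+m$). Your observation on the boundary case is fine but not needed if $\mathbb{N}$ is taken to start at $1$, since then $n+m\ge 2$ automatically puts $\eps_2=2^{-(n+m)}$ in $(0,1/2)$.
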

\begin{proof}
This follows by combining Proposition \ref{P:1to1eps} with \eqref{eq:prodtv}.
\end{proof}
\begin{prop}[\citet{AldousFill2014}]\label{P:AldousFill}
Let $X$ be a Markov process on a finite state space $\Omega$ with symmetric transition rates. Then the equilibrium distribution is uniform over $\Omega$ and for all $0<\eps<1/2$ and $t\ge 2 T_X(\eps)$,
\[
\P\bra{X_t=\omega_2|\,X_0=\omega_1}\ge \frac{(1-2\eps)^2}{|\Omega|},
\]
for all $\omega_1,\omega_2\in \Omega$.
\end{prop}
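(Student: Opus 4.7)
My plan is a standard reversibility argument. First, symmetric rates $q(x,y)=q(y,x)$ make the detailed balance equations $\pi(x)q(x,y)=\pi(y)q(y,x)$ hold automatically for the uniform distribution $\pi$ on $\Omega$, which gives the equilibrium claim. The symmetry is preserved by the semigroup, so $q_t(x,y)=q_t(y,x)$ for every $t\ge0$.

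For the lower bound, I would set $s:=t/2$, which satisfies $s\ge T_X(\eps)$ by hypothesis, so $\|q_s(\omega_i,\cdot)-\pi\|\tv\le\eps$ for $i=1,2$. Then by Chapman--Kolmogorov together with the symmetry above,
\[
q_t(\omega_1,\omega_2)=\sum_{z\in\Omega}q_s(\omega_1,z)\,q_s(z,\omega_2)=\sum_{z\in\Omega}q_s(\omega_1,z)\,q_s(\omega_2,z).
\]
The key observation is the elementary pointwise bound $ab\ge\min(a,b)^2$ for $a,b\ge0$: it converts a product of two unrelated distributions into a single square, which is exactly what is needed to deploy Cauchy--Schwarz. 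Setting $m_z:=\min\{q_s(\omega_1,z),q_s(\omega_2,z)\}$, this yields
\[
q_t(\omega_1,\omega_2)\ge\sum_z m_z^2\ge\frac{1}{|\Omega|}\left(\sum_z m_z\right)^2.
\]

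Finally I would use the identity $\sum_z\min(\mu(z),\nu(z))=1-\|\mu-\nu\|\tv$ together with the triangle inequality in total variation: since both $q_s(\omega_i,\cdot)$ lie within $\eps$ of $\pi$, one has $\|q_s(\omega_1,\cdot)-q_s(\omega_2,\cdot)\|\tv\le2\eps$, whence $\sum_z m_z\ge 1-2\eps$. Inserting this into the previous display gives the claimed bound $(1-2\eps)^2/|\Omega|$. There is no substantive obstacle; the only step worth flagging is the choice of the pointwise inequality $ab\ge\min(a,b)^2$, which is the mechanism by which the two mixing estimates (one for each endpoint) combine to produce the $1/|\Omega|$ factor.
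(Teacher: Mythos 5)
Your proof is correct, and the steps fit together cleanly: symmetry of the generator passes to the semigroup, Chapman--Kolmogorov at the midpoint $s=t/2$, the pointwise bound $ab\ge\min(a,b)^2$, Cauchy--Schwarz against the constant function, the identity $\sum_z\min(\mu(z),\nu(z))=1-\|\mu-\nu\|\tv$, and the triangle inequality through $\pi$. The paper does not prove this proposition (it is cited directly to Aldous and Fill), but your argument is the standard one underlying that reference, so there is nothing to reconcile.
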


\subsection{Failure of negative correlation}\label{s:neg_cor}
We conclude this preliminary section with a quick example to
demonstrate that the exclusion process on a hypergraph does not
enjoy the negative correlation property satisfied by the exclusion
process on a graph. We first recall the version of the negative
correlation property of the exclusion process on a graph to which we
refer, and whose proof may be found in \citep{Liggett1985}. Let $B\subset V$ and let $(A_t^\mathrm{EX})_{t\ge0}$ be a 2-particle exclusion process on a graph $G=(V,E)$ with $A=\{u,v\}$. Suppose $(u_t^{\mathrm{RW}})_{t\ge0}$ and
$(v_t^{\mathrm{RW}})_{t\ge0}$ are two independent realisations of
RW$(1,f,G)$, started from $u$ and $v$ respectively. Then for every $t\ge0$,
\[
\P\bra{A_t^\mathrm{EX}\subseteq B}\le\P\bra{u_t^\mathrm{RW}\in B}\P\bra{v_t^\mathrm{RW}\in B}.
\]

Now suppose $G=(V,E)$ is the hypergraph with $V=\{1,2,3,4\}$ and $E=\{V\}$ (i.e. there
is only one edge), and that $f$ is concentrated uniformly on the six
possible 4-cycles. Let $(A_t^{\mathrm{EX}})_{t\ge0}$ be a realisation of
EX$(2,f,G)$, with $A= \{u,v\}=\{1,2\}$, and let $B=\{3,4\}$. We claim that there
exist values of $t$ such that
\begin{equation}\label{e:neg_cor}
 \P\bra{u_t^\mathrm{RW}\in
  B,v_t^\mathrm{RW}\in B}<\P\bra{A_t^\mathrm{EX}= B}.
\end{equation}
Indeed,
since the event $\{u_t^\mathrm{RW}\in B\}$ is less likely than seeing at least one incident
in a unit-rate Poisson process by time $t$, we have
\begin{align*}
  \P\bra{u_t^\mathrm{RW}\in B,v_t^\mathrm{RW}\in B}=
  \P\bra{u_t^\mathrm{RW}\in B} \P\bra{v_t^\mathrm{RW}\in
    B}\le
  (1-e^{-t})^2.
\end{align*}
On the other hand, the event $\{A_t^\mathrm{EX}= B\}$ is at
least as likely as the edge ringing exactly once by time $t$, with the
chosen permutation satisfying $\sigma(\{1,2\}) = \{3,4\}$. That is,
\begin{align*}
  \P\bra{A_t^\mathrm{EX}= B} &\ge
  \frac13te^{-t}.
\end{align*}
Inequality \eqref{e:neg_cor} is therefore satisfied for any $t<0.33$.

\bigskip

\section{From $k$-particle interchange to 4-particle exclusion}\label{S:existence}

In this section we shall prove Lemma \ref{L:kto4}. Given a hypergraph with vertex set $V$ and a $(k-1)$-tuple $\mathbf{z}\in(V)_{k-1}$, let \[\mathbf{O}(\mathbf{z}):=\{\mathbf{z}(1),\ldots,\mathbf{z}(k-1)\}\] be the (unordered) set of coordinates of $\mathbf{z}$ and define a space
\[
\Omega_k(V):=\{(\mathbf{z},R,P,W):\,\mathbf{z}\in (V)_{k-1}, \text{ and sets }\mathbf{O}(\mathbf{z}),\,R,\,P,\,W\text{ partition }V\}.
\]

As we shall see, most of the work required to prove Lemma \ref{L:kto4} is to show the existence of a certain Markov process having some key properties, which we outline in the Lemma~\ref{L:existence} below. As we shall see in the sequel, this Markov process is very similar to the chameleon process used in \cite{Oliveira2013a} and it provides a way of tracking how mixed the $k$th particle is in a $k$-particle interchange process. The $k$th particle is replaced by three sets of coloured particles, $R_t$ (red particles), $P_t$ (pink particles) and $W_t$ (white particles), with the colours informing the conditional distribution of the $k$th particle in the interchange process. A process $(\mathrm{ink}^{\bf
		x}_t(b))_{t\ge0}$ is defined for each vertex $b\in V$, which records the amount of \emph{redness} at vertex $b$ (equal to 1 if a red particle is at vertex $b$ and 1/2 if a pink particle is at vertex $b$). We shall also define an event $\mathrm{Fill}^{\bf
  	x}$ as the event that all vertices unoccupied by the first $k-1$ particles in the interchange process are eventually each occupied by a red particle in the chameleon process. 

\begin{lemma}\label{L:existence}
There exist constants $c_1,c_2$ and $\kappa_1$ such that for every regular hypergraph $G=(V,E)$ with $|V|\ge36$, every $f$, every $k\in\{2,\ldots,|V|/2\}$, every $\mathbf{x}=(\mathbf{z},x)\in(V)_k$, and every realisation $(\mathbf{x}_t^\mathrm{IP})_{t\ge0}$ of IP$(k,f,G)$ started from state $\mathbf{x}$, there exists a continuous-time Markov process $(M_t)_{t\ge0}:=(\mathbf{z}_t^C,R_t,P_t,W_t)_{t\ge0}$ with state-space $\Omega_k(V)$ defined on the same probability space as $(\mathbf{x}_t^\mathrm{IP})_{t\ge0}$ satisfying:
\begin{enumerate}
\item\label{item:z_same_path} $(\mathbf{z}_t^\mathrm{IP})_{t\ge0}=(\mathbf{z}^C_t)_{t\ge0}$ almost surely;
\item\label{item:ink} for every $t\ge0$ and $\mathbf{b}=(\mathbf{c},b)\in(V)_k$,
\[
\P\bra{\mathbf{x}_t^\mathrm{IP}=\mathbf{b}}=\E\bra{\mathrm{ink}^{\bf
		x}_t(b)\indic{\mathbf{z}_t^C=\mathbf{c}}},
\]
where $\mathrm{ink}^{\bf
	x}_t(b):=\indic{b\in R_t}+\frac1{2}\indic{b\in P_t}$;
\item\label{item:exp_ink} for every $t\ge0$ and $j\in\mathbb{N}$, \[\E\left[1-\frac{\mathrm{ink}^{\bf
		x}_t}{|V|-k+1}\Big|\,\mathrm{Fill}^{\bf
	x}\right]\le
  c_1\sqrt{|V|}e^{-c_2j}+\exp\left\{j-\frac{t}{\kappa_1\,T_{\mathrm{EX}(4,f,G)}(1/4)}\right\}\]
  where $\mathrm{ink}^{\bf
  	x}_t:=\sum_{b\in V}\mathrm{ink}^{\bf
  	x}_t(b)$ and $\mathrm{Fill}^{\bf
  	x}:=\left\{\lim_{t\to\infty}\mathrm{ink}^{\bf
  	x}_t=|V|-k+1\right \}$;
  \item\label{item:fill_independent}  for every $t\ge0$ and ${\bf c}\in(V)_{k-1},$ \[
  \P\bra{\{{\bf z}_t^C={\bf c}\}\cap\mathrm{Fill}^{\bf
  		x}}=\frac{\P\bra{{\bf z}_t^C={\bf c}}}{|V|-k+1}.
  \]
\end{enumerate}
\end{lemma}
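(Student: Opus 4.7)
The plan is to adapt the \emph{chameleon process} of \cite{Morris2006a}, as extended to graphs by \cite{Oliveira2013a}, to the hypergraph setting. Using the graphical construction of Section~\ref{S:graphical}, I would build $(M_t)$ on the same probability space as $(\mathbf{x}^{\mathrm{IP}}_t)$ so that the blocker coordinates $\mathbf{z}^C_t$ are driven by the same edge/permutation sequence $\{(e_n,\sigma_n)\}$ that drives $\mathbf{x}^{\mathrm{IP}}$; this makes property (1) automatic. The three-colour partition $(R_t,P_t,W_t)$ starts from $R_0=\{x\}$, $P_0=\varnothing$, $W_0=V\setminus(\mathbf{O}(\mathbf{z})\cup\{x\})$, and every time the Poisson clock rings with edge $e$ and permutation $\sigma$, all colours sitting on non-blocker vertices of $e$ are permuted by $\sigma$ alongside the movement of the blockers. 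At a further deterministic sequence of \emph{pinkening} times, spaced at intervals of length $\kappa_1 T_{\mathrm{EX}(4,f,G)}(1/4)$, the process selects pairs of current red and current white vertices and converts each pair to pink (so that each pair carries one half-unit of red ink and one half-unit of white ink); at matched \emph{depinkening} times, each pink pair is resolved back into one red and one white via an independent fair coin flip.

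Property (2) should follow by induction on the jumps of the process. A permutation move preserves the identity because it applies the same $\sigma$ both to the ink field and to $\mathbf{x}^{\mathrm{IP}}$; a pinkening step preserves $\mathrm{ink}^{\mathbf{x}}_t(b)$ pointwise (a red site of weight $1$ and a white site of weight $0$ are replaced by two pink sites of weight $1/2$); a depinkening step preserves $\mathrm{ink}^{\mathbf{x}}_t(b)$ in expectation by fairness of the coin. Part 1 of Assumption~\ref{assumpf} (the class-function property) is used here to ensure that the $\sigma$-law is invariant under the relabellings that swap red and white particles within a pinkening pair, so that the coupling is well-defined. For property (4), symmetry of the coupling under permutations of the initial red vertex among the $|V|-k+1$ non-blocker vertices gives, on $\mathrm{Fill}^\mathbf{x}$, a uniform distribution over ``which non-blocker vertex was originally distinguished''; combining this with property (2) yields the displayed identity.

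The main obstacle is property (3). Here I would prove a one-step contraction lemma: conditional on the chameleon configuration at the start of a pinkening interval, running the dynamics for $\kappa_1 T_{\mathrm{EX}(4,f,G)}(1/4)$ units of time brings a representative red-white pair into near-uniform joint distribution relative to the blockers, so that the subsequent pinkening followed by depinkening shrinks a natural $L^2$-type energy by a factor bounded away from $1$. The reason $T_{\mathrm{EX}(4,f,G)}(1/4)$ appears (rather than $T_{\mathrm{EX}(2,f,G)}(1/4)$) is that, conditional on the blocker trajectory, the joint motion of one red and one white vertex is governed by permutations that may simultaneously act on up to two of the blockers, so the relevant mixing comparison is with a four-body exclusion evolution; part 2 of Assumption~\ref{assumpf} (the $1/5$ bound on the fixed-point probability) is what keeps enough of the ink genuinely moving at each ring for this contraction to succeed. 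Iterating the contraction over $j$ successful pinkening cycles yields the second term $\exp\{j - t/(\kappa_1 T_{\mathrm{EX}(4,f,G)}(1/4))\}$. The companion term $c_1\sqrt{|V|}\,e^{-c_2j}$ comes from a Doob-type concentration estimate for the red-mass process $|R_t|+\frac12|P_t|$ conditioned on $\mathrm{Fill}^\mathbf{x}$: after $j$ independent pinkening and depinkening cycles, the deviation from the fill value has Gaussian tails of the stated form, with the $\sqrt{|V|}$ reflecting the total number of coin flips ever needed to fully colour the non-blocker vertex set. Combining the two estimates then yields the required bound.
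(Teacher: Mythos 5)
Your high-level outline correctly matches the paper's strategy: build a chameleon process on top of the graphical construction, make the blocker coordinates track the interchange process so that property (1) is automatic, and import the Oliveira-style martingale machinery for properties (3) and (4). However, there are genuine gaps at the two steps where all of the work lies in the hypergraph setting, and they would cause your proof to fail.

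First, your description of pinkening is not what the paper does and the difference matters. You describe pinkening as taking place at a separate deterministic sequence of times, at which pairs of red and white vertices are simply converted to pink. In the paper, pinkening takes place \emph{at the Poisson update times during designated colour-changing phases}, and the choice of which pairs to pinken is inextricably tied to the ringing edge $e_n$, the permutation $\sigma_n$, and its cycle decomposition. Specifically, the paper constructs, for each cycle (or product of disjoint transpositions) of $\sigma_n$, a collection of disjoint $4$-element (or $3$-element) blocks of vertices; if such a block has a $3$:$1$ red/white split, a certain pair within it is eligible to be pinkened. This is precisely where the functions $\rhofn_i$ and the set-valued function $A(\cdot)$ come in, and it is the reason the constant $4$ — and hence $T_{\mathrm{EX}(4,f,G)}(1/4)$ — appears. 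Your stated reason for the $4$ (that a permutation ``may simultaneously act on up to two of the blockers'') is not the mechanism used: the blocker movement plays no role in producing the $4$; it is the minimal block size needed to construct an involutive, conjugacy-class-preserving modification $\tilde\sigma$ of $\sigma$ that swaps one red and one white trajectory while leaving black trajectories fixed (cf.\ Remark~\ref{r:function_or_perm} and Corollary~\ref{C:tilde_rho_A_props}).

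Second, and more seriously, your argument for property (2) contains an incorrect step. You claim that ``a pinkening step preserves $\mathrm{ink}^{\mathbf{x}}_t(b)$ pointwise (a red site of weight $1$ and a white site of weight $0$ are replaced by two pink sites of weight $1/2$).'' That statement is false pointwise: the formerly-red vertex has its ink decreased from $1$ to $1/2$ and the formerly-white vertex has its ink increased from $0$ to $1/2$. What is preserved is total ink, but the identity in (2) is pointwise in $b$, and establishing it is the whole difficulty. The paper's proof of (2) proceeds by induction on the update times and hinges on Lemma~\ref{L:connection}: the pinkened configuration can be realised as a fair $\tfrac12$--$\tfrac12$ mixture of applying $\sigma_n$ and applying $\tilde\sigma_n := \tilde\rhofn_{A(\sigma_n)}(\sigma_n)$, where $\tilde\sigma_n$ has the \emph{same law} as $\sigma_n$ by the class-function property (Corollary~\ref{C:algorithm_random_input} and Lemma~\ref{L:Ahasproperty}) and moves black particles identically. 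Without this delicate construction there is no reason for the pointwise identity in (2) to hold, and your proposed coupling (pinkening pairs at deterministic times, decoupled from the permutations) would not satisfy it.

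Finally, the proposed $L^2$-contraction route to property (3) is vague and differs from the paper's route. The paper instead proves Lemma~\ref{L:lossofred} (a lower bound, uniform over configurations with $|P|<|R|\le|W|$, on the expected fraction of reds pinkened over a colour-changing phase of length $20T_{\mathrm{EX}(4,f,G)}(1/4)$), then converts this into an exponential-moment bound on the $j$th depinking time (Lemma~\ref{L:depinkbound}), and finally combines with the Oliveira bound relating ink to depinking times (Lemma~\ref{L:inktodepink}). If you wish to pursue an $L^2$-energy argument you would need to make the contraction lemma precise and verify it against the actual pinkening dynamics; as stated it does not engage with the $A$-set structure that makes pinkening possible in the first place.

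In short: properties (1) and (4) are fine in your sketch; property (2) has a concrete error (the pointwise-ink claim) masking the missing $\sigma\leftrightarrow\tilde\sigma$ coupling; and property (3) is handwaved via an unverified contraction and a wrong heuristic for the appearance of $T_{\mathrm{EX}(4,\cdot)}$. The paper's construction of $\tilde\sigma$ via $\rhofn_A$ and the connection lemma is the heart of the proof and is absent from your proposal.
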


The proof of Lemma~\ref{L:existence} is deferred to Section~\ref{S:chamhasprops} and is a proof by construction: in Section~\ref{S:cham} we will explicitly define a process and then proceed to show that it has the desired properties. We can now relate the total-variation distance between two
realisations of $\mathrm{IP}(k,f,G)$ to a certain expectation
involving the amount of ink in the chameleon process $M$ in the statement of Lemma~\ref{L:existence}. The following result is similar to Lemma 6.1 of \citep{Oliveira2013a}: we include a
sketch of the proof to highlight the importance of constructing in
Section~\ref{S:cham} a chameleon process satisfying
part \ref{item:ink} of Lemma~\ref{L:existence}.

\begin{lemma}\label{L:TVtoink}For every $t\ge0$,\[
  \sup_{{\bf x},{\bf y}\in(V)_k}\|\cL[{\bf
    x}_t^{\mathrm{IP}}]-\cL[{\bf
    y}_t^{\mathrm{IP}}]\|_{\mathrm{TV}}\le 2k\sup_{{\bf w}\in
    (V)_k}\E\left[1-\frac{\mathrm{ink}_t^{\bf
        w}}{|V|-k+1}\Big|\,\mathrm{Fill}^{\bf w}\right]
  \]
\end{lemma}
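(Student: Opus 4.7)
The plan is to combine a triangle-inequality reduction (yielding the factor $k$) with a direct ink-based bound for pairs of initial states that differ in a single coordinate (yielding the factor $2$). For the reduction, I would connect any ${\bf x}, {\bf y} \in (V)_k$ by a chain of at most $k$ intermediate tuples in $(V)_k$ whose consecutive members differ in exactly one coordinate; the hypothesis $k \le |V|/2$ leaves enough room in $V$ to avoid repetitions. Because $f_e$ is a class function (Assumption \ref{assumpf}(i)), the interchange process is invariant under relabellings of the particles, and so by applying such a relabelling at each step we may arrange for the differing coordinate to always be the last one. Thus, writing $m=|V|-k+1$, it suffices to prove
\[
\|\cL[{\bf x}_t^{\mathrm{IP}}] - \cL[{\bf y}_t^{\mathrm{IP}}]\|\tv \le 2 \sup_{{\bf w} \in (V)_k} \E\left[1 - \frac{\mathrm{ink}_t^{\bf w}}{m} \,\Big|\, \mathrm{Fill}^{\bf w}\right]
\]
for ${\bf x}=({\bf z},x)$ and ${\bf y}=({\bf z},y)$ with $x\ne y$.

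For such a single-coordinate case, I would use the test-function form \eqref{eq:tvint} of total variation together with property \ref{item:ink} of Lemma~\ref{L:existence}, which gives, for any $g : (V)_k \to [0,1]$,
\[
\E[g({\bf x}_t^{\mathrm{IP}})] = \E\left[\sum_{b} g({\bf z}_t^{C,{\bf x}}, b)\, \mathrm{ink}_t^{\bf x}(b)\right],
\]
and analogously for ${\bf y}$. The strategy is then to compare each of these expectations with a common ``saturated'' reference value, obtained by replacing $\mathrm{ink}_t^{\bf w}(b)$ by its would-be limit on Fill, namely $\indic{b \notin \mathbf{O}({\bf z}_t^{C,{\bf w}})}$. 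On $\mathrm{Fill}^{\bf w}$, property \ref{item:fill_independent} says that ${\bf z}_t^{C,{\bf w}}$ and $\mathrm{Fill}^{\bf w}$ are independent with $\P[\mathrm{Fill}^{\bf w}] = 1/m$, which allows the Fill contribution to be identified as a clean expression involving only the marginal law of ${\bf z}_t^{C,{\bf w}}$ and the $b$-average of $g$; after an appropriate symmetry argument this will agree for ${\bf x}$ and ${\bf y}$. The deviation between the actual ink profile and the saturated one is bounded in expectation, conditional on Fill, by property \ref{item:exp_ink}, producing the quantity $\E[1-\mathrm{ink}_t^{\bf w}/m\mid \mathrm{Fill}^{\bf w}]$ on the right-hand side, and combining the two contributions from ${\bf x}$ and ${\bf y}$ yields the factor $2$.

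The main technical challenge will be ensuring that the common reference value genuinely coincides for ${\bf x}$ and ${\bf y}$, so that it cancels in their difference. A priori $\cL[{\bf z}_t^{\mathrm{IP},{\bf x}}] \ne \cL[{\bf z}_t^{\mathrm{IP},{\bf y}}]$, since the evolution of the ${\bf z}$-coordinates is affected by the identity of the $k$-th particle, and one must use property \ref{item:fill_independent} to decouple the Fill event from ${\bf z}^{C}$ in a manner that erases the ${\bf x}$-versus-${\bf y}$ distinction. Simultaneously, the off-Fill contributions (carrying probability $1 - 1/m$) must be controlled; note that summing property \ref{item:ink} over all final states yields $\E[\mathrm{ink}_t^{\bf w}] = 1$, so the ``extra'' ink mass on Fill is balanced by the ink deficit off Fill, and these two contributions can be handled together by property \ref{item:exp_ink}. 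Making this cancellation precise, in close parallel to Oliveira's Lemma~6.1 for graphs, forms the bulk of the argument.
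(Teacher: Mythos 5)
Your treatment of the single-coordinate case is essentially the paper's argument, recast in test-function form: your ``saturated reference'' $R(g)=\E\bigl[\sum_b g({\bf z}_t^{C},b)\indic{b\notin\mathbf{O}({\bf z}_t^{C})}\indic{\mathrm{Fill}^{\bf w}}\bigr]$ is exactly $\E[g(\tilde{\bf x}_t^{\mathrm{IP}})]$ for $\tilde{\bf x}=({\bf z},\tilde x)$ with $\tilde x$ uniform on $V\setminus\mathbf{O}({\bf z})$, which is the comparison state the paper constructs explicitly, and your two one-sided deviations both collapse, via $\E[\mathrm{ink}_t^{\bf w}]=1$ and $\P[\mathrm{Fill}^{\bf w}]=1/(|V|-k+1)$, to $\E\bigl[1-\mathrm{ink}_t^{\bf w}/(|V|-k+1)\mid\mathrm{Fill}^{\bf w}\bigr]$.

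The reduction step, however, has a genuine gap. A chain in $(V)_k$ of at most $k$ single-coordinate changes connecting ${\bf x}$ to ${\bf y}$ need not exist: for $k=3$, ${\bf x}=(1,2,3)$ and ${\bf y}=(2,1,4)$ have Hamming distance $3$, yet any path of single-coordinate moves confined to $(V)_3$ has length at least $4$, since the first coordinate cannot become $2$ before the second has vacated $2$. Simultaneous relabelling of both tuples (the only relabelling that preserves the TV distance --- it is automatic for the interchange process and has nothing to do with Assumption~\ref{assumpf}) leaves the Hamming distance unchanged and cannot remove the obstruction; when ${\bf y}$ is obtained from ${\bf x}$ by disjoint transpositions of its entries, the minimal chain length is about $3k/2$, so your factor-$2$-per-step accounting gives $3k$, not $2k$. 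The paper (following Oliveira's Lemma~6.1) obtains $2k$ by comparing ${\bf x}$ and ${\bf y}$ each to the process started from a uniformly random element of $(V)_k$, through $k$ steps that each randomise one further coordinate, with each step costing a single copy of the supremum. Separately, your worry that ``a priori $\cL[{\bf z}_t^{\mathrm{IP},{\bf x}}]\neq\cL[{\bf z}_t^{\mathrm{IP},{\bf y}}]$'' is a misconception: the first $k-1$ coordinates of ${\bf x}_t^{\mathrm{IP}}$ equal $I_{[0,t]}({\bf z})$, which has no dependence whatsoever on the $k$-th coordinate, so these laws coincide exactly. It is this elementary autonomy, used silently in the paper's step $\P[\tilde{\bf x}_t^{\mathrm{IP}}={\bf b}]=\P[{\bf z}_t^{\mathrm{IP}}={\bf c}]/(|V|-k+1)$, that makes the reference value common to ${\bf x}$ and ${\bf y}$; property~\ref{item:fill_independent} serves only to rewrite $\P\bigl[\{{\bf z}_t^{C}={\bf c}\}\cap\mathrm{Fill}^{\bf w}\bigr]$ as $\P[{\bf z}_t^{C}={\bf c}]/(|V|-k+1)$.
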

\begin{proof} Fix ${\bf x}=({\bf z},x)\in(V)_k$ with ${\bf
    z}\in(V)_{k-1}$, and denote by ${\bf x}_t^{\mathrm{IP}}$ an
  interchange process started from ${\bf x}$. Let $\tilde x$ be
  uniform from $V\setminus{\bf O}({\bf z})$ and denote by $\tilde{\bf
    x}_t^{\mathrm{IP}}$ an interchange process started from
  $\tilde{\bf x}=({\bf z},\tilde x)$. Then for any ${\bf b}=({\bf
    c},b)\in (V)_k$,
  \[
  \P\bra{\tilde{\bf x}_t^{\mathrm{IP}}={\bf b}}=\frac{\P\bra{{\bf
        z}_t^{\mathrm{IP}}={\bf c}}}{|V|-k+1}=\frac{\P\bra{{\bf
        z}_t^C={\bf c}}}{|V|-k+1}=\P\bra{\{{\bf z}_t^C={\bf
      c}\}\cap\mathrm{Fill}^{\bf x}},
  \]
  where the second and third equalities follow from parts~\ref{item:z_same_path} and \ref{item:fill_independent} of Lemma \ref{L:existence}, respectively. On the other hand, part~\ref{item:ink} of Lemma \ref{L:existence} gives
  \[
  \P\bra{{\bf x}_t^{\mathrm{IP}}={\bf
      b}}=\E[\mathrm{ink}^{\bf
    x}_t(b)\indic{{\bf z}_t^C={\bf
      c}}]\ge\E[\mathrm{ink}^{\bf
    x}_t(b)\indic{\{{\bf z}_t^C={\bf
      c}\}\cap\mathrm{Fill}^{\bf x}}].
  \]
  Subtracting we obtain
  \[
  \P\bra{\tilde{\bf x}_t^{\mathrm{IP}}={\bf b}}-\P\bra{{\bf
      x}_t^{\mathrm{IP}}={\bf b}}\le
  \E[(1-\mathrm{ink}_t^{\bf x}(b))\indic{\{{\bf z}^C_t={\bf
      c}\}\cap\mathrm{Fill}^{\bf x}}].
  \]Hence
  \begin{align*}
    \|\cL[{\bf x}_t^{\mathrm{IP}}]-\cL[\tilde{\bf x}_t^{\mathrm{IP}}]\|_{\mathrm{TV}}&\le \sum_{({\bf c},b)\in(V)_k}\E[(1-\mathrm{ink}_t^{\bf x}(b))\indic{\{{\bf z}_t^C={\bf c}\}\cap\mathrm{Fill}^{\bf x}}]\\&=\E[(|V|-k+1-\mathrm{ink}^{\bf x}_t)\indic{\mathrm{Fill}^{\bf x}}]\\
    &=\E\left[ 1 - \frac{\mathrm{ink}^{\bf
          x}_t}{|V|-k+1}\Big|\,\mathrm{Fill}^{\bf x}\right]\,.
  \end{align*}
  The result now follows by repeated application of the triangle
  inequality, as in the proof of Lemma 6.1 of \citep{Oliveira2013a}.
\end{proof}
\begin{proof}[Proof of Lemma \ref{L:kto4}]We combine part~\ref{item:exp_ink} of Lemma \ref{L:existence} with Lemma \ref{L:TVtoink} to give
  for every $t\ge0$ and $j\in\mathbb{N}$,\[ \sup_{{\bf x},{\bf
      y}\in(V)_k}\|\cL[{\bf x}_t^{\mathrm{IP}}]-\cL[{\bf
    y}_t^{\mathrm{IP}}]\|_{\mathrm{TV}}\le
  2k\left\{c_1\sqrt{|V|}e^{-c_2j}+\exp\left\{j-\frac{t}{\kappa_1T_{\mathrm{EX}(4,f,G)}(1/4)}\right\}\right\},
  \]
  for some universal positive constants $c_1,c_2$ and $\kappa_1$. We
  choose
  \[
  j=\left\lfloor\frac{t}{(1+c_2)\kappa_1T_{\mathrm{EX}(4,f,G)}(1/4)}\right\rfloor,
  \]
  which gives the bound (using $k\le |V|$),
  \[
  \sup_{{\bf x},{\bf y}\in(V)_k}\|\cL[{\bf
    x}_t^{\mathrm{IP}}]-\cL[{\bf
    y}_t^{\mathrm{IP}}]\|_{\mathrm{TV}}\le
  c_3|V|^{3/2}\exp\left\{\frac{-c_2t}{(1+c_2)\kappa_1T_{\mathrm{EX}(4,f,G)}(1/4)}\right\},
  \]
  for some positive $c_3$. Therefore there exists a universal constant
  $C$ such that for any $\eps\in(0,1/2)$ and
  $t>CT_{\mathrm{EX}(4,f,G)}(1/4)\log(|V|/\eps)$,
  \[
  \sup_{{\bf x},{\bf y}\in(V)_k}\|\cL[{\bf
    x}_t^{\mathrm{IP}}]-\cL[{\bf
    y}_t^{\mathrm{IP}}]\|_{\mathrm{TV}}\le \eps.\qedhere
  \]
\end{proof}

\section{From 4-particle exclusion to 2-particle exclusion}\label{S:4to1}

  In this section we shall prove Lemma \ref{L:4to2}. We begin by characterizing every connected hypergraph in terms of how long it takes two independent random walkers on the hypergraph to arrive onto the
  same edge, which then rings for one of the walkers -- a time we shall
  refer to as the \emph{meeting time} of the two walkers (note that we
  do not require the two walkers to actually occupy the same vertex). It
  will be useful to consider such times, as we will be able to couple
  two independent walkers with a 2-particle interchange process, up until this meeting time (see Proposition \ref{P:couplingIPwithRW} for this statement).
  
  Formalising this, for ${\bf y}\in V^2$, let $({\bf y}^{\mathrm{RW}}_t)_{t\ge0}$ be a realisation
  of RW$(2,f,G)$ with ${\bf y}^{\mathrm{RW}}_0={\bf y}$. Denote by
  $\Lambda^1$ and $\Lambda^2$ the Poisson processes used to generate the
  edge-ringing times for the two particles, and let $\{e^1_n\}_{n\in\mathbb{N}}$ and
  $\{e^2_n\}_{n\in\mathbb{N}}$ be the two sequences of edge-choices (all
  as in Section
  \ref{S:graphical}).
  Define $M^{\mathrm{RW}}({\bf y})$ to be the first time ${\bf
  	y}^{\mathrm{RW}}_t(1)$ and ${\bf y}^{\mathrm{RW}}_t(2) $ are in the
  same edge which then rings in one of the processes:
  \begin{equation}\label{eq:meeting_time}
  M^{\mathrm{RW}}({\bf y}):=\inf\big\{t>0:\,\exists
  e\in\{e^1_{\Lambda^1[0,t]},e^2_{\Lambda^2[0,t]}\} \text{ with } {\bf
  	y}^{\mathrm{RW}}_{t-}(1), {\bf y}^{\mathrm{RW}}_{t-}(2)\in
  e\big\}.
  \end{equation}
  
  \begin{defn}
  	We say that a hypergraph $G$ is \emph{easy} if
  	\[
  	\sup_{{\bf y}\in V^2}\P\bra{M^{\mathrm{RW}}({\bf
  			y})>10^{10}T_{\mathrm{EX}(2,f,G)}(1/4)}\le 1/1000.
  	\]
  \end{defn}
  \begin{rmk}
  	We note that this definition is similar to Definition 4.1 of
  	\citep{Oliveira2013a}, from where we borrow the dichotomy ``easy/non-easy''. However, for the case of hypergraphs, this characterisation does not reflect the associated difficulty of dealing with each case! One difference in the case of hypergraphs is
  	that at the meeting time we cannot guarantee that the two independent
  	walkers occupy the same site, and this results in the analysis being
  	more challenging. 
    \end{rmk}

  \subsection{From 4-particle exclusion to 2-particle exclusion:
  	easy hypergraphs}\label{SS:4to2easy}
  We present a preliminary result which shows that we can couple two $k$-particle exclusion processes  initially sharing $k-1$ occupied vertices such that, with positive probability, at the meeting time of the $k$th particles the two processes will agree, given the $k$th particles meet on an edge of size at least 5 and the permutation chosen at this meeting time does not fix the $k$th particles.
  
Let $\Lambda$ be a Poisson process of rate $2|E|$ (i.e. twice the usual rate), with associated edge-choices $\{e_n\}_{n\in\mathbb{N}}$ and permutations $\{\sigma_n\}_{n\in\mathbb{N}}$ as in Section \ref{S:graphical}. In addition, let $\{\theta_n\}_{n\in\mathbb{N}}$ be an i.i.d. sequence of Bernoulli$(1/2)$ random variables: these will be used to thin the events of $\Lambda$ and ensure that all particles are moving at the correct rate. We write $\hat \Lambda$ for the thinned Poisson process obtained from $\Lambda$ by removing all points corresponding to $\theta_n=0$. Let $\hat I_t$ be constructed from $\hat\Lambda$ as in Section~\ref{S:graphical}.  	We make this modification as it allows us to more easily compare a certain time to the meeting time of two independent random walkers as defined in \eqref{eq:meeting_time}. 
  
\begin{lemma}\label{L:matchupkth}
Let $D\in \binom{V}{k-1}$, $a,b\in V\setminus D$ and $A=D\cup\{a\}$, $B=D\cup\{b\}$. Let $(A_t^{\widehat{\mathrm{EX}}})_{t\ge0}$ and $(B_t^{\widehat{\mathrm{EX}}})_{t\ge0}$ be two realisations of EX$(k,f,G)$ started from $A$ and $B$ respectively and evolving according to $\hat I_t$. Let 	\[
 	\tau_{a,b}:=\inf\{t\ge 0:\, \hat I_t(a),\,\hat I_t(b)\in e_{\Lambda[0,t]} \}\,,
 	\] 	and write $e_{a,b}$ for $e_{\Lambda[0,\tau_{a,b}]}$ and $\sigma_{a,b}$ for $\sigma_{\Lambda[0,\tau_{a,b}]}$. Write also $a^\ast$ for $\hat I_{[0,\tau_{a,b})}(a)$ and $b^\ast$ for $\hat I_{[0,\tau_{a,b})}(b)$. Then there exist two other realisations of EX$(k,f,G)$ denoted $(A_t^{\widetilde{\mathrm{EX}}})_{t\ge0}$ and $(B_t^{\overline{\mathrm{EX}}})_{t\ge0}$ which start and evolve identically to $(A_t^{\widehat{\mathrm{EX}}})_{t\ge0}$ and $(B_t^{\widehat{\mathrm{EX}}})_{t\ge0}$ respectively up to time $\tau_{a,b}-$ but which satisfy, on the event $$\{\sigma_{a,b}(a^\ast)\neq a^\ast\}\cap\{|e_{a,b}|>4\},$$ with probability at least $\frac2{25}$,  $A_t^{\widetilde{\mathrm{EX}}}=B_t^{\overline{\mathrm{EX}}}$ for all $t\ge\tau_{a,b}$.
\end{lemma}
\begin{proof}
We define two events which will be used to determine the coupling strategy of the processes $(A_t^{\widetilde{\mathrm{EX}}})_{t\ge0}$ and $(B_t^{\overline{\mathrm{EX}}})_{t\ge0}$ at time $\tau_{a,b}$:
 	\begin{align*}
 	J_1(\sigma_{a,b}) &= \big\{ \sigma_{a,b} (a^*)\notin \{ a^*,\,b^*\},\, \sigma_{a,b} (b^*) \notin \{ a^*,\,b^*\}\big\} \\
 	J_2(\sigma_{a,b}) &= J_1(\sigma_{a,b}) \cap \left\{\big|\hat I_{\tau_{a,b}-}(D) \cap \sigma_{a,b} (a^*)\big| = \big|\hat I_{\tau_{a,b}-}(D)\cap \sigma_{a,b} (b^*)\big|   \right\} \,.
 	\end{align*}
 	In words, $J_1(\sigma_{a,b})$ is the event that the permutation $\sigma_{a,b}$ moves the set of two `special' particles (those initially at vertices $a$ and $b$) to a new set of positions; event $J_2(\sigma_{a,b})$ further specifies that the two positions to which $\sigma_{a,b}$ moves the special particles should either both contain another particle (i.e. one of the already-matched $k-1$ particles) or both be empty.
 	
 	With this notation in place, we can describe the coupling at time $\tau_{a,b}$:
 	\begin{itemize}
 		\item[(i)] if $\theta_{a,b} = 0$ then we do not update the processes at time $\tau_{a,b}$;
 		\item[(ii)] if $\theta_{a,b} = 1$ but event $J_2(\sigma_{a,b})$ fails to hold, then we apply permutation $\sigma_{a,b}$ in both processes;
 		\item[(iii)] if $\theta_{a,b} = 1$ and event $J_2(\sigma_{a,b})$ holds, we update the `$A$' process with permutation $\sigma_{a,b}$ and the `$B$' process with permutation $\overline\sigma_{a,b}$, where
 		\[ \overline\sigma_{a,b} = \sigma_{a,b}\circ\big(\sigma_{a,b}(a^*)\,\,\sigma_{a,b}(b^*)\big)\big(\sigma^2_{a,b}(a^*)\,\,\sigma^2_{a,b}(b^*)\big)\,. \]  (Here and throughout we use the convention
 		that composition of permutations corresponds to multiplication on the right: $\sigma\circ\rho = \rho\sigma$.)
 	\end{itemize}
 	Figure~\ref{f:l44} demonstrates the relationship between $\sigma_{a,b}$ and $\overline\sigma_{a,b}$ in the simple case where $\sigma_{a,b}$ is a single cycle. 
 	\begin{figure}[!ht]
 	\centering
 	\includegraphics[scale=1]{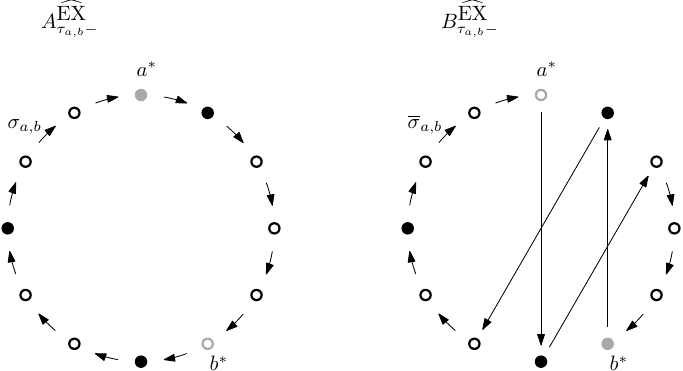}
 	 	\caption{The left/right image shows the state of the process on edge $e_{a,b}$ at time $\tau_{a,b}-$ in the `A'/`B' process. Also indicated  are the permutations $\sigma_{a,b}$ and $\overline\sigma_{a,b}$ which are to be applied in case (iii).}
 	\label{f:l44}
 	\end{figure}
 	To show that this is a valid coupling, it suffices to show that in case (iii) the permutation $\overline\sigma_{a,b}$ belongs to the same conjugacy class as $\sigma_{a,b}$, and that there is a bijection between the two permutations. By inspection, the cyclic decomposition of $\overline\sigma_{a,b}$ is obtained from that of $\sigma_{a,b}$ just by exchanging the elements $\sigma_{a,b}(a^*)$ and $\sigma_{a,b}(b^*)$, and so both permutations belong to the same conjugacy class. Moreover, there is a bijection between them since 
 	\[
 	\sigma_{a,b}(a^*)=\overline\sigma_{a,b}(b^*)\quad\text{and}\quad 	\sigma_{a,b}(b^*)=\overline\sigma_{a,b}(a^*) \,,
 	\]
 	and so $J_1(\sigma_{a,b}) =J_1(\overline\sigma_{a,b})$ and $J_2(\sigma_{a,b}) =J_2(\overline\sigma_{a,b})$.
 	
 	Furthermore, it follows from the above analysis that our coupling strategy in case (iii) gives $\sigma_{a,b}(a^*)=\tilde\sigma_{a,b}(b^*)$,  and furthermore, $\sigma_{a,b}(\hat I_{\tau_{a,b}-}(D))=\overline\sigma_{a,b}(\hat I_{\tau_{a,b}-}(D))$. Thus in order to complete this proof, we need to show that $\P\bra{\theta_{a,b}=1,\,J_2(\sigma_{a,b})\,|\, \sigma_{a,b}(a^\ast)\neq a^\ast\},\,\{|e_{a,b}|>4} \ge 2/25$.

We have
\begin{align*}
\P\bra{J_1(\sigma_{a,b})\mid \sigma_{a,b}(a^\ast)\neq a^\ast,\,|e_{a,b}|>4}&=\P\bra{\sigma_{a,b}(a^\ast)\notin\{a^\ast,b^\ast\}\mid \sigma_{a,b}(a^\ast)\notin a^\ast,\,|e_{a,b}|>4}\\
&\qquad\qquad\cdot\P\bra{\sigma_{a,b}(b^\ast)\notin\{a^\ast,b^\ast\}\mid \sigma_{a,b}(a^\ast)\notin\{a^\ast,b^\ast\},\,|e_{a,b}|>4}.
\end{align*}
Using parts \ref{assumpf-conjugacy} and \ref{assumpf-fixed} of Assumption~\ref{assumpf}) this becomes
 	\begin{align*}
 	\P\bra{J_1(\sigma_{a,b})\mid \sigma_{a,b}(a^\ast)\neq a^\ast,\,|e_{a,b}|>4}&\ge \frac45\left(1-\P\bra{\sigma_{a,b}(b^\ast)=b^\ast}-\frac{1-\P\bra{\sigma_{a,b}(b^\ast)=b^\ast}}{4}\right)\ge\frac{12}{25}.
 	\end{align*}
 	 Therefore,
 	\begin{align*}
 	\P\bra{\theta_{a,b}=1,\,J_2(\sigma_{a,b})\mid \sigma_{a,b}(a^\ast)\neq a^\ast,\,|e_{a,b}|>4} &= \frac12 \P\bra{J_1(\sigma_{a,b})\mid \sigma_{a,b}(a^\ast)\neq a^\ast,\,|e_{a,b}|>4}\\
 	&\qquad\qquad\cdot\P\bra{J_2(\sigma_{a,b})\mid J_1(\sigma_{a,b}),\,  \sigma_{a,b}(a^\ast)\neq a^\ast,\,|e_{a,b}|>4} \nonumber \\
 	&\ge \frac{6}{25} \,\P\bra{J_2(\sigma_{a,b})\mid J_1(\sigma_{a,b}),\, \sigma_{a,b}(a^\ast)\neq a^\ast,\,|e_{a,b}|>4}\,. \nonumber 
 	\end{align*}
 	But conditioned on $J_1(\sigma_{a,b})$ and $\{\sigma_{a,b}(a^\ast)\neq a^\ast,\,|e_{a,b}|>4\}$ both holding, $J_2(\sigma_{a,b})$ is the event that two of the positions in $e_{a,b}$ not containing $a^*$ or $b^*$ either both contain a matched particle or are both empty; since $|e_{a,b}|\ge 5$ this probability is at least $1/3$, thanks to part \ref{assumpf-conjugacy} of Assumption~\ref{assumpf}, and so our proof is complete.	\qedhere
 	
 \end{proof}
 We now present the main result of this section.
 \begin{lemma}\label{L:ktok-1}
 	There exists $\kappa>0$ such that for any easy hypergraph $G$, any $f$ and $0<\varepsilon<1/2$,
 	\[T_{\mathrm{EX}(k,f,G)}(\varepsilon)\le \kappa\log(1/\varepsilon) T_{\mathrm{EX}(k-1,f,G)}(1/4),\] for any $3\le k\le |V|/2$ if $|V|<36$ and any $k\in\{3,4\}$ if $|V|\ge36$.
 \end{lemma}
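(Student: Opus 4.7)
By Proposition~\ref{P:1to1eps}, it suffices to find a universal constant $\kappa'$ with
\[
T_{\mathrm{EX}(k,f,G)}(1/16)\le \kappa'\, T_{\mathrm{EX}(k-1,f,G)}(1/4),
\]
and then iterate to gain the factor $\log(1/\varepsilon)$. The plan is a two-stage coupling argument: starting from any $A_0\in\binom{V}{k}$ and $B_0\sim\pi$ (the uniform measure on $\binom{V}{k}$), I will produce a joint evolution $(A_t,B_t)$ of EX$(k,f,G)$ realisations with $\mathbb{P}[A_T=B_T]\ge 15/16$ at time $T=\kappa' T_{\mathrm{EX}(k-1,f,G)}(1/4)$.

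\emph{Stage 1 (couple $k-1$ of the particles).} Designate one particle in each of $A$ and $B$ as ``distinguished'', so each process is lifted to a state of the form (marked vertex, unordered $(k-1)$-tuple). Forgetting the marked particle, the $(k-1)$-particle subsystem evolves as EX$(k-1,f,G)$. Running for time $T_1 = O(T_{\mathrm{EX}(k-1,f,G)}(1/4))$, Proposition~\ref{P:AldousFill} provides a uniform lower bound on the transition probabilities of each $(k-1)$-subsystem, so the two unmarked subsystems can be coupled to coincide by time $T_1$ with high probability. After Stage 1, the two processes differ only in the location of the marked particle: $A_{T_1}\triangle B_{T_1}=\{u,v\}$ for some $u\ne v$.

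\emph{Stage 2 (resolve the remaining disagreement via the easy-hypergraph property).} Using Proposition~\ref{P:couplingIPwithRW}, I couple the two marked particles with two independent realisations of RW$(f,G)$ up to their meeting time $M^{\mathrm{RW}}$. The easy-hypergraph hypothesis gives $\mathbb{P}[M^{\mathrm{RW}}>10^{10}T_{\mathrm{EX}(2,f,G)}(1/4)]\le 1/1000$. When the two marked particles first share a ringing edge $e$, I would apply a bespoke joint sampling of the permutations $\sigma_A,\sigma_B\sim f_e$ used in each process: exploiting the class-function property (Assumption~\ref{assumpf}(1)) together with the bounded fixed-point probability (Assumption~\ref{assumpf}(2)), I can construct a joint distribution on $(\sigma_A,\sigma_B)$ whose marginals are each $f_e$, and under which --- with positive probability $\delta$ bounded below uniformly in $e$ --- the two marked particles land at the same vertex and $\sigma_A,\sigma_B$ agree on the common particles in $e$, merging the processes. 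A failed attempt leaves at most a single (possibly relocated) disagreement pair, so iterating a geometric number of times succeeds with high probability. Each round takes $O(T_{\mathrm{EX}(2,f,G)}(1/4))$, and the overall Stage~2 duration is $O(T_{\mathrm{EX}(k-1,f,G)}(1/4))$, using the monotonicity $T_{\mathrm{EX}(2,f,G)}(1/4)\le c\,T_{\mathrm{EX}(k-1,f,G)}(1/4)$ for $k\ge 3$ (obtainable by a contraction argument on the interchange process).

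The principal obstacle is designing the merging coupling in Stage~2. A naïve ``apply $\sigma$ in $A$, apply $\sigma\!\cdot\!(u\,v)$ in $B$'' coupling breaks the marginals, since $f_e$ is only a class function and not a priori right-invariant under transpositions. Overcoming this requires a careful randomised averaging within conjugacy classes, where Assumption~\ref{assumpf}(2) plays a quantitative role in ensuring that $u$ and $v$ are sufficiently often non-fixed so that the coupling probability $\delta$ is bounded away from zero. Stage~1 is more standard but still delicate, since EX$(k,f,G)$ is not literally a product of EX$(k-1,f,G)$ with a free particle; the projection must be justified via the interchange construction and the common graphical representation.
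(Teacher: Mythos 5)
Your two-stage coupling architecture is exactly the paper's: evolve $U$, $W$ independently for a while, designate one ``special'' particle in each, maximally couple the remaining $(k-1)$-particle exclusion subsystems (this is legitimate because, through the graphical construction, tracking $k-1$ of the $k$ labelled trajectories of the underlying interchange process and then forgetting labels yields EX$(k-1,f,G)$), and then wait for the two special particles to share a ringing edge, at which point a tailored coupling of the two permutations is used to merge the configurations. So I would not call your route genuinely different from the paper's. However, the proposal has two concrete gaps at precisely the points you flag as ``the principal obstacle,'' and one of those gaps reflects a missing case split that the paper has to make.

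First, your claim that you ``can construct a joint distribution on $(\sigma_A,\sigma_B)$ whose marginals are each $f_e$ and under which --- with probability $\delta$ bounded below uniformly in $e$ --- the two marked particles land at the same vertex and the common particles stay matched'' is asserted, not established, and as stated it is false. The paper constructs such a coupling (Lemma~\ref{L:matchupith}) only in the case $|e_{a,b}|>4$: there it takes $\tilde\sigma_{a,b} = \sigma_{a,b}\circ\big(\sigma_{a,b}(a^*)\,\,\sigma_{a,b}(b^*)\big)\big(\sigma^2_{a,b}(a^*)\,\,\sigma^2_{a,b}(b^*)\big)$, shows this is a self-inverse bijection of the conjugacy class (so the class-function property gives equal marginals), shows it cannot disturb any already-matched non-special particle, and only then works to lower-bound the merge probability by $1/30$. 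None of that is trivial and the constant truly requires $|e|\ge5$. For $|e_{a,b}|\le 4$ this construction does not give a usable bound, and the paper uses an entirely separate case analysis (event $E^4_{a,b}$) that hinges on the special particles being \emph{alone} on the edge, followed by a union-bound over matched particles which happen to also occupy $e_{a,b}$. Your proposal does not address the small-edge case at all, so the ``uniform $\delta$'' claim breaks.

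Second, you have not accounted for the contribution of the meeting-but-no-move event, namely the case where the chosen permutation fixes the special particle. The paper isolates this as $E^2_{a,b}$ and controls it using part~2 of Assumption~\ref{assumpf} (fixed-point probability $\le 1/5$); this bound is what keeps the one-shot TV estimate strictly below $1$ and makes the final submultiplicativity step work. Relatedly, your ``iterate a geometric number of times'' step has to be justified against the possibility that a failed merging attempt un-matches some of the already-coupled $k-1$ particles; the paper's coupling is designed so that, off the success event, both processes receive the \emph{same} permutation (preserving the match), and your proposal needs to enforce the analogous invariant explicitly.
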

 In this section we will make use of this lemma only for the case $|V|\ge 36$, but this result will later be used in its full form when dealing with the case of $|V|<36$: see Section \ref{S:smallV}. The proof uses a coupling argument for two realisations of EX($k,f,G$).
 
 \begin{proof}
 	For $U=\{u_1,\ldots,u_k\}, W=\{w_1,\ldots,w_k\}\in \binom{V}{k}$, let $(U^{\mathrm{EX}}_t)_{t\ge0}$ and $(W^{\widetilde{\mathrm{EX}}}_t)_{t\ge0}$
 	be two realisations of EX$(k,f,G)$ started from $U$ and $W$
 	respectively.
 	 	We define the two processes on a common probability space, and will show how to couple them in such a way that we can lower-bound the probability that $U_{\kappa T}^\mathrm{EX}=W_{\kappa T}^{\widetilde{\mathrm{EX}}}$ for some $\kappa>0$ to be determined, where $T:=T_{\mathrm{EX}(k-1,f,G)}(1/4)$. The result will then follow by applying \eqref{eq:tvcoupling}.
 	
 	We begin by allowing the two processes to evolve independently up to time $10T$. Then, for any $S\subset\binom{V}{k}$ and $t\ge0$, we have
 	\begin{align*}
 	\P\bra{U_{10T+t}^\mathrm{EX}\in S}-\P\bra{W_{10T+t}^{\widetilde{\mathrm{EX}}}\in S}&=\E\left[\P\bra{U_{10T+t}^\mathrm{EX}\in S|\,U_{10T}^\mathrm{EX}}-\P\bra{W_{{10T}+t}^{\widetilde{\mathrm{EX}}}\in S|\,W_{10T}^{\widetilde{\mathrm{EX}}}}\right]\\
 	&\le\E\left[\|\mathcal{L}[U_{{10T}+t}^\mathrm{EX}\,|\,U_{10T}^\mathrm{EX}]-\mathcal{L}[W_{{10T}+t}^{\widetilde{\mathrm{EX}}}\,|\,W_{10T}^{\widetilde{\mathrm{EX}}}]\|\tv\right]\,,
 	\end{align*}
 	where the inequality follows from \eqref{eq:tvdef}.
 	Maximizing over $S$ and again using \eqref{eq:tvdef} gives
 	\begin{align}\label{eq:tvUW}
 	\|\mathcal{L}[U_{{10T}+t}^\mathrm{EX}]-\mathcal{L}[W_{{10T}+t}^{\widetilde{\mathrm{EX}}}]\|\tv\le\E\left[\|\mathcal{L}[U_{{10T}+t}^\mathrm{EX}\,|\,U_{10T}^\mathrm{EX}]-\mathcal{L}[W_{{10T}+t}^{\widetilde{\mathrm{EX}}}\,|\,W_{10T}^{\widetilde{\mathrm{EX}}}]\|\tv\right]\,.
 	\end{align}
 	By the Markov property, for any $A,B\in \binom{V}{k}$,
 	\begin{align}\notag
 	\|\mathcal{L}[U_{{10T}+t}^\mathrm{EX}\,|\,U_{10T}^\mathrm{EX}= A]-\mathcal{L}[W_{{10T}+t}^{\widetilde{\mathrm{EX}}}\,|\,W_{10T}^{\widetilde{\mathrm{EX}}}=B]\|\tv &= \| \mathcal{L}[A_t^{\mathrm{EX}}]-\mathcal{L}[B_t^{\widetilde{\mathrm{EX}}}]\|\tv\\
 	&\le\P\bra{A_t^\mathrm{EX}\neq B_t^{\widetilde{\mathrm{EX}}}}\,,\label{eq:tvcondUW}
 	\end{align}
 	for any coupling of $(A_t^\mathrm{EX})_{t\ge0}$ and $(B_t^{\widetilde{\mathrm{EX}}})_{t\ge0}$, by \eqref{eq:tvcoupling}, and where $\mathcal{L}[\cdot|\cdot]$ denotes a conditional law.
 	
Recall from Section~\ref{S:graphical} the construction of the permutation $I_t$ for each $t\ge0$. 	
For any $A,B\in\binom{V}{k}$, let $a$ and $b$ be two uniformly and independently chosen elements of $A$ and $B$, respectively. Given $a$, consider now the $k$-particle process $(A^a_t)_{t\ge0}=(I_t(a),I_t(A\setminus\{a\}))_{t\ge0}$ which evolves in the same way as the exclusion process begun at $A$, but with the label of the particle started from position $a$ being tracked.
%(this process can be described formally via the graphical construction, see Section~\ref{S:graphical}).
 Thus $(A^a_t)_{t\ge0}$ can be thought of as something `between' an exclusion process (in which no labels are tracked) and an interchange process (in which all labels are tracked). It's clear that the $k-1$ particles initially at vertices in $A\setminus\{a\}$ behave marginally as an exclusion process, while the particle started from $a$ behaves (again marginally) as a random walk on $G$. Furthermore, the exclusion process $(A^\mathrm{EX}_t)_{t\ge0}$ can be recovered from $(A^a_t)_{t\ge0}$ simply by `forgetting' which position is occupied by the `special' particle starting from $a$, i.e. $A^\mathrm{EX}_t=\{I_t(a),I_t(A\setminus\{a\})\}$. In a similar manner, for given $b$ and another permutation-valued process $(\tilde I_t)_{t\ge0}$, we also define the process $(\tilde B_t^{b})_{t\ge0}=(\tilde I_t(b),\tilde I_t(B\setminus\{b\}))_{t\ge0}$.
 	
 	Over the time period $[0,10T]$ we couple the processes $(A^a_t)_{t\ge0}$ and $(\tilde B^b_t)_{t\ge0}$ using a maximal coupling of the $(k-1)$-particle exclusion processes $I_{t}(A\setminus\{a\})$ and $\tilde I_{t}(B\setminus\{b\})$. (Recall that a maximal coupling is one which achieves equality in the coupling inequality \eqref{eq:tvcoupling}. This maximal coupling is actually more than is needed here; we will only be interested in the state of the processes at time $10T$.) By Proposition \ref{P:1to1eps} we have
 	\begin{align}\label{eq:TEX10T}
 	T_{\mathrm{EX}(k-1,f,G)}\left(1/500\right)\le\left\lceil\frac{\log\left(\frac1{500}\right)}{\log\left(\frac1{2}\right)}\right\rceil T<10T.
 	\end{align} 
 	Given the choice of $a$ and $b$, let $F_{a,b}$ denote the event that the other $(k-1)$ particles have coupled by time $10T$, i.e. $F_{a,b}=\big\{I_{10T}(A\setminus\{a\})=\tilde I_{10T}(B\setminus\{b\})\big\}$.
 	Using this maximal coupling it follows from \eqref{eq:TEX10T} that $\P\bra{F_{a,b}} \ge 499/500$.
 	 	Combining this with equations \eqref{eq:tvUW} and \eqref{eq:tvcondUW} we see that for any $K\in \mathbb N$,
 	\begin{align}
 	&\|\mathcal{L}[U_{(20+K)T}^\mathrm{EX}]-\mathcal{L}[W_{(20+K)T}^{\widetilde{\mathrm{EX}}}]\|\tv\nonumber\\
 	&\le\sum_{A,B\in\binom{V}{k}}\P\bra{U_{10T}^\mathrm{EX}=A,\,W_{10T}^{\widetilde{\mathrm{EX}}}=B}\P\bra{A_{(10+K)T}^\mathrm{EX}\neq B_{(10+K)T}^{\widetilde{\mathrm{EX}}}} \nonumber \\
 	&=\sum_{A,B\in\binom{V}{k}}\P\bra{U_{10T}^\mathrm{EX}=A}\P\bra{W_{10T}^{\widetilde{\mathrm{EX}}}=B}\P\bra{A_{(10+K)T}^\mathrm{EX}\neq B_{(10+K)T}^{\widetilde{\mathrm{EX}}}} \nonumber\\
 	&\le\sum_{A,B\in\binom{V}{k}}\P\bra{U_{10T}^\mathrm{EX}=A}\P\bra{W_{10T}^{\widetilde{\mathrm{EX}}}=B}\sum_{\substack{a\in A\\b\in B}}\frac1{k^2}\left(1-\P\bra{F_{a,b}} + \P\bra{A_{(10+K)T}^\mathrm{EX}\neq B_{(10+K)T}^{\widetilde{\mathrm{EX}}},\, F_{a,b}}\right) \nonumber \\
 	\label{eq:tvUWfinal}
 	&\le \frac1{500}+\sum_{A,B\in\binom{V}{k}}\P\bra{U_{10T}^\mathrm{EX}=A}\P\bra{W_{10T}^{\widetilde{\mathrm{EX}}}=B}\sum_{\substack{a\in A\\b\in B}}\frac1{k^2}\P\bra{A_{(10+K)T}^\mathrm{EX}\neq B_{(10+K)T}^{\widetilde{\mathrm{EX}}},\, F_{a,b}}\,,
 	\end{align}
 	where the equality is thanks to the independence of $U$ and $W$ over $[0,10T]$.
 	
 	 	From \eqref{eq:tvUWfinal} we see that we now need to upper bound the probability that $(A^\mathrm{EX}_t)_{t\ge0}$ and $(B^{\widetilde{\mathrm{EX}}}_t)_{t\ge0}$ do not agree by time $(10+K)T$, on the event $F_{a,b}$. As pointed out above, this event is equivalent (on $F_{a,b}$) to the \emph{locations} of the $k$ particles in $A^a_{(10+K)T}$ and $\tilde B^b_{(10+K)T}$ not agreeing.
 	
 	We shall bound this probability by coupling the processes $(A_{10T+t}^a)_{t\ge0}$ and $(\tilde B_{10T+t}^{ b})_{t\ge 0}$ in the following manner. Recall the Poisson process $\Lambda$ of rate $2|E|$ at the start of Section~\ref{SS:4to2easy} with associated edge-choices $\{e_n\}_{n\in\mathbb{N}}$, permutations $\{\sigma_n\}_{n\in\mathbb{N}}$, and Bernoulli $(1/2)$ random variables $\{\theta_n\}_{n\in\mathbb{N}}$ (used to thin the events of $\Lambda$). 
Prior to a time $\tau_{a,b}$ defined below we evolve $(A_{10T+t}^a)_{t\ge0}$ and $(\tilde B_{10T+t}^b)_{t\ge 0}$ by applying permutation $\sigma_n$ to edge $e_n$ (in both processes) at the $n^\text{th}$ incident time of $\Lambda$ if and only if $\theta_n=1$, so formally we have for each $0\le t<\tau_{a,b}$,
 	\[
 	A_{10T+t}^a=\hat I_t\big(I_{10T}(a),I_{10T}(A\setminus\{a\})\big),\qquad \tilde B_{10T+t}^{ b}=\hat I_t\big(\tilde I_{10T}(b),\tilde I_{10T}(B\setminus\{b\})\big).
 	\]
Note that, since we use a common set of innovations over the period \mbox{$[10T,10T+\tau_{a,b})$,}  on event $F_{a,b}$ we have $D:=\hat I_{\tau_{a,b}-}(I_{10T}(A\setminus\{a\}))=\hat I_{\tau_{a,b}-}(\tilde I_{10T}(B\setminus\{b\}))$; that is, the locations of the $k-1$ unlabelled particles of $A^a$ and $\tilde B^b$ still agree at time $\tau_{a,b}-$.  By the Markov property, on event $F_{a,b}$ we can thus write
\[
A_{10T+t}^a=\hat I_t(I_{10T}(a),D),\qquad \tilde B_{10T+t}^{ b}=\hat I_t\big(\tilde I_{10T}(b),D).
\]	
 	
 	We define $\tau_{a,b}$ to be the first time that the `special' particles initially at $a$ and $b$ are in a common edge which then rings (note this has a slightly different definition from $\tau_{a,b}$ defined in the statement of Lemma~\ref{L:matchupkth}):
 	\[
 	\tau_{a,b}:=\inf\{t\ge 0:\, \hat I_t(I_{10T}(a)),\,\hat I_t(\tilde I_{10T}(b))\in e_{\Lambda[0,t]} \}\,.
 	\]

Note that the processes $(\hat I_t(I_{10T}(a)))_{t\ge0}$ and $(\hat I_t(\tilde I_{10T}(b)))_{t\ge0}$ when viewed marginally behave as independent random walks over the period $[0,\tau_{a,b})$, and so $\tau_{a,b}$ has the same distribution as the meeting time $M^{\mathrm{RW}}( I_{10T}(a),\tilde I_{10T}(b))$ in \eqref{eq:meeting_time}.

To determine how to couple the processes at time $\tau_{a,b}$ we partition the probability space according to the following four sets (for some $K\in\mathbb N$ which is yet to be determined), denoting $\hat I_{\tau_{a,b}-}(I_{10T}(a))$ by $a^\ast$ for ease of readability:
 	\begin{align*}
 	E^1_{a,b}&:=\{\tau_{a,b}>KT\},\\
 	 	E^2_{a,b}&:=\{\tau_{a,b}\le KT,\,\sigma_{a,b}(a^\ast)=a^\ast\},\\
 	E^3_{a,b}&:=\{\tau_{a,b}\le KT,\,\sigma_{a,b}(a^\ast)\neq a^\ast,\,|e_{a,b}|> 4\},\\
 	E^4_{a,b}&:=\{\tau_{a,b}\le KT,\,\sigma_{a,b}(a^\ast)\neq a^\ast,\,|e_{a,b}|\leq 4\}.
 	\end{align*}
 	
For the first two cases, we shall not specify the coupling, as it does not matter how we update the processes at time $\tau_{a,b}$. 	First, for the case of $E^1_{a,b}$, we have 
 	\begin{align}
 	\sum_{A,B\in\binom{V}{k}}&\P\bra{U_{10T}^\mathrm{EX}=A}\P\bra{W_{10T}^{\widetilde{\mathrm{EX}}}=B}\sum_{\substack{a\in A\\b\in B}}\frac1{k^2}\P\bra{A_{(10+K)T}^\mathrm{EX}\neq B_{(10+K)T}^{\widetilde{\mathrm{EX}}},\, F_{a,b},\, E^1_{a,b}}\nonumber \\
 	&\le \sum_{A,B\in\binom{V}{k}}\P\bra{U_{10T}^\mathrm{EX}=A}\P\bra{W_{10T}^{\widetilde{\mathrm{EX}}}=B}\sum_{\substack{a\in A\\b\in B}}\frac1{k^2}\P\bra{E^1_{a,b}}\le \max_{a,b}\P\bra{E^1_{a,b}}.\label{eq:E1_bound}
 	\end{align}
 	
 	Second, for the case of $E^2_{a,b}$, we have
 	\begin{align}
 	\sum_{A,B\in\binom{V}{k}}&\P\bra{U_{10T}^\mathrm{EX}=A}\P\bra{W_{10T}^{\widetilde{\mathrm{EX}}}=B}\sum_{\substack{a\in A\\b\in B}}\frac1{k^2}\P\bra{A_{(10+K)T}^\mathrm{EX}\neq B_{(10+K)T}^{\widetilde{\mathrm{EX}}},\,F_{a,b},\,E^2_{a,b}}\nonumber\\
 	&\le \sum_{A,B\in\binom{V}{k}}\P\bra{U_{10T}^\mathrm{EX}=A}\P\bra{W_{10T}^{\widetilde{\mathrm{EX}}}=B}\sum_{\substack{a\in A\\b\in B}}\frac1{k^2}\P\bra{\sigma_{a,b}(a^\ast)=a^\ast} \nonumber\\
 	 	&=\sum_{a,b\in V}\frac1{k^2}\P\bra{\sigma_{a,b}(a^\ast)=a^\ast}\P\bra{a\in U_{10T}^\mathrm{EX}}\P\bra{b\in W_{10T}^{\widetilde{\mathrm{EX}}}} \nonumber\\
 	&\le \frac{k^2}{|V|^2}\sum_{a,b\in V}\frac1{k^2}\P\bra{\sigma_{a,b}(a^\ast)=a^\ast} + \|\cL[(a,b)_{10T}^{\mathrm{RW}}]-\textrm{Unif}(V^2)\|_{\mathrm{TV}} \quad\quad\text{(using \eqref{eq:law_I_inverse} and \eqref{eq:tvint})}\label{eq:TV_fn_bound} \\
 	&\le \frac1{|V|^2}\sum_{a,b\in V}\P\bra{\sigma_{a,b}(a^\ast)=a^\ast} + \frac{2}{500}\,,\label{eq:E2_bound}
 	\end{align}
 	where the last inequality uses \eqref{eq:prodtv}, \eqref{eq:TEX10T} and the contraction principle.
 	
 	Third, conditioned on the event $E^3_{a,b}$ and $F_{a,b}$, by Lemma~\ref{L:matchupkth} we can couple the processes so that $\hat I_{\tau_{a,b}}(I_{10T}(A))=\hat I_{\tau_{a,b}}(\tilde I_{10T}(B))$ with probability at least $2/25$, giving
 	\begin{align}
 	\sum_{A,B\in\binom{V}{k}}&\P\bra{U_{10T}^\mathrm{EX}=A}\P\bra{W_{10T}^{\widetilde{\mathrm{EX}}}=B}\sum_{\substack{a\in A\\b\in B}}\frac1{k^2}\P\bra{A_{(10+K)T}^\mathrm{EX}\neq B_{(10+K)T}^{\widetilde{\mathrm{EX}}},\,F_{a,b},\,E^3_{a,b}}\nonumber \\
 	&\le \sum_{A,B\in\binom{V}{k}}\P\bra{U_{10T}^\mathrm{EX}=A}\P\bra{W_{10T}^{\widetilde{\mathrm{EX}}}=B}\sum_{\substack{a\in A\\b\in B}}\frac1{k^2}\,\frac{23}{25}\,\P\bra{\sigma_{a,b}(a^\ast)\neq a^\ast,\,|e_{a,b}|> 4}\nonumber\\
 	&\le \frac{2}{500}+\frac{23}{25|V|^2}\sum_{a,b\in V}\P\bra{\sigma_{a,b}(a^\ast)\neq a^\ast,\,|e_{a,b}|> 4},\label{eq:E3_bound}
 	\end{align}
 	where the last inequality is obtained in the same way as \eqref{eq:E2_bound}.
 	
 	Our fourth and final case to consider is $E^4_{a,b}$: on this event a simple case-by-case analysis (sketched in Appendix~\ref{Appendix3}) shows that as long as there are no other (already matched) particles on edge $e_{a,b}$ at time $\tau_{a,b}-$ (i.e. $|\hat I_{\tau_{a,b}-}(I_{10T}(A))\cap e_{a,b}| = 1$), there exists a bijection between permutations $\sigma_{a,b}$ and $\tilde\sigma_{a,b}$ such that $\tilde\sigma_{a,b}$ is a permutation with the same cycle structure as $\sigma_{a,b}$, and such that with probability at least $1/2$
 	 \[\sigma_{a,b}\big(\hat I_{\tau_{a,b}-}(I_{10T}(a))\big)=\tilde\sigma_{a,b}\big(\hat I_{\tau_{a,b}-}(\tilde I_{10T}(b))\big)\,.\] 
 	
 	That is, in this situation we are able to make the locations of all $k$ particles of $A_{(10+K)T}^\mathrm{EX}$ and $B_{(10+K)T}^{\widetilde{\mathrm{EX}}}$ agree with probability at least $1/2$:
 	\[ \P\bra{A_{(10+K)T}^\mathrm{EX}= B_{(10+K)T}^{\widetilde{\mathrm{EX}}},\,F_{a,b},\,E^4_{a,b}} \ge \frac12\,\P\bra{|\hat I_{\tau_{a,b}-}(I_{10T}(A))\cap e_{a,b}| = 1,\,F_{a,b},\,E^4_{a,b}} \,. \] 	
 	
 	We use a union bound to control the probability of the complement and write $c^\ast$ for $\hat I_{\tau_{a,b}-}(I_{10T}(c))$ and $b^\ast$ for $\hat I_{\tau_{a,b}-}(\tilde I_{10T}(b))$. We have 
 	\begin{align}
 	\sum_{A,B\in\binom{V}{k}}&\P\bra{U_{10T}^\mathrm{EX}=A}\P\bra{W_{10T}^{\widetilde{\mathrm{EX}}}=B}\sum_{\substack{a\in A\\b\in B}}\frac1{k^2}\P\bra{A_{(10+K)T}^\mathrm{EX}\neq B_{(10+K)T}^{\widetilde{\mathrm{EX}}},\,F_{a,b},\,E^4_{a,b}}\nonumber \\
 	&\le \sum_{A,B\in\binom{V}{k}}\P\bra{U_{10T}^\mathrm{EX}=A}\P\bra{W_{10T}^{\widetilde{\mathrm{EX}}}=B}\nonumber\\ &\qquad\qquad\cdot\sum_{\substack{a\in A\\b\in B}}\frac1{2k^2}\left(\P\bra{\,E^4_{a,b}} + \P\bra{|\hat I_{\tau_{a,b}-}(I_{10T}(A))\cap e_{a,b}| > 1,\,a^\ast\neq b^\ast,\,E^4_{a,b}} \right) \nonumber\\
 	&\le \sum_{A,B\in\binom{V}{k}}\P\bra{U_{10T}^\mathrm{EX}=A}\P\bra{W_{10T}^{\widetilde{\mathrm{EX}}}=B} \nonumber\\
 	&\qquad\qquad\cdot\sum_{\substack{a\in A\\b\in B}}\frac1{2k^2}\left(\P\bra{E^4_{a,b}} + 
 	\sum_{c\in A\setminus\{a\}}\P\bra{c^\ast\in e_{a,b},\,c^\ast\neq b^\ast,\,a^\ast\neq b^\ast,\,E^4_{a,b}}\right) \nonumber \\
 	&=\sum_{a,b\in V}\sum_{c\neq a}\frac1{2k^2}\left(\frac{\P\bra{E^4_{a,b}}}{k-1} + 
 	\P\bra{c^\ast\in e_{a,b},\,c^\ast\neq b^\ast,\,a^\ast\neq b^\ast,\,E^4_{a,b}}\right)\nonumber \\
 	&\qquad\qquad\cdot\sum_{\substack{A\supset\{a,c\}\\B\supset\{b\}}}\P\bra{U_{10T}^\mathrm{EX}=A}\P\bra{W_{10T}^{\widetilde{\mathrm{EX}}}=B}\,.\label{eq:nearly_there}
 	\end{align}
 	We now upper bound this using \eqref{eq:law_I_inverse} and \eqref{eq:tvint} (using the same method as in \eqref{eq:TV_fn_bound}). This gives the following upper bound for \eqref{eq:nearly_there}:
 	\begin{align*}
 	&\frac{2}{500}+\frac{k^2(k-1)}{|V|^2(|V|-1)}\sum_{a,b\in V}\sum_{c\neq a}\frac1{2k^2}\left(\frac{\P\bra{E^4_{a,b}}}{k-1} + 
 	\P\bra{c^\ast\in e_{a,b},\,c^\ast\neq b^\ast,\,a^\ast\neq b^\ast,\,E^4_{a,b}}\right)\\
 	&\le \frac{2}{500}+\frac{1}{2|V|^2}\sum_{a,b\in V}\P\bra{E^4_{a,b}} + 
 	\frac{k-1}{2|V|^2(|V|-1)}\sum_{a,b\in V}\sum_{c\neq a} 
 	\P\bra{c^\ast\in e_{a,b},\,c^\ast\neq b^\ast,\,a^\ast\neq b^\ast,\,E^4_{a,b}}\\
 	&\le \frac{2}{500}+\frac{1}{2|V|^2}\sum_{a,b\in V}\P\bra{E^4_{a,b}} + 
 	\frac{k-1}{|V|^2(|V|-1)}\sum_{a,b\in V}\P\bra{E^4_{a,b}}\,,
 	 	\end{align*}
 	since, on the event $E^4_{a,b}$, the size of edge $e_{a,b}$ is at most four and so on the event $\{a^\ast\neq b^\ast\}$ for any choice of $e_{a,b}$ there are only two possibilities for the value of $c$ (since $c^\ast\notin\{a^\ast,b^\ast\}\subset e_{a,b}\}$).
 	This gives
 	\begin{align}
 	\sum_{A,B\in\binom{V}{k}}&\P\bra{U_{10T}^\mathrm{EX}=A}\P\bra{W_{10T}^{\widetilde{\mathrm{EX}}}=B}\sum_{\substack{a\in A\\b\in B}}\frac1{k^2}\P\bra{A_{(10+K)T}^\mathrm{EX}\neq B_{(10+K)T}^{\widetilde{\mathrm{EX}}},\,F_{a,b},\,E^4_{a,b}}\nonumber \\
 	&\le \frac{2}{500}+\frac{1}{|V|^2}\left(\frac12+  
 	\frac{k-1}{|V|-1}\right) \sum_{a,b\in V}\P\bra{\sigma_{a,b}(a^*)\neq a^*,\,|e_{a,b}|\le 4}\,.\label{eq:E4_bound}
 	\end{align}
 	
 	\medskip
 	We now combine the bounds in \eqref{eq:tvUWfinal}, \eqref{eq:E1_bound}, \eqref{eq:E2_bound}, \eqref{eq:E3_bound} and \eqref{eq:E4_bound} to see that
 	\begin{align*}
 	\|\mathcal{L}[U_{(20+K)T}^\mathrm{EX}]-\mathcal{L}[W_{(20+K)T}^{\widetilde{\mathrm{EX}}}]\|\tv&\le \frac{7}{500}+\max_{a,b}\P\bra{E^1_{a,b}}+\frac1{|V|^2}\sum_{a,b}\P\bra{\sigma_{a,b}(a^*)=a^*}\\
 	&\phantom{\le}+\frac1{|V|^2}\max\left\{\frac{23}{25},\,\frac12 +\frac{k-1}{|V|-1}\right\}\sum_{a,b}\P\bra{\sigma_{a,b}(a^*)\neq a^*}.
 	\end{align*}
 	By assumption, $k\le |V|/2$ if $|V|<36$ and $k\in\{3,4\}$ if $|V|\ge 36$, and so 
 	\[ \max\left\{\frac{23}{25},\,\frac12 +\frac{k-1}{|V|-1}\right\} \le \frac{33}{34} \]
 	for all possible combinations of $k$ and $|V|$ being considered here. Combining this bound with that in Assumption \ref{assumpf}, we obtain:
 	\begin{align*}
 	 	\|\mathcal{L}[U_{(20+K)T}^\mathrm{EX}]-\mathcal{L}[W_{(20+K)T}^{\widetilde{\mathrm{EX}}}]\|\tv \le \frac{7}{500}+\max_{a,b}\P\bra{E^1_{a,b}}+\frac1{5}\left(1+4 \cdot\frac{33}{34}\right)\,.
 	\end{align*}
 	 	But since $\tau_{a,b}$ has the same distribution as $M^{\mathrm{RW}}(I_{10T}(a),\tilde I_{10T}(b))$, and $G$ is an easy hypergraph,
 	\begin{align*}
 	\max_{a,b}\P\bra{E^1_{a,b}}&=\max_{a,b}\P\bra{\tau_{a,b}>KT}\le \max_{a,b}\P\bra{M^\mathrm{RW}(a,b)>KT}\le \frac{1}{1000}
 	\end{align*} provided $K\ge 10^{10}$. Therefore,
 	\[
 	\|\mathcal{L}[U_{10^{11}T}^\mathrm{EX}]-\mathcal{L}[W_{10^{11}T}^{\widetilde{\mathrm{EX}}}]\|\tv\le \frac8{500}+\frac1{5}\left(1+4\cdot\frac{33}{34}\right)<\frac{497}{500}.
 	\]
 	Finally, by submultiplicativity of the function 
 	\[\bar d(t):= \max_{U,W\in \binom{V}{k}} \|\mathcal{L}[U_t^{\mathrm{EX}}] - \mathcal{L}[W_t^{\widetilde{\mathrm{EX}}}]\|\tv \]
 	(see e.g. Lemma 4.12 of \cite{Levin2008}), we deduce that
 	\[
 	\|\mathcal{L}[U_{10^{14}\log(1/\varepsilon)T}^\mathrm{EX}]-\mathcal{L}[W_{10^{14}\log(1/\varepsilon)T}^{\widetilde{\mathrm{EX}}}]\|\tv<\left(\frac{497}{500}\right)^{1000\log(1/\eps)}<\eps\,,
 	\]
 	and so the statement of Lemma~\ref{L:ktok-1} is proved upon taking $\kappa=10^{14}$.
 \end{proof}

%\opt{noarxivVersion} 
%{
%	The proof of the following lemma can be found in the extended arXiv version of this paper \citep{Connor-Pymar-2016}.
%}
%\opt{arxivVersion}
%{
%	The proof of the following lemma can be found in Appendix~\ref{Appendix}.
%}

% \begin{lemma}\label{L:matchupith}Recall the definition of the events $E^3_{a,b}$ and $F_{a,b}$ in the proof of Lemma~\ref{L:ktok-1}. Conditioned on these events, the processes $(\hat I_t(I_{10T}(A)))_{t\ge0}$ and $(\hat I_t(\tilde I_{10T}(B)))_{t\ge0}$ can be coupled such that with probability at least $1/30$ the positions of all $k$ particles agree at time $\tau_{a,b}$.
% \end{lemma}
%  \begin{proof}[Proof of Lemma~\ref{L:matchupith}]
% 	Recall from the proof of Lemma~\ref{L:ktok-1}, that 	
% 	\[
% 	\tau_{a,b}:=\inf\{t\ge 0:\, \hat I_t(I_{10T}(a)),\,\hat I_t(\tilde I_{10T}(b))\in e_{\Lambda[0,t]} \}\,,
% 	\]
% 	where $\Lambda$ is a Poisson process of rate $2|E|$. We condition on the event $E^3_{a,b}$ occurring, where we recall that
% 	\[ E^3_{a,b} = \{\tau_{a,b}\le KT,\,\sigma_{a,b}(a^*)\neq a^*,\,|e_{a,b}|> 4\} \,,\]
% 	where $a^*:=\hat I_{\tau_{a,b}-}(I_{10T}(a))$. Recall also that we define $b^*=\hat I_{\tau_{a,b}-}(\tilde I_{10T}(b))$.
% 	
% 	We note that on event $F_{a,b}$, we have $\hat I_{\tau_{a,b}-}(I_{10T}(A\setminus\{a\}))=\hat I_{ \tau_{a,b}-}(\tilde I_{10T}(B\setminus\{b\}))$.
% 	\end{proof}
 	
     \begin{proof}[Proof of Lemma \ref{L:4to2} for easy
      hypergraphs]We simply apply Lemma \ref{L:ktok-1} for the case $|V|\ge36$ twice, first with $k=4$ and then with $k=3$ (and take $\varepsilon=1/4$ both times). We deduce that 
      \[
      T_{\mathrm{EX}(4,f,G)}(1/4)\le\kappa^2(\log4)^2T_{\mathrm{EX}(2,f,G)}(1/4),
      \]
      and so it suffices to take $\lambda=\kappa^2(\log4)^2$.
\end{proof}

\subsection{From 4-particle exclusion to 2-particle exclusion:
  non-easy hypergraphs}
We begin with a result showing that for non-easy hypergraphs the
average meeting time for two independent random walkers is unlikely to
be quick. Intuitively, this follows from the following
observations. We know there exists a pair of vertices such that random
walkers started from these two states likely take a long time to
meet. If we look at where these two walkers are at time of order
$T_{\mathrm{RW}(f,G)}(1/4)$, they will be close to uniform. Hence,
starting random walkers from a uniform pair we see that they will
likely still take a long time to meet. The proofs of
Lemmas~\ref{L:averagemeet} and \ref{L:MRWxbound}, and of
Proposition~\ref{P:couplingIPwithRW} are (somewhat technical)
extensions of corresponding results of \cite{Oliveira2013a}, and can be found in 
\opt{noarxivVersion} 
{
\citep{Connor-Pymar-2016}. Throughout this section we let $T$ denote $T_{\mathrm{EX}(2,f,G)}(1/4)$.
}
\opt{arxivVersion} 
{
\hspace{-2.5mm}Appendix~\ref{Appendix}.
}
\begin{lemma}\label{L:averagemeet}For every non-easy hypergraph we have
  \[
  \sum_{{\bf u}\in V^2}\frac{\P\big[M^\mathrm{RW}({\bf u})\le20
    T\big]}{|V|^2}\le \frac1{1000}.
  \]
\end{lemma}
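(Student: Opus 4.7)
My plan is to argue by contrapositive: assume that $\sum_{{\bf u}\in V^2}\P[M^{\mathrm{RW}}({\bf u})\le 20T]/|V|^2 > 1/1000$ and derive that $G$ must be easy, contradicting the non-easy hypothesis. The underlying intuition is that an averaged probability of a fast meeting --- even one as small as $1/1000$ --- can be boosted into a near-certain meeting within the much larger $10^{10}T$ budget by iterating many (near-)independent trials, each consisting of a short mixing window followed by a $20T$ meeting-check window.

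First, using the contraction principle $T_{\mathrm{RW}(f,G)}(1/4)\le T$ together with Proposition~\ref{P:2and4to1indep}, I would fix $T_0 = 13T$, so that for every starting pair ${\bf z}\in V^2$ the law $\mu_{\bf z}$ of two independent walkers at time $T_0$ (started from ${\bf z}$) satisfies $\|\mu_{\bf z}-\pi\|\tv\le 2^{-12}\le 1/4000$, where $\pi$ denotes the uniform distribution on $V^2$. The slack $1/4000$ is taken substantially smaller than the $1/1000$ in the hypothesis so that it does not swamp the signal.

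Next, fix any ${\bf y}\in V^2$ and abbreviate $\tau = M^{\mathrm{RW}}$. For any ${\bf z}\in V^2$, the strong Markov property at time $T_0$ tells us that on $\{\tau > T_0\}$ the residual meeting time from ${\bf Y}_{T_0}$ has the law of $M^{\mathrm{RW}}({\bf Y}_{T_0})$, so
\begin{align*}
\P_{\bf z}[\tau > T_0 + 20T] &\le \sum_{\bf u}\mu_{\bf z}({\bf u})\,\P_{\bf u}[M^{\mathrm{RW}} > 20T] \\
&\le \sum_{\bf u}\pi({\bf u})\,\P_{\bf u}[M^{\mathrm{RW}} > 20T] + \|\mu_{\bf z}-\pi\|\tv \\
&< 1 - \tfrac{1}{1000} + \tfrac{1}{4000} = 1 - \tfrac{3}{4000},
\end{align*}
where the last step uses the contrapositive assumption. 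Setting $s_j = j(T_0 + 20T)$ and $p_j = \P_{\bf y}[\tau > s_j]$, a further application of the strong Markov property at each $s_{j-1}$ (conditional on $\{\tau > s_{j-1}\}$) gives $p_j \le (1-3/4000)\,p_{j-1}$, hence $p_k\le e^{-3k/4000}$. Taking $k = \lceil(4000/3)\ln 1000\rceil$, which is of order $10^4$, drives $p_k$ below $1/1000$, while $s_k \le 33kT$ stays of order $T$ and in particular much below $10^{10}T$. Therefore $\P_{\bf y}[\tau > 10^{10}T]\le p_k\le 1/1000$ uniformly in ${\bf y}$, meaning $G$ is easy --- contradicting the non-easy hypothesis.

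The only place that needs care is the bookkeeping of numerical constants: $T_0$ must be chosen large enough that $\|\mu_{\bf z}-\pi\|\tv$ is comfortably below $1/1000$ (so the single-step bound above produces a contraction factor bounded away from $1$), yet small enough that the number of iterations $k$ needed to drive $p_k$ below $1/1000$ still leaves $s_k \ll 10^{10}T$. The enormous gap between $10^{10}T$ and the $O(T)$ scale of a single trial makes this essentially free, so I do not foresee any genuine obstacle beyond this accounting.
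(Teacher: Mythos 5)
Your proof is correct. It is the contrapositive of the direction the paper proves, and it uses the same two ingredients --- the Markov-property submultiplicativity of the meeting-time tail, and the mixing bound for two independent walkers coming from Proposition~\ref{P:2and4to1indep} together with \eqref{eq:tvint} --- but assembles them in a different order. The paper iterates the supremum $\sup_{\bf y}\P[M^{\mathrm{RW}}({\bf y})>40T]$ over $10^{10}/40$ rounds to deduce that some starting pair has tail probability at least $(1/1000)^{40\cdot 10^{-10}}>1-10^{-7}$, and then applies the mixing/averaging step once (conditioning on the positions at time $20T$ and paying a $2^{-20}$ total-variation cost). You instead insert the mixing step \emph{inside} each round: mix for $T_0=13T$ to get within $2^{-12}<1/4000$ of $\mathrm{Unif}(V^2)$, pay that cost once per round, obtain a per-round contraction factor of $1-3/4000$ bounded away from $1$, and iterate $O(10^4)$ times. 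Both are valid; your version reads as a cleaner geometric-decay argument but requires tracking the round count against the $10^{10}T$ budget, while the paper's requires the Taylor estimate $(1/1000)^{40\cdot 10^{-10}}>1-10^{-7}$. Two small presentational points: the time $s_{j-1}$ is deterministic, so the ordinary Markov property (not the strong Markov property) suffices; and it is worth saying explicitly that the bound on $\P_{\bf z}[\tau>T_0+20T]$ uses $\indic{\tau>T_0}\le1$, since $\mu_{\bf z}$ is the unconditional law of the walkers at time $T_0$ rather than the law conditioned on $\{\tau>T_0\}$.
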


Given a $k$-tuple ${\bf z}\in (V)_{k}$, we once again write ${\bf O}({\bf
  z}):=\{{\bf z}(1),\ldots,{\bf z}(k)\}$ for the (unordered) set of
coordinates of ${\bf z}$. For ${\bf x}\in V^4$, let ${\bf x}^{\mathrm{RW}}_t$ be a realisation
of RW$(4,f,G)$ with ${\bf x}^{\mathrm{RW}}_0={\bf x}$. Denote by
$\Lambda^1,\Lambda^2,\Lambda^3,\Lambda^4$ the (independent) Poisson processes used to generate the
edge-ringing times for the four random walkers, and let $\{e^1_n\}_{n\in\mathbb{N}},
\{e^2_n\}_{n\in\mathbb{N}},  \{e^3_n\}_{n\in\mathbb{N}},  \{e^4_n\}_{n\in\mathbb{N}}$ be the four sequences of edge-choices (all as in Section \ref{S:graphical}).

We now define $\bar M^{\mathrm{RW}}({\bf O}({\bf x}))$ to be the first time
\emph{any two} of ${\bf x}^{\mathrm{RW}}_t(1)$, ${\bf
  x}^{\mathrm{RW}}_t(2)$, ${\bf x}^{\mathrm{RW}}_t(3)$, ${\bf
  x}^{\mathrm{RW}}_t(4)$ first arrive onto the same edge which then
rings for one of them. Formally,
\begin{align*}
\bar M^\mathrm{RW}({\bf O}({\bf x})):=\inf\Big\{&t\ge0:\,\exists\,1\le i<j\le
4,\,\, e\in\{e^i_{\Lambda^i[0,t]},e^j_{\Lambda^j[0,t]}\}\\&\text{ with }{\bf
  x}^\mathrm{RW}_{t-}(i),{\bf x}^\mathrm{RW}_{t-}(j)\in
e\Big\}.
\end{align*}

\begin{lemma}\label{L:MRWxbound}Let ${\bf x}\in (V)_4$. Then for any
  $\eps\in(0,1)$,\[ \P\bra{\bar M^\mathrm{RW}({\bf O}({\bf
      x})_{20T}^\mathrm{EX})\le 20T}\le
  12(\eps+\eps^{-1}2^{-20})+25(1+\eps)\sum_{{\bf u}\in
    V^2}\frac{\P\bra{M^\mathrm{RW}({\bf u})\le20T}}{|V|^2}\]
\end{lemma}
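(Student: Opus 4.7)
My strategy is a union bound over pairs combined with the contraction principle: the (unordered) pairwise marginal of the $4$-particle exclusion process at time $20T$ coincides with the $2$-particle exclusion process at time $20T$, and the latter is close to uniform by our choice of $20T$.

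\emph{Step 1 (Pairwise mixing).} Applying Proposition~\ref{P:1to1eps} with $\varepsilon_1=1/4$ and $\varepsilon_2=2^{-20}$ gives $T_{\mathrm{EX}(2,f,G)}(2^{-20})\le 20T$. Hence, for any initial $2$-subset, the $2$-particle exclusion distribution at time $20T$ is within total variation distance $2^{-20}$ of the uniform distribution $\pi^{*}$ on $\binom{V}{2}$.

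\emph{Step 2 (Labelling and contraction).} Realise ${\bf O}({\bf x})^{\mathrm{EX}}_{20T}$ via the $4$-particle interchange process $(I_t({\bf x}(i)))_{i=1}^4$ from the graphical construction in Section~\ref{S:graphical}. For any pair $1\le i<j\le 4$, the unordered pair $\{I_{20T}({\bf x}(i)),I_{20T}({\bf x}(j))\}$ has, in distribution, exactly the law of the $2$-particle exclusion process at time $20T$ started from $\{{\bf x}(i),{\bf x}(j)\}$; call this law $\nu_{ij}$. By Step~1, $\|\nu_{ij}-\pi^{*}\|_{\mathrm{TV}}\le 2^{-20}$.

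\emph{Step 3 (Union bound over the $\binom{4}{2}=6$ pairs).} Condition on the state of the $4$-particle exclusion at time $20T$ and use independence of the $4$ walkers in $\bar M^{\mathrm{RW}}$. With $q({\bf u}):=\P[M^{\mathrm{RW}}({\bf u})\le 20T]$ (symmetric in its two coordinates, so well-defined as a function $q^{\mathrm{sym}}$ on $\binom{V}{2}$), the union bound gives
\[
\P\bigl[\bar M^{\mathrm{RW}}({\bf O}({\bf x})^{\mathrm{EX}}_{20T})\le 20T\bigr]\le \sum_{1\le i<j\le 4}\E_{\nu_{ij}}\bigl[q^{\mathrm{sym}}\bigr].
\]

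\emph{Step 4 ($\varepsilon$-smoothing).} For each pair, set $B_{\varepsilon}:=\{A\in\binom{V}{2}:\nu_{ij}(A)>(1+\varepsilon)\pi^{*}(A)\}$. Then $\nu_{ij}(B_{\varepsilon})-\pi^{*}(B_{\varepsilon})\le 2^{-20}$ while on $B_{\varepsilon}$ we also have $\nu_{ij}(A)-\pi^{*}(A)>\varepsilon\pi^{*}(A)$, so $\pi^{*}(B_{\varepsilon})\le\varepsilon^{-1}2^{-20}$ and $\nu_{ij}(B_{\varepsilon})\le (1+\varepsilon^{-1})2^{-20}$. Splitting the expectation along $B_{\varepsilon}$ and its complement yields
\[
\E_{\nu_{ij}}\bigl[q^{\mathrm{sym}}\bigr]\le (1+\varepsilon)\E_{\pi^{*}}\bigl[q^{\mathrm{sym}}\bigr]+\nu_{ij}(B_{\varepsilon}).
\]
Finally, passing from $\pi^{*}$ on $(V)_2$ to uniform measure on $V^2$ costs a factor of $|V|/(|V|-1)$, since
\[
\E_{\pi^{*}}\bigl[q^{\mathrm{sym}}\bigr]=\frac{1}{|V|(|V|-1)}\sum_{{\bf u}\in(V)_2}q({\bf u})\le\frac{|V|}{|V|-1}\sum_{{\bf u}\in V^2}\frac{q({\bf u})}{|V|^{2}},
\]
which for $|V|\ge 36$ is at most $\tfrac{36}{35}$ times the target sum; adding in the $\varepsilon$-smoothing error and summing over the six pairs gives a bound of the claimed form $12(\varepsilon+\varepsilon^{-1}2^{-20})+25(1+\varepsilon)\sum_{{\bf u}\in V^2}q({\bf u})/|V|^{2}$ after absorbing absolute constants.

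The main technical point is Step~2, verifying that the \emph{unordered} pairwise marginal of the $4$-particle exclusion is genuinely a $2$-particle exclusion (the graphical construction makes this transparent, because the same random permutation $I_{20T}$ is applied to the two initial vertices); and then Step~4, where the $\varepsilon$-smoothing must be done carefully so that both the multiplicative error $(1+\varepsilon)$ in front of the main term and the additive errors $\varepsilon^{-1}2^{-20}$ from the bad set appear with the correct dependence.
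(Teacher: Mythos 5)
Your proof is correct and takes essentially the same route as the paper: a union bound over the $\binom{4}{2}=6$ pairs, identification of the pairwise marginal of the 4-particle exclusion as a 2-particle exclusion via the graphical construction, and an $\varepsilon$-smoothing comparison of the time-$20T$ pair law to uniform using $\|\cdot\|_{\mathrm{TV}}\le 2^{-20}$. The only small differences are cosmetic: the paper splits on the two-sided set $\{|\nu(A)-\pi^*(A)|\le\varepsilon\pi^*(A)\}$ and infers $\nu(\mathrm{Bad})\le\varepsilon+\varepsilon^{-1}2^{-19}$ from the lower side of that band, whereas you use the one-sided envelope $\{\nu(A)>(1+\varepsilon)\pi^*(A)\}$ and bound its $\nu$-mass directly; and your invocation of $|V|\ge 36$ when passing from $\pi^*$ to uniform on $V^2$ is unnecessary, since $|V|/(|V|-1)\le 2$ already gives $6\cdot 2=12\le 25$ for the leading constant.
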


    Next, we provide a bound which relates the total-variation
    distance between two 4-particle exclusion processes to
    the probability that any two of four independent walkers have
    `met'.
    \begin{prop}\label{P:couplingIPwithRW}
      For any ${\bf x}\in(V)_4$ and $s\ge0$:
      \[
      \|\cL[{\bf O}({\bf x}_s^\mathrm{RW})]-\cL[{\bf O}({\bf x})_s^\mathrm{EX}]\|\tv\le
      \P\bra{\bar M^\mathrm{RW}({\bf O}({\bf x}))\le s}.
      \]
    \end{prop}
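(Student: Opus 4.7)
The plan is to construct, on a common probability space, the four independent walkers $(\mathbf{x}_t^{\mathrm{RW}})_{t\ge 0}$ starting from $\mathbf{x}$ together with the exclusion process $(\mathbf{O}(\mathbf{x})_t^{\mathrm{EX}})_{t\ge 0}$ starting from $\mathbf{O}(\mathbf{x})$, coupled so that $\mathbf{O}(\mathbf{x}_t^{\mathrm{RW}}) = \mathbf{O}(\mathbf{x})_t^{\mathrm{EX}}$ for every $t < \bar M^{\mathrm{RW}}(\mathbf{O}(\mathbf{x}))$. The proposition then follows from the coupling characterisation \eqref{eq:tvcoupling} of total variation, since on the coupled space
\[
\|\cL[\mathbf{O}(\mathbf{x}_s^{\mathrm{RW}})]-\cL[\mathbf{O}(\mathbf{x})_s^{\mathrm{EX}}]\|_{\mathrm{TV}}\le \P\bra{\mathbf{O}(\mathbf{x}_s^{\mathrm{RW}})\neq \mathbf{O}(\mathbf{x})_s^{\mathrm{EX}}}\le \P\bra{\bar M^{\mathrm{RW}}(\mathbf{O}(\mathbf{x}))\le s}.
\]

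The cleanest way to build the coupling is to pass through an auxiliary interchange process. I would define an IP$(4,f,G)$ process $(\mathbf{x}_t^{\mathrm{IP}})_{t\ge 0}$ started from $\mathbf{x}$ on the same probability space as the walkers, and read off the exclusion process as $\mathbf{O}(\mathbf{x})_t^{\mathrm{EX}}:=\mathbf{O}(\mathbf{x}_t^{\mathrm{IP}})$. The task then reduces to coupling $\mathbf{x}^{\mathrm{RW}}$ with $\mathbf{x}^{\mathrm{IP}}$ so that $\mathbf{x}_t^{\mathrm{RW}}=\mathbf{x}_t^{\mathrm{IP}}$ as ordered tuples for $t<\bar M^{\mathrm{RW}}$, with the two processes extended via fresh independent randomness past this stopping time to preserve their correct marginal laws. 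This coincidence is possible because prior to $\bar M^{\mathrm{RW}}$ no clock tick in the interchange process finds more than one particle on the ringing edge, so every effective transition moves a single particle; the rate at which particle $i$ at position $u$ moves to $v$ is $\sum_{e\ni u}\sum_{\sigma:\sigma(u)=v} f_e(\sigma)$, which matches exactly the corresponding RW$(f,G)$ rate, and the identity of the mover and its destination also have matching conditional distributions. Consequently the embedded chain of effective single-particle moves of $\mathbf{x}^{\mathrm{IP}}$ before $\bar M^{\mathrm{RW}}$ agrees in law with that of four independent walkers run up to their own meeting time, and the two can be realised so as to coincide.

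The main technical obstacle is reconciling the two natural graphical constructions of Section~\ref{S:graphical}, namely four independent Poisson clocks of rate $|E|$ for RW$(4,f,G)$ against a single clock of rate $|E|$ for IP$(4,f,G)$, while retaining both marginal laws. I would handle this by supplementing the walker-driven construction with an auxiliary independent Poisson clock that simulates the interchange transitions which either produce null updates of the configuration or permute two or more particles on a common edge; crucially, such multi-particle edge rings cannot occur in the coupled process before $\bar M^{\mathrm{RW}}$, so the auxiliary clock may be kept inert up to that time and switched on afterwards to deliver the correct IP$(4,f,G)$ law. The strong Markov property applied at $\bar M^{\mathrm{RW}}$ then glues the two processes on independently past the meeting without disturbing their pre-meeting coincidence, and the rate-matching computation above ensures that the coupled walkers and interchange process have the correct marginal distributions. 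Once this is in place, the total-variation bound follows at once from the coupling inequality.
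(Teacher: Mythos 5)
Your high-level plan---pass through $\mathrm{IP}(4,f,G)$, couple it with $\mathrm{RW}(4,f,G)$ so that they agree up to the meeting time, and invoke the coupling inequality---is the same as the paper's, and your observation that the single-particle transition rates coincide is the right core fact. However, the construction has a gap where you assert that ``prior to $\bar M^{\mathrm{RW}}$ no clock tick in the interchange process finds more than one particle on the ringing edge'' and on that basis keep the auxiliary clock inert. This is circular: $\bar M^{\mathrm{RW}}$ is a stopping time built from the four independent clocks $\Lambda^1,\dots,\Lambda^4$ of $\mathrm{RW}(4,f,G)$, while $\mathrm{IP}(4,f,G)$ is driven by its own single, independent clock. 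Before $\bar M^{\mathrm{RW}}$ two particles can already occupy a common edge $e$ (no walker clock has yet rung $e$), and the genuine interchange process would ring $e$ at rate $1$ and move both. Suppressing those rings breaks the $\mathrm{IP}$ marginal: when particles $i$ and $j$ share an edge $e$, $\bar M^{\mathrm{RW}}$ is triggered through $e$ at rate $2$ (once via $\Lambda^i$, once via $\Lambda^j$), whereas $e$ rings in $\mathrm{IP}$ at rate $1$, so holding the coupled $\mathrm{IP}$ component fixed until $\bar M^{\mathrm{RW}}$ gives it the wrong holding-time law. Conversely, allowing the auxiliary clock to run lets $\mathrm{IP}$ and $\mathrm{RW}$ diverge strictly before $\bar M^{\mathrm{RW}}$. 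Your rate-matching covers single-particle moves only; multi-particle rings occur at genuinely different rates in the two processes and cannot be wished away.

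The paper sidesteps the graphical-construction mismatch by writing down the joint generator directly: on the diagonal, for each edge $e$ with $|e\cap\{{\bf x}(1),\dots,{\bf x}(4)\}|>1$ and each $\sigma_e\in\cS_e$, rule~2.(b) introduces four transitions at rate $f_e(\sigma_e)$, the first applying $\sigma_e$ to the entire $\mathrm{IP}$ component while moving only walker~1 in the $\mathrm{RW}$ component, the other three moving walkers~2,3,4 in $\mathrm{RW}$ and leaving $\mathrm{IP}$ fixed. Multi-particle rings thus reach $\mathrm{IP}$ at the correct rate, the joint chain leaves the diagonal when one occurs, and both marginal generators are checked by inspection; the divergence time is then compared with $\bar M^{\mathrm{RW}}$ directly. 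Your sketch needs exactly this mechanism---letting $\mathrm{IP}$ and $\mathrm{RW}$ split via a multi-particle ring rather than forbidding such rings before $\bar M^{\mathrm{RW}}$---to produce a coupling with the right marginals.
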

    
    \begin{lemma}\label{L:tvtomeet}For every non-easy hypergraph $G$
       and any two realisations of
      \textnormal{EX}$(4,f,G)$, denoted $\{A_t^\mathrm{EX}\}$
      and $\{B_t^\mathrm{EX}\}$, we have
      \[
      \|\cL[A_{40T}^{\mathrm{EX}}]-\cL[B_{40
        T}^{\mathrm{EX}}]\|\tv\le\P\bra{\bar M^{\mathrm{RW}}(A_{20T}^{\mathrm{EX}})\le 20T}+\P\bra{\bar
        M^{\mathrm{RW}}(B_{20T}^{\mathrm{EX}})\le 20T}+2^{-18}.
      \]
    \end{lemma}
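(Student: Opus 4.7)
The plan is to compare $A_{40T}^{\mathrm{EX}}$ and $B_{40T}^{\mathrm{EX}}$ through two intermediate laws, one built from each process: we let the two exclusion processes evolve independently on $[0,20T]$, and on $[20T,40T]$ we replace each exclusion dynamics by $4$ independent random walks started from the corresponding state at time $20T$. The meeting-time probabilities come from the cost of this replacement (via Proposition~\ref{P:couplingIPwithRW}), while the remaining TV distance between the two $4$-particle RW continuations is controlled by the mixing time of $\mathrm{RW}(4,f,G)$.

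Concretely, conditional on $A_{20T}^{\mathrm{EX}}$ choose any ordering ${\bf a}\in (V)_4$ with ${\bf O}({\bf a})=A_{20T}^{\mathrm{EX}}$, and let $({\bf a}_s^{\mathrm{RW}})_{s\ge 0}$ denote $4$ independent random walks started at ${\bf a}$; define $\bf b$ analogously for $B_{20T}^{\mathrm{EX}}$. Applying Proposition~\ref{P:couplingIPwithRW} conditionally on the time-$20T$ state, using the Markov property at time $20T$, and averaging via the fact that the TV distance of mixtures is bounded by the mixture of the TV distances, gives
\[
\|\cL[A_{40T}^{\mathrm{EX}}]-\cL[{\bf O}({\bf a}_{20T}^{\mathrm{RW}})]\|\tv \le \P\bra{\bar M^{\mathrm{RW}}(A_{20T}^{\mathrm{EX}})\le 20T},
\]
and analogously for $B$. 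For the middle piece, the contraction principle yields $T_{\mathrm{RW}(f,G)}(1/4)\le T_{\mathrm{EX}(2,f,G)}(1/4)=T$, so Proposition~\ref{P:2and4to1indep} (with $m=2$, $n=18$) gives $T_{\mathrm{RW}(4,f,G)}(2^{-18})\le 20T$. Hence for any fixed starting tuple, the law of $4$ independent walkers at time $20T$ is within $2^{-18}$ of the uniform distribution on $V^4$. Averaging over the (random) starting tuple, projecting to unordered images (which cannot increase TV), and applying the triangle inequality through the projected uniform law bounds $\|\cL[{\bf O}({\bf a}_{20T}^{\mathrm{RW}})]-\cL[{\bf O}({\bf b}_{20T}^{\mathrm{RW}})]\|\tv$ by a constant of order $2^{-18}$.

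A final triangle inequality combines the three pieces and yields the stated bound. The only genuinely delicate step is the second one: Proposition~\ref{P:couplingIPwithRW} is stated for a deterministic initial tuple ${\bf x}\in(V)_4$, but we need to apply it to the random state $A_{20T}^{\mathrm{EX}}$; this is handled by conditioning on $A_{20T}^{\mathrm{EX}}$, choosing an ordering measurably, and then invoking convexity of the TV distance. All other ingredients (the contraction principle, the product-chain mixing bound, and triangle inequalities) are routine.
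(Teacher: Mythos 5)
Your proposal is correct and follows essentially the same route as the paper's proof: both decompose $[0,40T]$ at time $20T$, replace the exclusion dynamics on $[20T,40T]$ by four independent random walks (paying the meeting-time probabilities via Proposition~\ref{P:couplingIPwithRW}, conditionally on the time-$20T$ state and then averaging by convexity), and control the remaining piece via Proposition~\ref{P:2and4to1indep} and the contraction principle. The only thing to watch is that the product-mixing bound gives the $4$-particle random walk within $2^{-18}$ of uniform, so the triangle inequality through uniform gives $2\cdot 2^{-18}$ for the middle term — the same looseness is present in the paper's argument and is immaterial downstream, but in a polished writeup you should either adjust the constant or use $n=19$ when $T_{\mathrm{RW}}(1/4)$ is small enough.
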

    \begin{proof}
      By Proposition \ref{P:couplingIPwithRW} and the triangle
      inequality for total-variation, for any ${\bf u},{\bf v}\in
      (V)_4$,
      \begin{align}\label{eq:uvTV}
        \|\cL[{\bf O}({\bf u})_{20T}^{\mathrm{EX}}]-\cL[{\bf O}({\bf
          v})_{20T}^{\mathrm{EX}}]\|\tv&\le \P\bra{\bar
          M^{\mathrm{RW}}({\bf O}({\bf u}))\le 20T}+\P\bra{\bar
          M^{\mathrm{RW}}({\bf O}({\bf v}))\le 20T}\notag\\&\phantom{=}+\|\cL[{\bf O}({\bf
          u}_{20T}^{\mathrm{RW}})]-\cL[{\bf O}({\bf
          v}_{20T}^{\mathrm{RW}})]\|\tv.
      \end{align}

      An identical argument to that used for equation \eqref{eq:tvUW} tells us that
      \[
      \|\cL[A_{40T}^\mathrm{EX}]-\cL[B_{40T}^\mathrm{EX}]\|\tv \le
      \E\left[\|\mathcal{L}[A_{40T}^\mathrm{EX}\,|\,A_{20T}^\mathrm{EX}]-\mathcal{L}[B_{40T}^\mathrm{EX}\,|\,B_{20T}^\mathrm{EX}]\|\tv\right] \,.
      \]
      Applying the inequality in \eqref{eq:uvTV}, with any ${\bf u},{\bf v}$ satisfying
      ${\bf O}({\bf u})=A_{20T}^\mathrm{EX}$ and ${\bf O}({\bf v})=B_{20T}^\mathrm{EX}$, gives
      \begin{align*}
        \|\cL[A_{40T}^\mathrm{EX}]-\cL[B_{40T}^\mathrm{EX}]\|\tv&\le \E\big[\P\bra{\bar
          M^{\mathrm{RW}}(A_{20T}^\mathrm{EX})\le 20T|\,A_{20T}^\mathrm{EX}}\big]\\&\phantom{=}+\E\big[\P\bra{\bar M^{\mathrm{RW}}(B_{20T}^\mathrm{EX})\le 20T|\,B_{20T}^\mathrm{EX}}\big]\\&\phantom{=}+\sup_{{\bf
            u},{\bf v}\in(V)_4}\|\cL[{\bf
          u}_{20T}^{\mathrm{RW}}]-\cL[{\bf
          v}_{20T}^{\mathrm{RW}}]\|\tv\,.
      \end{align*}
      Using Proposition \ref{P:2and4to1indep} and the contraction principle for the third term on
      the right-hand side gives the desired result.
          \end{proof}

    We are now ready to prove the main result of this subsection.

    \begin{proof}[Proof of Lemma \ref{L:4to2} for non-easy
      hypergraphs] We in fact show that for any two realisations of
      \textnormal{EX}$(4,f,G)$, denoted $\{A_t^\mathrm{EX}\}$
      and $\{B_t^\mathrm{EX}\}$, we have
      \[
      \|\cL[A_{40 T}^{\mathrm{EX}}]-\cL[B_{40
        T}^{\mathrm{EX}}]\|\tv\le1/4.
      \]
      Combining Lemmas \ref{L:MRWxbound} and \ref{L:tvtomeet} we have
      that for every $\eps\in(0,1)$,
      \[
      \|\cL[A_{40 T}^{\mathrm{EX}}]-\cL[B_{40
        T}^{\mathrm{EX}}]\|\tv\le
      24(\eps+\eps^{-1}2^{-20})+50(1+\eps)\sum_{{\bf u}\in
        V^2}\frac{\P\bra{M^\mathrm{RW}({\bf u})\le20T}}{|V|^2} +
      2^{-18}.
      \]
      Now by Lemma \ref{L:averagemeet}, this becomes
      \[
      \|\cL[A_{40 T}^{\mathrm{EX}}]-\cL[B_{40
        T}^{\mathrm{EX}}]\|\tv\le
      24(\eps+\eps^{-1}2^{-20})+\frac{50(1+\eps)}{1000}+ 2^{-18}.
      \]Setting $\eps=10^{-3}$ completes the proof.
     
    \end{proof}

\section{From $k$-particle exclusion to 2-particle exclusion for small graphs}\label{S:smallV}

We now prove Lemma \ref{L:smallV}. We begin by showing that any hypergraph $G$ with $|V|<36$ satisfies
\begin{align}\label{smallVeasy}
 \sup_{{\bf y}\in V^2}\P\bra{M^{\mathrm{RW}}({\bf
    y})>10^{10}T_{\mathrm{RW}(f,G)}(1/4)}\le 1/1000,
\end{align}
i.e. the hypergraph $G$ is \emph{easy}.
Indeed, by Proposition \ref{P:AldousFill}, for any $t\ge 2T_{\mathrm{RW}(f,G)}(\varepsilon)$,
\[
\sup_{{\bf y}\in V^2}\P\bra{M^{\mathrm{RW}}({\bf
    y})<t}\ge \frac{(1-2\varepsilon)^2}{|V|}\ge \frac{(1-2\varepsilon)^2}{36},
\]
and so 
\[
\sup_{{\bf y}\in V^2}\P\bra{M^{\mathrm{RW}}({\bf
    y})>2000 T_{\mathrm{RW}(f,G)}(1/4)}\le 1/1000,
\]
which certainly implies \eqref{smallVeasy}. Since $G$ is easy, we can apply Lemma \ref{L:ktok-1} multiple times to deduce that 
\[
T_{\mathrm{EX}(k,f,G)}(\varepsilon)\le \kappa^{k-2}(\log(1/4))^{k-3}\log(1/\varepsilon)T_{\mathrm{EX}(2,f,G)}(1/4).
\]
However, since $|V|<36$ and $k\le |V|/2$ the statement of the proof is complete taking $C=\kappa^{15}(\log(1/4))^{14}$.
\section{The chameleon process}\label{S:cham}
Our aim in this section is to construct a continuous-time Markov process which satisfies the properties of $(M_t)_{t\ge0}$ outlined in Lemma \ref{L:existence}. We will call this process the \emph{chameleon process}. In Section \ref{S:chamhasprops} we will prove Lemma \ref{L:existence} by demonstrating that the chameleon process does indeed have the desired properties.

The chameleon process was originally constructed (in a different form but to serve a similar purpose) by
\citet{Morris2006a}, and then adapted by \citet{Oliveira2013a} to
analyse the mixing time of the $k$-particle interchange process on a graph (as
opposed to on a hypergraph, as we consider here). It is built on top of an underlying
interchange process, with the aim of helping to describe the
distribution of the location of the $k$th particle in this process,
conditional on the locations of the $k-1$ other particles.

Unlike in a $k$-particle interchange process which always has $k$ particles, the chameleon process has $|V|$ particles (one at each vertex), although not all
particles are distinguishable from each other. In addition, each
particle has an associated \emph{colour}: one of black, red, pink and
white (which correspond to the processes ${\bf{z}}_t^C,\,R_t,\,P_t,\,W_t$ respectively, appearing in the statement of Lemma~\ref{L:existence}). 
The movement of particles in the chameleon process follows that
of the underlying interchange process in the sense that the locations
of particles in both processes are updated using the same functions
$I$ as described in the graphical construction of
Section~\ref{S:graphical}. At some of the updates of the underlying interchange process we will colour some of the red and white particles pink (precisely when this happens is rather involved and is the subject of Section~\ref{S:chameleon_construction}). To provide some insight into when these \emph{pinkening} events occur, consider the chameleon
process of \citep{Oliveira2013a}: here, if the vertices at the endpoints of a ringing edge
are occupied by a red and a white particle then both of these particles are
recoloured pink. In the lazy version of the interchange process on a
graph (in which nothing happens with probability 1/2 when an edge
rings), when an edge rings with endpoints occupied by a red and a white
particle, with probability 1/2 they switch places and with probability
1/2 they do not move. Colouring both particles pink (which should be viewed as half red, half white) encodes the fact that
at either vertex just after the edge rings we may have a red
particle or a white particle, and these are equally likely.

We wish to use this notion of pinkening to encode similar
events in the interchange process on hypergraphs, but the situation
here is quite different since more than two particles are moved when an
edge rings, and the way in which they move depends on the permutation
chosen. As a result, describing precisely when these pinkenings occur
for our version of the chameleon process is rather complicated, but
the underlying motivation can be explained relatively simply. As in Oliveira's argument we will use pink particles as a way of tracking particles which are either red or white (equally likely). Whereas Oliveira could make use of laziness to split the conditional distribution among two sites, when dealing with hypergraphs we have to use a new idea of a ``twin'' permutation. Suppose that an edge $e$ rings and a permutation $\sigma$ is chosen to move the particles on that edge. To decide which particles to pinken, we construct a twin permutation $\tilde\sigma$ with the property that the trajectories of all black particles in the edge are identical under both permutations (a required property -- see part~\ref{item:z_same_path} of Lemma~\ref{L:existence}), and such that, viewed marginally, the distribution of $\tilde\sigma$ agrees with that of $\sigma$. We then look for vertices $v$ such that under $\sigma$ a red particle is moved to $v$ and under $\tilde\sigma$ a white particle is moved to $v$; a certain subset of these particles will be pinkened. The simplest example to consider is that of an edge of size 3 which contains one red, one white and one black particle, and for which $f_e$ is constant on $S_e$. In this case, it is straightforward to construct a twin permutation with these required properties, and the construction is sketched in Figure~\ref{fig:section6}.
\begin{figure}[!ht]
	\centering
	\includegraphics[scale=.9]{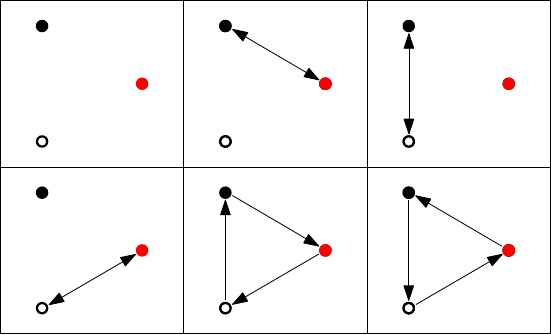}
	\caption{Consider an edge of size 3, containing one red, one white and one black particle, and for which $f_e$ is constant on $S_e$. The six possible permutations are sketched here: in this example the twin of any given permutation $\sigma$ could be taken to be the permutation immediately above/below $\sigma$. Note that in each case the  black~\protect\includegraphics[scale=.9]{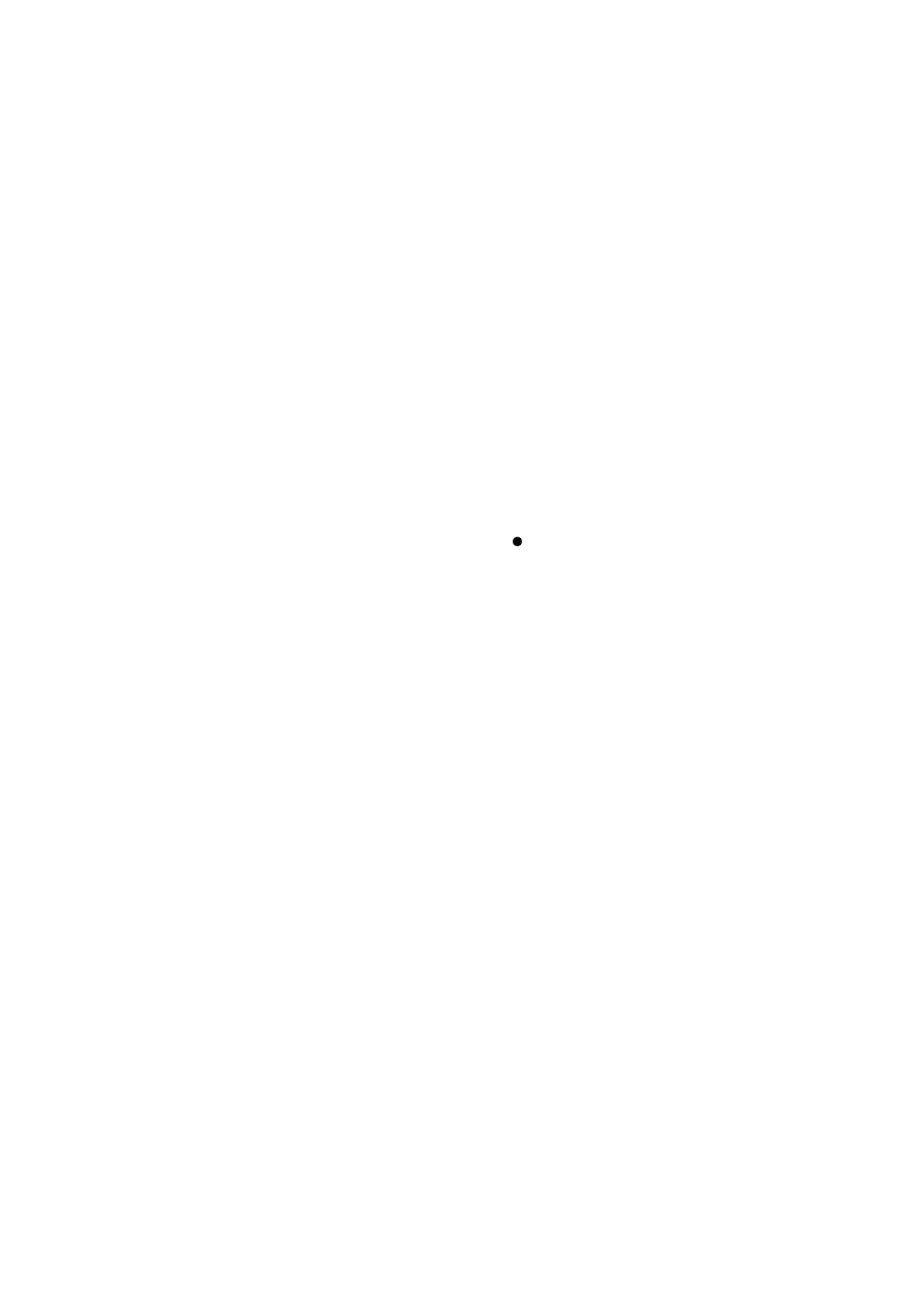} particle follows the same trajectory under both $\sigma$ and its twin; moreover, if $\sigma$ moves a red/white particle to a vertex $v$, then its twin moves a white/red particle to $v$. In this simple example we could therefore pinken the red and white particles, no matter which $\sigma$ is chosen.}\label{fig:section6}
\end{figure}

Although this example demonstrates one possibility for generating twin permutations with our desired properties, this is \emph{very} particular to the situation in which $f_e$ is constant on $S_e$ -- a much stronger condition than we are imposing in Assumption~\ref{assumpf}. In general, we shall make use of the fact that $f_e$ is constant on conjugacy classes to construct a twin permutation $\tilde\sigma$ with the  \emph{same cycle structure} as $\sigma$. (Note that the twin permutations constructed in Figure~\ref{fig:section6} do not have the same cycle structure as $\sigma$, and so we shall need to come up with an alternative method of pinkening, even when considering edges of size 3.) Figure \ref{F:image1} gives an indication of how $\tilde\sigma$ will be produced from knowledge of $\sigma$ and the particle colours in the case of $\sigma$ being a single cycle: by modifying the trajectories of four particular particles we are able to ensure that not only does $\tilde\sigma$ have the same cycle structure as $\sigma$, but that the trajectories of all black particles in the edge are identical under both permutations. It is for this reason (i.e. needing to know the colours of four particular particles) that we are able to relate the mixing time of $k$ particles to that of just four particles in Lemma \ref{L:kto4}.

%As in Oliveira's argument we will use pink particles as a way of tracking particles which are either red or white (equally likely). Whereas Oliveira could make use of laziness to split the conditional distribution among two sites, when dealing with hypergraphs we have to use a new idea of a ``twin'' permutation. Suppose
%that an edge $e$ rings and a permutation $\sigma$ is chosen to move
%the particles on that edge. To decide which particles to pinken, we
%make use of the fact that $f_e$ is constant on conjugacy classes to
%construct a twin permutation $\tilde\sigma$ with the same cycle structure
%as $\sigma$. Figure \ref{F:image1} gives an indication of how
%$\tilde\sigma$ will be produced from knowledge of $\sigma$ and the
%particle colours in the case of $\sigma$ being a single cycle: by modifying the trajectories of four particular
%particles we are able to ensure that not only does $\tilde\sigma$ have
%the same cycle structure as $\sigma$, but that the trajectories of all
%black particles in the edge are identical under both
%permutations (a required property -- see part~\ref{item:z_same_path} of Lemma~\ref{L:existence}). We then look for vertices $v$ such that under $\sigma$ a
%red particle is moved to $v$ and under $\tilde\sigma$ a white particle
%is moved to $v$; a certain subset of these particles will be pinkened.
%It is for this reason (i.e. needing to know the colours of four particular particles) that we are able to relate
%the mixing time of $k$ particles to that of just four particles in
%Lemma \ref{L:kto4}.
\begin{figure}[!ht]
  \centering
  \includegraphics[scale=.9]{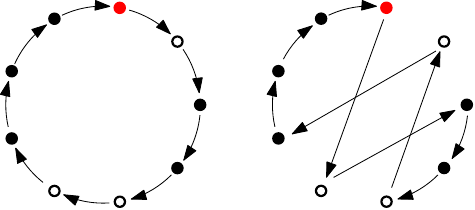}
  \caption{If $\sigma$ is the cycle given by the arrows in the image
    on the left, then one possible candidate for $\tilde\sigma$ is the
    cycle given by the arrows in the image on the
    right. Note that all black~\protect\includegraphics[scale=.9]{black_ball.pdf} particles follow the same trajectories
    under both cycles, and that the colour of $\tilde\sigma(v)$ agrees
    with that of $\sigma(v)$ for all $v$.}\label{F:image1}
\end{figure}

The chameleon process also updates at additional times (compared to its
corresponding interchange). We refer to these additional updates as
\emph{depinkings}, as at these times we get the opportunity to
collectively recolour all pink particles in the system either red or
white. As in \citep{Oliveira2013a}, we will only perform a depinking
once there are a large number of pink particles (compared to the
number of red and white) in the system.

\subsection{Twin permutations}\label{s:new_from_old} 
The first step towards constructing the chameleon process is describing how to generate the \emph{twin} permutation $\tilde\sigma$ from $\sigma$, which is the subject of this section. We begin with the case of $\sigma$ being either a product of disjoint transpositions or a single cycle (of size at least 3), and then describe how to construct $\tilde\sigma$ for a general permutation $\sigma$. We conclude the section by describing an algorithm to generate a certain set $A$ which plays a crucial role in the construction of the chameleon process.

\subsubsection{Composition of transpositions, and cycles of size at least 3}\label{S:transpositions}
We begin with some notation: for $d\in\mathbb{N}$ let $[d]=\{1,2,\dots,d\}$, and let $d' =
\lfloor d/4\rfloor$ (the floor of $d/4$). For convenience we also let $[0] = \{0\}$. 

For $d\in2\mathbb{N}$ we let $T_d$ be the set of products of disjoint transpositions: 
\[
T_d:=\left\{\prod_{i=1}^{d/2} (a_{2i-1}\,\,a_{2i}):\,1\le a_i\le d\text{ for all } 1\le i\le d,\,a_i\neq a_j\text{ for all } i\neq j\right\}.
\]
For $\sigma\in T_d$ we define an ordering, denoted $\prec$, of the transpositions in $\sigma$ as follows: $(a_i\,\,a_j)\prec (a_k\,\,a_\ell)$ if and only if $(a_i\wedge a_j)<(a_k\wedge a_\ell)$. Without loss of generality we shall always suppose that any $\sigma\in T_d$ is written such that
\[
(a_1\,\,a_2)\prec(a_3\,\,a_4)\prec\cdots\prec(a_{d-1}\,\,a_d),
\]
and $a_{2i-1}<a_{2i}$ for all $1\le i\le d/2$.

Given a set $A\subseteq[d']$ and a permutation $\sigma\in T_d$, we define the permutation $\rhofn_A(\sigma)$ to be the result of multiplying $\sigma$ (on the right) by a particular set of disjoint transpositions, as follows:
\begin{equation}\label{e:beta_trans}
\rhofn_A(\sigma) = \sigma \prod_{\substack{i\in A:\\a_{4i-1}<a_{4i-2}}}(a_{4i-3}\,\,a_{4i-1})(a_{4i-2}\,\,a_{4i})  \,. 
\end{equation}

The permutation $\rhofn_A(\sigma)$ satisfies some nice properties, which we put together in the following Lemma. The proofs are straightforward, but it is worth emphasising that part 2 of Lemma~\ref{L:hat_rho_A_props} (that $\rhofn_A$ is an involution) holds precisely because in \eqref{e:beta_trans} we only multiply by transpositions for which $a_{4i-1}<a_{4i-2}$.
\begin{lemma}\label{L:hat_rho_A_props}
	For any $\sigma\in T_d$ and set $A\subseteq[d']$:
	\begin{enumerate}
		\item $\rhofn_A(\sigma)\in T_d$;
		\item $\rhofn_A(\rhofn_A(\sigma))=\sigma$;
		\item for all $x\in[d]$, $\rhofn_A(\sigma)(x)=\sigma(x)$ unless $x\in \{a_{4i-3},a_{4i-2},a_{4i-1},a_{4i}\}$ for some $i\in A$ with $a_{4i-1}<a_{4i-2}$.
	\end{enumerate}
\end{lemma}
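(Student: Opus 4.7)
The plan is to prove the three parts in the order (1), (3), (2), with (2) being the main piece of work. A useful preliminary observation is that $\rhofn_A(\sigma) = \sigma\cdot\tau_A(\sigma)$ where
$$\tau_A(\sigma) := \prod_{\substack{i\in A:\\ a_{4i-1}<a_{4i-2}}}(a_{4i-3}\,\,a_{4i-1})(a_{4i-2}\,\,a_{4i})$$
is itself a product of disjoint transpositions, since the quadruples $\{a_{4i-3},a_{4i-2},a_{4i-1},a_{4i}\}$ are pairwise disjoint across distinct indices $i$.

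For part~1, I would perform a direct block-by-block computation showing that within each block $i$ in the range of $\tau_A$,
$$(a_{4i-3}\,\,a_{4i-2})(a_{4i-1}\,\,a_{4i})\cdot(a_{4i-3}\,\,a_{4i-1})(a_{4i-2}\,\,a_{4i}) = (a_{4i-3}\,\,a_{4i})(a_{4i-1}\,\,a_{4i-2}),$$
so that the resulting permutation is still a product of disjoint transpositions covering $[d]$, giving $\rhofn_A(\sigma)\in T_d$. Part~3 is then immediate: the support of $\tau_A(\sigma)$ is exactly $\bigcup_{i\in A,\,a_{4i-1}<a_{4i-2}}\{a_{4i-3},a_{4i-2},a_{4i-1},a_{4i}\}$, so for any $x$ outside this union, $\tau_A(\sigma)(x)=x$ and hence $\rhofn_A(\sigma)(x)=\sigma(\tau_A(\sigma)(x))=\sigma(x)$.

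For part~2, I would write $\sigma':=\rhofn_A(\sigma)$ in canonical form as $\sigma'=\prod_j(a'_{2j-1}\,\,a'_{2j})$ and establish two claims. First, the global $\prec$-ordering is preserved, since the minima of the two transpositions of each modified block are unchanged: using $a_{4i-3}<a_{4i-1}$ (from $\prec$ on $\sigma$) together with $a_{4i-1}<a_{4i}$ (canonical form), the transpositions $(a_{4i-3}\,\,a_{4i})$ and $(a_{4i-1}\,\,a_{4i-2})$ of block $i$ of $\sigma'$ still have minima $a_{4i-3}$ and $a_{4i-1}$ respectively. This identifies $a'_{4i-3}=a_{4i-3}$, $a'_{4i-2}=a_{4i}$, $a'_{4i-1}=a_{4i-1}$, $a'_{4i}=a_{4i-2}$ for modified blocks, and leaves unmodified blocks as they were. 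Second, the ``flip test'' is invariant: for $i\in A$ with $a_{4i-1}<a_{4i-2}$ in $\sigma$, the condition $a'_{4i-1}<a'_{4i-2}$ becomes $a_{4i-1}<a_{4i}$, which holds automatically by canonical form; and for $i\in A$ with $a_{4i-1}>a_{4i-2}$ the block is untouched, so the test continues to fail. Hence $\rhofn_A(\sigma')$ and $\rhofn_A(\sigma)$ multiply $\sigma$ and $\sigma'$ (respectively, on the right) by the same $\tau_A(\sigma)$, and since $\tau_A(\sigma)$ is a product of disjoint transpositions it is self-inverse, yielding $\rhofn_A(\rhofn_A(\sigma))=\sigma$.

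The main obstacle lies entirely in part~2, namely the careful bookkeeping of how the global $\prec$-ordering, the within-pair canonical order, and the flip test $a_{4i-1}<a_{4i-2}$ all interact. The restriction of $\tau_A$ to blocks satisfying the flip test is precisely what makes the set of modified blocks invariant under one application of $\rhofn_A$; without it, the two applications would act on different index sets and the involution property would collapse.
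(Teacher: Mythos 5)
Your proposal is correct, and it supplies the details the paper leaves out: the paper merely states that ``the proofs are straightforward'' and remarks that part~2 holds precisely because of the restriction $a_{4i-1}<a_{4i-2}$, which is exactly the mechanism your argument isolates (the restricted $\tau_A$ keeps the set of modified blocks invariant under one application of $\rhofn_A$, so $\tau_A(\sigma')=\tau_A(\sigma)$ and involutivity follows). The one step worth making fully explicit is that after applying $\rhofn_A$ the global $\prec$-ordering of the $d/2$ transpositions is preserved because the minimum of each transposition is unchanged (for a modified block, $a_{4i-3}<a_{4i-1}<a_{4i}$ and $a_{4i-1}<a_{4i-2}$ force the mins of the two new transpositions $(a_{4i-3}\,a_{4i})$ and $(a_{4i-1}\,a_{4i-2})$ to remain $a_{4i-3}$ and $a_{4i-1}$), so each block retains its position in the canonical listing; you state this but it is the hinge on which claim~1 rests, so it deserves the extra sentence of justification.
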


\medskip
We now move onto cycles of size at least 3. For $d\in\mathbb{N}$ we denote by $\cC_d$ the conjugacy class of $\cS_d$ (the symmetric group on $[d]$) consisting of cycles of length $d$. 
For a cycle $\sigma\in\cC_d$ we may write
\[ \sigma=(\sigma^0(1) \,\, \sigma^1(1) \,\, \sigma^2(1)\,\, \dots \,\,
\sigma^{d-1}(1)) \] where for $m\in\mathbb{N}$ we write $\sigma^m$ for
the composition of $m$ copies of $\sigma$ (and where we may sometimes
write $\sigma^d\equiv\sigma^0$ for the identity permutation).

For $d\ge4$ and $i=1,\dots,d'$ define the function $\rhofn_i:[d]\to[d]$ by
\begin{equation}\label{e:rho_i_defn} \rhofn_i(j) = \begin{cases}
2d'+2i-1 & j=2i-1 \\
2i-1 & j=2d'+2i-1 \\
j & \text{otherwise.}
\end{cases}
\end{equation}
For $d=3$ we similarly define the function $\rhofn_0:[d]\to[d]$ by
\begin{equation}\label{e:rho_i_defn2} 
\rhofn_0(j) = \begin{cases}
2 & j=1 \\
1 & j=2 \\
3 & j=3\,.
\end{cases}
\end{equation}

For $d\ge4$ and a cycle $\sigma\in \cC_d$, for each $1\le i\le d'$, we define $\rhofn_i(\sigma)\in \cC_d$ to be the
permutation satisfying
\begin{equation}\label{e:beta_fn}
\rhofn_i(\sigma)^j(1) = \sigma^{\rhofn_i(j)}(1) \,, \quad
j=1,\dots,d.
\end{equation}
This permutation is clearly a cycle, and may be
written as
\begin{align*} \rhofn_i(\sigma) = (&\sigma^0(1)\,\, \cdots \,\,
\sigma^{2i-2}(1)\,\,\sigma^{2d'+2i-1}(1)\,\,\sigma^{2i}(1)\\&\cdots
\,\,
\sigma^{2d'+2i-2}(1)\,\,\sigma^{2i-1}(1)\,\,\sigma^{2d'+2i}(1)\,\,\cdots
\,\, \sigma^{d-1}(1)). \end{align*}
For a cycle $\sigma\in\cC_3$, we define $\rhofn_0(\sigma)$ using the same formula as in \eqref{e:beta_fn}, yielding the 3-cycle 
$\rhofn_0(\sigma) = \sigma^2 = (1\,\,\sigma^2(1)\,\,\sigma(1))$.

\begin{rmk}\label{r:function_or_perm}
	Note that for the case $d\ge 4$, $\rhofn_i(\sigma)$ may be obtained from $\sigma$ by
	multiplication by the product of two disjoint transpositions:
	\[
	\rhofn_i(\sigma) = \sigma \,(\sigma^{2i-2}(1)\,\,\sigma^{2d'+2i-2}(1))(\sigma^{2i-1}(1)\,\,\sigma^{2d'+2i-1}(1))
	\,.
	\]
\end{rmk}

\begin{lemma}\label{L:rho_function_props}
	For $d\ge3$ and any $i,j\in[d']$:
	\begin{enumerate}
		\item The function $\rhofn_i$ is self-inverse;
		\item functions $\rhofn_i$ and $\rhofn_j$ commute for $i\neq j$.
	\end{enumerate}
\end{lemma}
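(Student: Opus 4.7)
The plan is to observe that each $\rhofn_i$ is, as a map on $[d]$, a single transposition, from which both statements of the lemma will follow almost immediately.

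First I would handle the trivial case $d=3$, where $[d']=[0]=\{0\}$ by convention, so the only function to consider is $\rhofn_0$, which by inspection of \eqref{e:rho_i_defn2} is the transposition $(1\,\,2)\in\cS_3$. Any transposition is self-inverse, and since there is no pair of distinct indices $i\ne j$ in $[0]$, the commutativity statement is vacuous in this case.

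For $d\ge 4$ and $i\in[d']$, I would next read off from \eqref{e:rho_i_defn} that $\rhofn_i$ is precisely the transposition $(2i-1\,\,\,2d'+2i-1)\in\cS_d$ (every element other than $2i-1$ and $2d'+2i-1$ is fixed, and these two are swapped). Self-inverseness of $\rhofn_i$ is then immediate, so claim (1) follows.

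For claim (2), I would show that for distinct $i,j\in[d']$ the supports $\{2i-1,\,2d'+2i-1\}$ and $\{2j-1,\,2d'+2j-1\}$ of the transpositions $\rhofn_i$ and $\rhofn_j$ are disjoint. The equalities $2i-1=2j-1$ and $2d'+2i-1=2d'+2j-1$ both force $i=j$, while $2i-1=2d'+2j-1$ gives $i=d'+j\ge d'+1$, contradicting $i\le d'$; the remaining cross case is symmetric. Since disjoint transpositions commute in $\cS_d$, this gives $\rhofn_i\circ\rhofn_j=\rhofn_j\circ\rhofn_i$. There is essentially no obstacle here; the whole point of the definition \eqref{e:rho_i_defn} is to separate the pairs $\{2i-1,2d'+2i-1\}$ for different $i$ within $[d']$, and the bound $i\le d'$ is exactly what is needed to keep the two halves $\{1,\dots,2d'\}$ and $\{2d'+1,\dots,4d'\}$ of the support from colliding.
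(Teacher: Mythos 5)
Your proof is correct. The paper's own proof of part 1 is just ``follows directly from the definition,'' and for part 2 it invokes Remark~\ref{r:function_or_perm} to see that $\rhofn_i$ and $\rhofn_j$ involve disjoint transpositions. You take essentially the same route, but more directly: rather than passing through the permutation-level Remark~\ref{r:function_or_perm} (which concerns how $\rhofn_i(\sigma)$ differs from $\sigma$), you simply read off from \eqref{e:rho_i_defn} that each $\rhofn_i$ is itself the transposition $(2i-1\,\,2d'+2i-1)$ on $[d]$, and verify disjointness of supports by the short case check. This is a cleaner proof of the statement actually being asserted (about the functions $\rhofn_i:[d]\to[d]$, which is what Definition~\ref{d:rho_algorithm} needs), and your observation that the bound $i\le d'$ is exactly what prevents the ``first-half'' and ``second-half'' supports from colliding is a nice touch.
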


\begin{proof}
	Part 1 follows directly from the definition of $\rhofn_i$. Part 2 only applies when $d\ge 4$, and 
	follows from the observation that the transpositions in
	Remark~\ref{r:function_or_perm} corresponding to $\rhofn_i$ and
	$\rhofn_j$ commute for $i\neq j$.
\end{proof}

\begin{defn}\label{d:rho_algorithm}
	Given a set $A\subseteq[d']$ and a cycle $\sigma\in\cC_d$, we define
	$\rhofn_A:[d]\to[d]$ to be the composition of the functions
	$\{\rhofn_i\,:\, i\in A\}$ appearing in \eqref{e:rho_i_defn} and \eqref{e:rho_i_defn2}. (Thanks
	to the second statement of Lemma~\ref{L:rho_function_props} this function is
	well-defined.) If $A=\varnothing$ then we set
	$\rhofn_A$ to be the identity function. \\
	This in turn defines a cycle $\rhofn_A(\sigma)\in\cC_d$ satisfying
	\begin{equation}\label{e:rho_A_defn}
	\rhofn_A(\sigma)^j(1) = \sigma^{\rhofn_A(j)}(1) \,, \quad
	j=1,\dots,d. 
	\end{equation}
\end{defn}

When $\sigma$ is a $d$-cycle the permutation $\rhofn_A(\sigma)$ satisfies analogous properties to those already observed to hold (in Lemma~\ref{L:hat_rho_A_props}) for $\rhofn_A(\sigma)$ when $\sigma\in T_d$. The proofs all follow simply from the
definition of $\rhofn_i$ and Lemma \ref{L:rho_function_props}.  For each $i\in[d']$, write
\begin{equation}\label{e:H_i-defn}
H_i =\begin{cases}
\{2i-2,2i-1,2d'+2i-2,2d'+2i-1\} & \quad\text{ if $d'\ge 1$ (i.e. $d\ge 4$)}\,, \\
\{1,2,3\} & \quad \text{ if $d'=0$ (i.e. $d= 3$)\,,}
\end{cases}
\end{equation} 
and for $A\subseteq[d']$
let $H_A = \bigcup_{i\in A}H_i$.
\begin{lemma}\label{L:rho_A_props}
	For any $d\ge3$, $\sigma\in\cC_d$ and set $A\subseteq[d']$: 
	\begin{enumerate}
		\item $\rhofn_A(\sigma)\in\cC_d$;
		\item\label{item:self_inverse} $\rhofn_A(\rhofn_A(\sigma)) = \sigma$;
		\item for all $x\in[d]$, $\rhofn_A(\sigma)(x) = \sigma(x)$ unless
		$x=\sigma^j(1)$ for some $j\in H_A$.
	\end{enumerate}
\end{lemma}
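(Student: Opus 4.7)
My plan is to handle the three parts in order, exploiting the involution structure of $\rho_A$ established in Lemma~\ref{L:rho_function_props}. As a preliminary, I observe that $\rho_A$ is a self-inverse bijection of $[d]$, and moreover it fixes $d$: each factor $\rho_i$ sends $d$ to itself because $2i-1\le 2d'-1<d$ and $2d'+2i-1\le 4d'-1<d$ (with $\rho_0(3)=3$ handling the $d=3$ case directly). This lets me also view $\rho_A$ as a self-inverse bijection of $\mathbb{Z}/d\mathbb{Z}$ via $\rho_A(0)=0$ and keep indices modular throughout.

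Part~1 then follows from a direct inspection of \eqref{e:rho_A_defn}: the $d$ values $\rho_A(\sigma)^j(1)=\sigma^{\rho_A(j)}(1)$ for $j\in[d]$ are a permutation of the $\sigma$-orbit of $1$ by the bijection $\rho_A$, hence exhaust $[d]$, and $\rho_A(\sigma)^d(1)=\sigma^{\rho_A(d)}(1)=\sigma^d(1)=1$ closes the cycle. Part~2 is a one-line computation using the definition twice:
\[
\rho_A(\rho_A(\sigma))^j(1)=\rho_A(\sigma)^{\rho_A(j)}(1)=\sigma^{\rho_A(\rho_A(j))}(1)=\sigma^j(1)\qquad(j\in[d]),
\]
after which two $d$-cycles agreeing on the full orbit of $1$ must coincide as permutations.

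Part~3 is the main obstacle, requiring some modular bookkeeping. Writing $x=\sigma^j(1)$ for the unique $j\in\{0,1,\dots,d-1\}$, I will use \eqref{e:rho_A_defn} (with indices mod $d$) to obtain
\[
\rho_A(\sigma)(x)=\sigma^{\rho_A(\rho_A(j)+1)}(1)\quad\text{and}\quad\sigma(x)=\sigma^{j+1}(1),
\]
so that $\rho_A(\sigma)(x)=\sigma(x)$ is equivalent to $\rho_A(\rho_A(j)+1)\equiv j+1\pmod{d}$. Applying $\rho_A$ to both sides and using self-inverseness further reduces this to the consecutivity condition $\rho_A(j+1)-\rho_A(j)\equiv 1\pmod{d}$. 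Since $\rho_A$ is the identity off the swapped set $S:=\bigcup_{i\in A}\{2i-1,2d'+2i-1\}$, the consecutivity condition fails precisely when at least one of $j$ or $j+1\bmod d$ lies in $S$; for $d\ge 4$ the elements of $S$ are odd so two consecutive indices cannot both lie there, while the $d=3$ case ($S=\{1,2\}$) is verified by a direct check. The set of bad $j$ is therefore $S\cup(S-1\bmod d)$, which a direct comparison shows equals $H_A$, completing the proof.
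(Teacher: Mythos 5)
Your proof is correct, and it fills in an argument that the paper explicitly omits (the lemma is stated with the remark that the properties ``all follow simply from the definition of $\rhofn_i$ and Lemma~\ref{L:rho_function_props}'', with no proof given). The preliminary observation that $\rhofn_A$ fixes $d$ (so that it descends to a self-inverse bijection of $\mathbb{Z}/d\mathbb{Z}$) is the right way to make the modular bookkeeping in Part~3 painless, and your reduction of $\rhofn_A(\sigma)(x)=\sigma(x)$ to the consecutivity condition $\rhofn_A(j+1)-\rhofn_A(j)\equiv 1\pmod d$ is clean. The identification of the bad set $S\cup(S-1)$ with $H_A$ is exactly right: for $d\ge 4$ the swapped indices are all odd so consecutive pairs cannot both lie in $S$, and the $d=3$ case is checked directly (there both elements of $S=\{1,2\}$ are consecutive, but the condition still fails because $\rhofn_0(2)-\rhofn_0(1)=-1\not\equiv 1\pmod 3$, giving $H_0=\{1,2,3\}$ which is everything). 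Parts~1 and~2 are likewise correct: Part~1 uses that $\rhofn_A$ bijects $[d]$ and fixes $d$ so the orbit of $1$ closes at step $d$, and Part~2 is the expected two-line computation from the definition together with $\rhofn_A\circ\rhofn_A=\mathrm{id}$, with the observation that a $d$-cycle is determined by the orbit of $1$.
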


\medskip
So far we have defined a method for producing a permutation $\rhofn_A(\sigma)$ in the event that $\sigma$ is either a product of disjoint transpositions or a cycle of length at least 3. Now consider what happens when we apply the function $\rhofn_A$ to a permutation chosen uniformly from either $\cC_d$ or $T_d$ (for some $d$). Clearly, for any set $A$
chosen independently of $\sigma$, the resulting permutation $\rhofn_A(\sigma)$
will be uniformly distributed on the same conjugacy class as $\sigma$. Most importantly, this
remains true even when $A$ is allowed to depend upon $\sigma$, as long
as a certain condition is met, as explained in the following Lemma. We
denote by $\cP^\Omega$ the power set of a set $\Omega$.

\begin{lemma}\label{L:algorithm_random_input_SBC}
	Let $G_d$ denote either of the conjugacy classes $\cC_d$ ($d\ge 3$) or $T_d$ ($d\in2\mathbb{N}$).
	Suppose that $A:G_d\to \cP^{[d']}$ satisfies for all $\sigma\in G_d$,
	\begin{align}\label{eq:Aprop}
	A(\rhofn_{A(\sigma)}(\sigma))=A(\sigma)\,,
	\end{align}
	and that $\sigma$ is chosen uniformly from $G_d$. Then
	$\rhofn_{A(\sigma)}(\sigma)$ is also uniform on $G_d$. Moreover, if
	we average over the input permutation $\sigma$, then the output
	$\rhofn_{A(\sigma)}(\sigma)$ is independent of the choice of $A$.
\end{lemma}

\begin{proof}
	Given the permutation $\sigma$, let $\tilde\sigma =
	\rhofn_{A(\sigma)}(\sigma)$. The assumption on $A$ says that
	$A(\tilde\sigma) = A(\sigma)$. 
	Since $\rhofn_A$ is an involution (Lemmas~\ref{L:hat_rho_A_props} and \ref{L:rho_A_props}) it follows that
	\[ \rhofn_{A(\tilde\sigma)}(\tilde\sigma) = \rhofn_{A(\sigma)}(\tilde\sigma) =
	\rhofn_{A(\sigma)}(\rhofn_{A(\sigma)}(\sigma)) = \sigma\,. \]
	Thus the function $\rhofn_{A(\cdot)}(\cdot)$ is self-inverse.
\end{proof}

Although Lemma~\ref{L:algorithm_random_input_SBC} is relatively
simple, its importance should be emphasised at this point. We shall
make use of the function $\rhofn_A$ to generate the random permutations
$\tilde\sigma$ used in the construction of the chameleon process, and in doing so the input $A$ will
depend on the state of the chameleon process. 
The second part of Lemma~\ref{L:algorithm_random_input_SBC} will be
used to guarantee that the permutation
$\tilde\sigma=\rhofn_{A(\sigma)}(\sigma)$ is independent of $A$. (The
permutations $\tilde\sigma$ will be used to generate an interchange
process $\tilde{\bf x}^\mathrm{IP}$, and so it will be crucial that
these do not depend on the state of the process.)

%\begin{lemma}\label{L:algorithm_random_input_SBC_2}
%	Suppose $A:T_d\to\cP^{[d']}$ satisfies for all $\sigma\in T_d$,
%	\[
%	A(\rhofn_{A(\sigma)}(\sigma))=A(\sigma)
%	\]
%	and $\sigma$ is uniform from $T_d$. Then $\rhofn_{A(\sigma)}(\sigma)$ is also uniform on $T_d$. Moreover, if
%	we average over the input permutation $\sigma$, then the output
%	$\rhofn_{A(\sigma)}(\sigma)$ is independent of the choice of $A$.
%\end{lemma}

\subsubsection{General permutations}\label{SSS:generalperm}
By combining the ideas from the previous two sections we can now describe the algorithm for the construction of the twin permutation $\tilde\sigma$ (which will be given by $\rhofn_{A(\sigma)}(\sigma)$ for some function $\rhofn_{A(\cdot)}(\cdot)$ to be defined) when $\sigma\in\cS_n$ is a general permutation. The first step is to decompose the input permutation $\sigma$ into its canonical cyclic decomposition form. Indeed, except for transpositions, the function $\rhofn_{A(\cdot)}(\cdot)$ will act independently on each
cycle in a given permutation's decomposition.

Suppose $\sigma$ has canonical cyclic decomposition form (where we omit fixed points):
\begin{align}\label{eq:decomp}
\sigma=\rho_0\circ\rho_1\circ\cdots\circ\rho_K,
\end{align}
where $K$ denotes the number of cycles in $\sigma$ of size at least 3, and $\rho_0$ is a (possibly empty) product of disjoint transpositions. 

For $i=0,1,\dots,K$ we write $m_i$ for the minimal element of $\rho_i$, and write $d_i$ for the size of the non-trivial orbit of $\rho_i$. (For example, if $\sigma = (1\,4)(2\,9)(3\,7\,6\,8\,5)\in\cS_9$ then $K=1$, $m_0 = 1$, $m_1=3$, $d_0 = 4$ and $d_1 = 5$.) Given the elements of the non-trivial orbit of $\rho_i$, there is an obvious natural bijection between permutations of those elements and permutations of the set $[d_i]=\{1,\dots,d_i\}$, in which the minimal element $m_i$ is mapped to 1. Rather than writing out this correspondence in detail, in order to ease notation in what follows we shall simply consider $\rho_i$ to be a member of the set $\cC_{d_i}$ etc, even though the set of elements belonging to $\rho_i$ will not in general be $\{1,\dots,d_i\}$.

With this understanding in mind, suppose that $A(\sigma)$ is a vector of the form

\begin{equation}\label{E:A_sigma}
A(\sigma) = (A_0(\rho_0),A_1(\rho_1),\dots,
A_K(\rho_K))\,,
\end{equation}
where $A_0:T_{d_0}\to\cP^{[d_0']}$ and $A_i:\cC_{d_i}\to \cP^{[d_i']}$ for $i=1,\dots,K$. Then we can easily
extend the idea of our functions $\rhofn_A$ to apply to general permutations.
\begin{defn}\label{D:rho_A_perms}
	Let $\sigma\in\cS_n$ be a permutation with cyclic decomposition \eqref{eq:decomp}, and assume that $A$ is a function on $\cS_n$ satisfying
	\eqref{E:A_sigma}. Then we define $\tilde\rhofn_{A(\sigma)}(\sigma)$ to be
	the composition of the permutations obtained by applying the functions
	$\rhofn_{A_i(\rho_i)}$ separately to each $\rho_i$:
	\[ \tilde\rhofn_{A(\sigma)}(\sigma) = \prod_{i=0}^K \rhofn_{A_i(\rho_i)}(\rho_i)\,, \]
	where $\rhofn_{A_i(\rho_i)}(\rho_i)$ are as defined in Section~\ref{S:transpositions} (but with $m_i$ replacing the element 1, as already explained).
\end{defn}

Definition~\ref{D:rho_A_perms} says that $\tilde\rhofn_{A(\sigma)}(\sigma)$ is obtained from $\sigma$ by
modifying each of its cycles of size at least 3, and the set of disjoint transpositions, independently using functions $\rhofn_{A_i(\cdot)}(\cdot)$ with which we are already familiar. We therefore have the following corollary to Lemmas~\ref{L:hat_rho_A_props} and~\ref{L:rho_A_props}.

\begin{cor}\label{C:tilde_rho_A_props}
	For any $\sigma\in S_n$ and function $A$ on $\cS_n$ satisfying
	\eqref{E:A_sigma}:
	\begin{enumerate}
		\item $\tilde\rhofn_{A(\sigma)}(\sigma)$ belongs to the same conjugacy class as $\sigma$;
		\item $\tilde\rhofn_{A(\sigma)}(\tilde\rhofn_{A(\sigma)}(\sigma))=\sigma$;
		\item for all $x\in[n]$, $\tilde\rhofn_{A(\sigma)}(x)=\sigma(x)$ unless $x\in \{a_{4i-3},a_{4i-2},a_{4i-1},a_{4i}\}$ for some $i\in A_0(\rho_0)$ with $a_{4i-1}<a_{4i-2}$, or $x=\sigma^j(m_i)$ for some $j\in \cup_{i=1}^K H_{A_i(\rho_i)}$.
	\end{enumerate}
\end{cor}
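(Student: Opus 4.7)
The plan is to reduce all three parts to their cycle-wise analogues in Lemmas~\ref{L:rho_A_props} and~\ref{L:hat_rho_A_props}, exploiting the fact that the cyclic decomposition $\sigma = \rho_0\circ\rho_1\circ\cdots\circ\rho_K$ consists of permutations with pairwise disjoint supports. The key preliminary observation is that $\tilde\rho_i := \rhofn_{A_i(\rho_i)}(\rho_i)$ has the same support as $\rho_i$ for every $i$: for $i\ge 1$ this follows from Remark~\ref{r:function_or_perm} when $d_i\ge 4$ and from direct inspection of \eqref{e:rho_i_defn2} when $d_i=3$, while for $i=0$ it is immediate from the definition of $\rhofn_A$ on $T_{d_0}$ in Section~\ref{S:transpositions}. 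Consequently the $\tilde\rho_i$ remain pairwise disjoint, and the product $\tilde\rhofn_{A(\sigma)}(\sigma) = \prod_{i=0}^K \tilde\rho_i$ is already in the form of its cyclic decomposition.

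Given this, part 1 is immediate: each $\tilde\rho_i$ has the same cycle type as $\rho_i$ by part 1 of Lemma~\ref{L:rho_A_props} (for $i\ge 1$) or of Lemma~\ref{L:hat_rho_A_props} (for $i=0$), so $\prod_i \tilde\rho_i$ lies in the conjugacy class of $\sigma$. For part 2, I would write $\tilde\sigma := \tilde\rhofn_{A(\sigma)}(\sigma)$ and interpret the outer application of $\tilde\rhofn_{A(\sigma)}$ via the cyclic decomposition of $\tilde\sigma$; since $\mathrm{supp}(\tilde\rho_i)=\mathrm{supp}(\rho_i)$, the set $A_i(\rho_i)$ is unambiguously paired with $\tilde\rho_i$. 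The self-inverse property (part 2 of Lemmas~\ref{L:rho_A_props} and~\ref{L:hat_rho_A_props}) then gives $\rhofn_{A_i(\rho_i)}(\tilde\rho_i) = \rho_i$ for each $i$, and disjointness assembles these into $\tilde\rhofn_{A(\sigma)}(\tilde\sigma) = \prod_i \rho_i = \sigma$.

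For part 3, disjointness once again localises the analysis: if $x\in\mathrm{supp}(\rho_i)$ then $\sigma(x)=\rho_i(x)$ and $\tilde\rhofn_{A(\sigma)}(\sigma)(x)=\tilde\rho_i(x)$, while if $x$ lies outside every $\mathrm{supp}(\rho_j)$ both permutations fix $x$ and no exception is needed. Part 3 of Lemma~\ref{L:rho_A_props} (using $\rho_i^j(m_i)=\sigma^j(m_i)$ for $j\ge 0$, which holds because $\sigma$ and $\rho_i$ agree on $\mathrm{supp}(\rho_i)$) and part 3 of Lemma~\ref{L:hat_rho_A_props} then produce precisely the exceptional sets listed in the statement. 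I do not anticipate any serious obstacle; the only point requiring care is the notational interpretation of $\tilde\rhofn_{A(\sigma)}$ as an operator acting on permutations other than $\sigma$, which is made unambiguous by the support-preservation observation, and thereafter the corollary is a cycle-by-cycle bookkeeping exercise.
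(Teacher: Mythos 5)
Your proposal is correct and takes essentially the same route as the paper, which states the corollary as an immediate consequence of Definition~\ref{D:rho_A_perms} together with the cycle-level Lemmas~\ref{L:rho_A_props} and~\ref{L:hat_rho_A_props} without giving further detail. Your support-preservation observation is precisely the point that makes the cycle-by-cycle reduction (and in particular the well-definedness of $\tilde\rhofn_{A(\sigma)}$ applied to $\tilde\sigma$ in part 2) rigorous, so you have supplied the omitted bookkeeping faithfully.
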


Furthermore, note that if we choose a random permutation
$\sigma\in\cS_n$ according to a law $f$ which is constant
on conjugacy classes, then given the sizes of the cycles in the
decomposition of $\sigma$, the elements of $[n]$ belonging to each cycle are (marginally)
uniform. We can therefore also obtain a corollary to
Lemma~\ref{L:algorithm_random_input_SBC}:
\begin{cor}\label{C:algorithm_random_input}
	Suppose that $A$ is a function on $\cS_n$ satisfying
	\eqref{E:A_sigma}, and that for all $\sigma\in \cS_n$ with cyclic decomposition \eqref{eq:decomp} and each
	$i=0,1,\dots,K$,
	\[
	A_i(\rhofn_{A_i(\rho_i)}(\rho_i))=A_i(\rho_i)\,.
	\]
	If $\sigma$ is chosen according to law $f$
	on $\cS_n$ which is constant on conjugacy classes then $\tilde \rhofn_{A(\sigma)}(\sigma)$ also has law $f$ on $\cS_n$. Moreover, if we average over the input permutation $\sigma$, then the output $\tilde\rhofn_{A(\sigma)}(\sigma)$ is independent of the choice of $A$.
\end{cor}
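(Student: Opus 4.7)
The plan is to reduce Corollary~\ref{C:algorithm_random_input} to Lemmas~\ref{L:algorithm_random_input_SBC} and~\ref{L:algorithm_random_input_SBC_2} by conditioning on the cycle structure of $\sigma$. By part~1 of Corollary~\ref{C:tilde_rho_A_props}, the output $\tilde\rhofn_{A(\sigma)}(\sigma)$ lies in the same conjugacy class $\cK$ as $\sigma$. Since $f$ is constant on conjugacy classes, it suffices to prove that, conditionally on $\sigma\in\cK$, the output is uniformly distributed on $\cK$; summing over $\cK$ then yields that $\tilde\rhofn_{A(\sigma)}(\sigma)$ also has law $f$.

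Having fixed $\cK$ (so that $K$ and the sizes $d_0,d_1,\dots,d_K$ are determined), I would further condition on the partition $\cP$ of $[n]$ into the orbit-sets of $\rho_0,\rho_1,\dots,\rho_K$ together with the fixed-point set. The key observation is that, conditionally on $\cK$ and $\cP$, the factors $\rho_0,\rho_1,\dots,\rho_K$ are mutually independent, with $\rho_0$ uniform on $T_{d_0}$ (viewed as a permutation of its orbit-set) and each $\rho_i$ for $i\ge 1$ uniform on $\cC_{d_i}$, under the natural identifications discussed in Section~\ref{SSS:generalperm}. Moreover, by Definition~\ref{D:rho_A_perms} the map $\tilde\rhofn_{A(\sigma)}$ acts factorwise via $\rhofn_{A_i(\rho_i)}$ on each $\rho_i$, and by part~1 of Corollary~\ref{C:tilde_rho_A_props} applied factor by factor, each output $\rhofn_{A_i(\rho_i)}(\rho_i)$ lies in the same space ($T_{d_0}$ or $\cC_{d_i}$) as its input; in particular the partition $\cP$ is preserved.

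The assumed fixed-point condition $A_i(\rhofn_{A_i(\rho_i)}(\rho_i))=A_i(\rho_i)$ is precisely the hypothesis of Lemmas~\ref{L:algorithm_random_input_SBC} and~\ref{L:algorithm_random_input_SBC_2}. Applying Lemma~\ref{L:algorithm_random_input_SBC_2} to the $i=0$ factor and Lemma~\ref{L:algorithm_random_input_SBC} to each $i\ge 1$ factor, each output $\rhofn_{A_i(\rho_i)}(\rho_i)$ is uniform on its respective space; the outputs remain mutually independent because each $A_i$ depends only on its own $\rho_i$. Therefore the product $\tilde\rhofn_{A(\sigma)}(\sigma)=\prod_{i=0}^K \rhofn_{A_i(\rho_i)}(\rho_i)$ is uniform on $\cK$ conditionally on $\cP$, and hence uniform on $\cK$ unconditionally, as required. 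The ``averaging over $\sigma$'' independence-of-$A$ assertion follows in the same way from the second halves of the two lemmas applied factorwise.

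The main obstacle to watch is bookkeeping rather than anything conceptual: when several cycles share the same size $d_i$, one must use a fixed convention (say, ordering the cycles by minimum element) to make the decomposition in \eqref{eq:decomp} canonical. Since $\tilde\rhofn_{A(\sigma)}$ preserves the element-set of every orbit, any such convention is compatible between input and output, so this introduces no complication. All of the substantive content is in the factorwise application of the two preceding lemmas; the rest is conditioning.
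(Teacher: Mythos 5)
Your proof is correct and takes essentially the same route as the paper, which gives only the one-sentence justification immediately preceding the statement of the corollary (``given the sizes of the cycles in the decomposition of $\sigma$, the elements of $[n]$ belonging to each cycle are (marginally) uniform'') and then invokes Lemmas~\ref{L:algorithm_random_input_SBC} and~\ref{L:algorithm_random_input_SBC_2} factorwise. You have simply spelled out the conditioning on the conjugacy class and orbit-sets, the conditional independence and uniformity of the factors, and the fact that each $\rhofn_{A_i(\rho_i)}$ preserves the orbit-set of $\rho_i$, all of which is exactly the intended argument.
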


\subsubsection{Choosing the set $A$}
We have described in Section \ref{SSS:generalperm} how to generate the twin permutation $\tilde\sigma=\tilde \rhofn_{A(\sigma)}(\sigma)$ from a
permutation $\sigma$ such that it has the same law as $\sigma$. We now
detail our method for choosing the vector $A(\sigma)$ appearing in the
definition of $\tilde\rhofn_A$, in such a way that the conditions of
Corollary~\ref{C:algorithm_random_input} are satisfied; an illustrative example can be found in  Figures~\ref{fig:constructing_A} and~\ref{fig:constructing_A_tilde_sigma}.
Our choice of $A$ will depend not only on $\sigma$
but also on particular subsets of vertices in the edge under
consideration. (Later on these subsets will be specified in the
chameleon process, but for now we keep them as general subsets.)
Indeed, given an edge $e\in E$ and a permutation $\sigma\in \cS_e$, the function $A$ is of the form $A(R,W,\sigma)$, where $R$ and $W$ are two disjoint subsets of $V$.

Recall the definition of the set $H_i$ in \eqref{e:H_i-defn}, and the canonical cyclic decomposition of $\sigma$ from \eqref{eq:decomp} in which $K$ denotes the number of cycles of size at least 3 in $\sigma$. For a set of integers $B$ let us write $\sigma^B(x) = \{\sigma^i(x)\,:\,i\in B\}$. Then for each $1\le i\le K$, we define
\begin{equation}\label{E:A_i_defn2}
A_i(R,W,\rho_i)=\begin{cases}
\Big\{j \in[d_i']\,:\, \rho_i^{H_j}(m_i) \in \left\{\{r_1,w_1,w_2\},\{w_1,r_1,r_2\}\right\} &\\ 
\qquad\qquad\qquad\qquad\text{for some $r_1,r_2\in R,\,w_1,w_2\in W$}\Big\} & \text{ if }d_i = 3\\
\Big\{j \in[d_i']\,:\, \rho_i^{H_j}(m_i) \in \left\{\{r_1,w_1,w_2,w_3\},\{w_1,r_1,r_2,r_3\}\right\} &\\ \qquad\qquad\qquad\qquad\text{for some $r_1,r_2,r_3\in R,\,w_1,w_2,w_3\in W$}\Big\} & \text{ if }d_i \ge 4\,.
\end{cases}
\end{equation}

Recall that $\rho_0$ denotes the composition of all disjoint transpositions in $\sigma$. Using our usual ordering we can write 
\[ \rho_0 = \prod_{i=1}^{d_0/2} (a_{2i-1}\,a_{2i})\]
for some $d_0\in2\mathbb{N}$ and elements $a_j \in e$, where $a_{2i-1}<a_{2i}$ for all $1\le i\le d_0/2$.
This allows us to define
\begin{align}
A_0(R,W,\rho_0)&=\Big\{j\in[d_0']:\,\{a_{4j-3},a_{4j-2},a_{4j-1},a_{4j}\}\in\big\{\{r_1,w_1,w_2,w_3\},\{w_1,r_1,r_2,r_3\}\big\} \notag\\&\qquad\qquad\qquad\qquad\qquad\mbox{ for some }r_1,r_2,r_3\in R,\,w_1,w_2,w_3\in W\Big\}\,.\label{E:A_i_defn3}
\end{align}

Finally, we define $A(R,W,\sigma)$ to be the vector 
\begin{equation}\label{e:final_A}
A(R,W,\sigma)=\left(A_0(R,W,\rho_0), A_1(R,W,\rho_1),
A_2(R,W,\rho_2),\dots,A_K(R,W,\rho_K)\right) \,.
\end{equation}

\begin{figure}[!ht]
	
	\centering
	\includegraphics[scale=1.2]{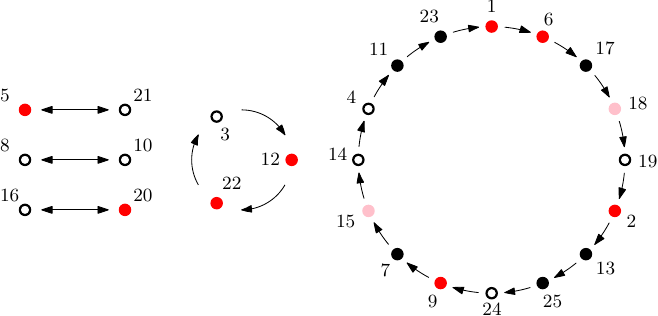}
	\caption{An example of a permutation applied to an edge of size 25. Particles at vertices in $R=\{1,2,5,6,9,12,20,22\}$ are coloured red \protect\includegraphics[scale=1.2]{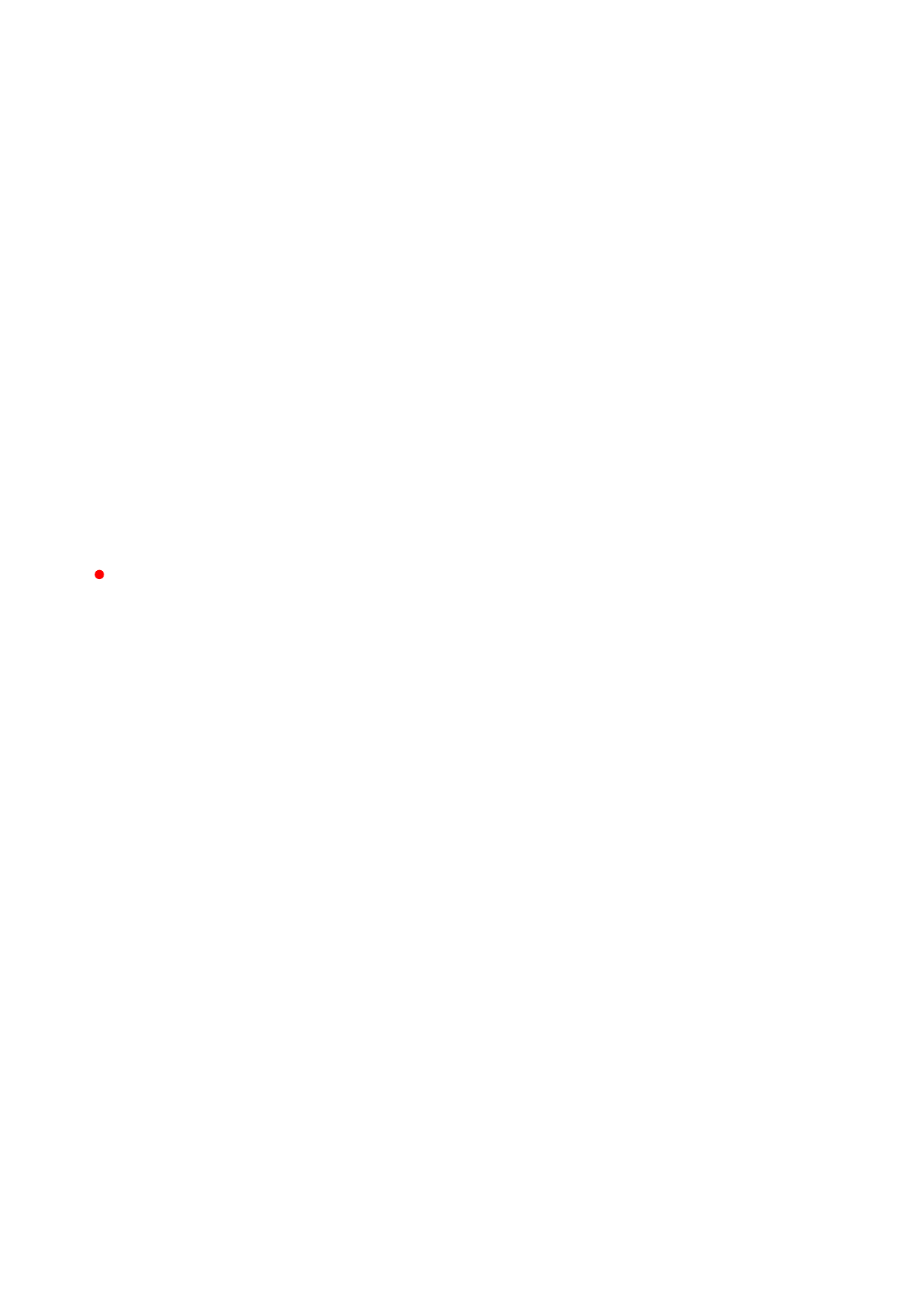} and at vertices in $W=\{3,4,8,10,14,16,19,21,24\}$ are coloured white~\protect\includegraphics[scale=1.2]{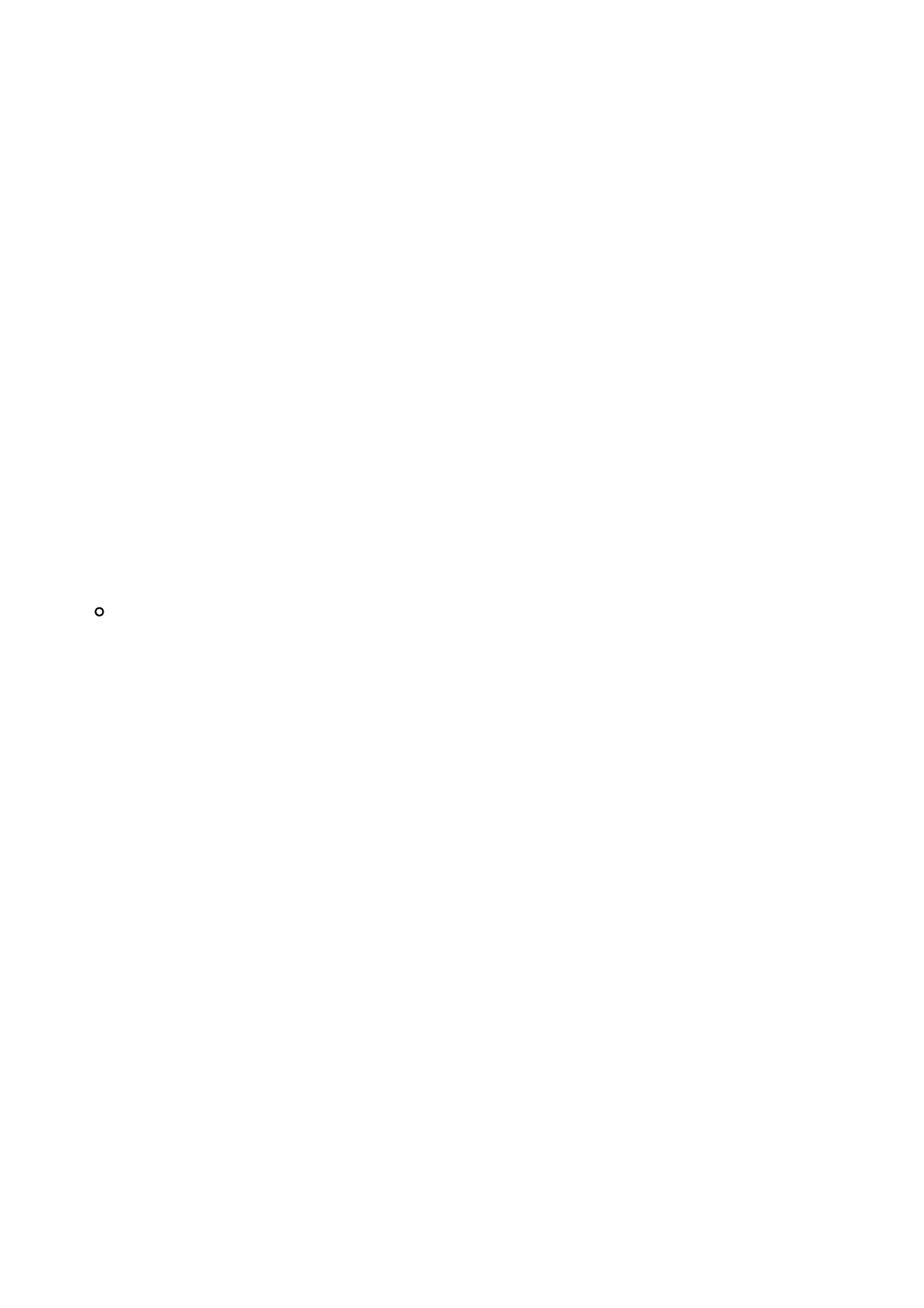}. (The other colours -- black~\protect\includegraphics[scale=1.2]{black_ball.pdf} and pink~\protect\includegraphics[scale=1.2]{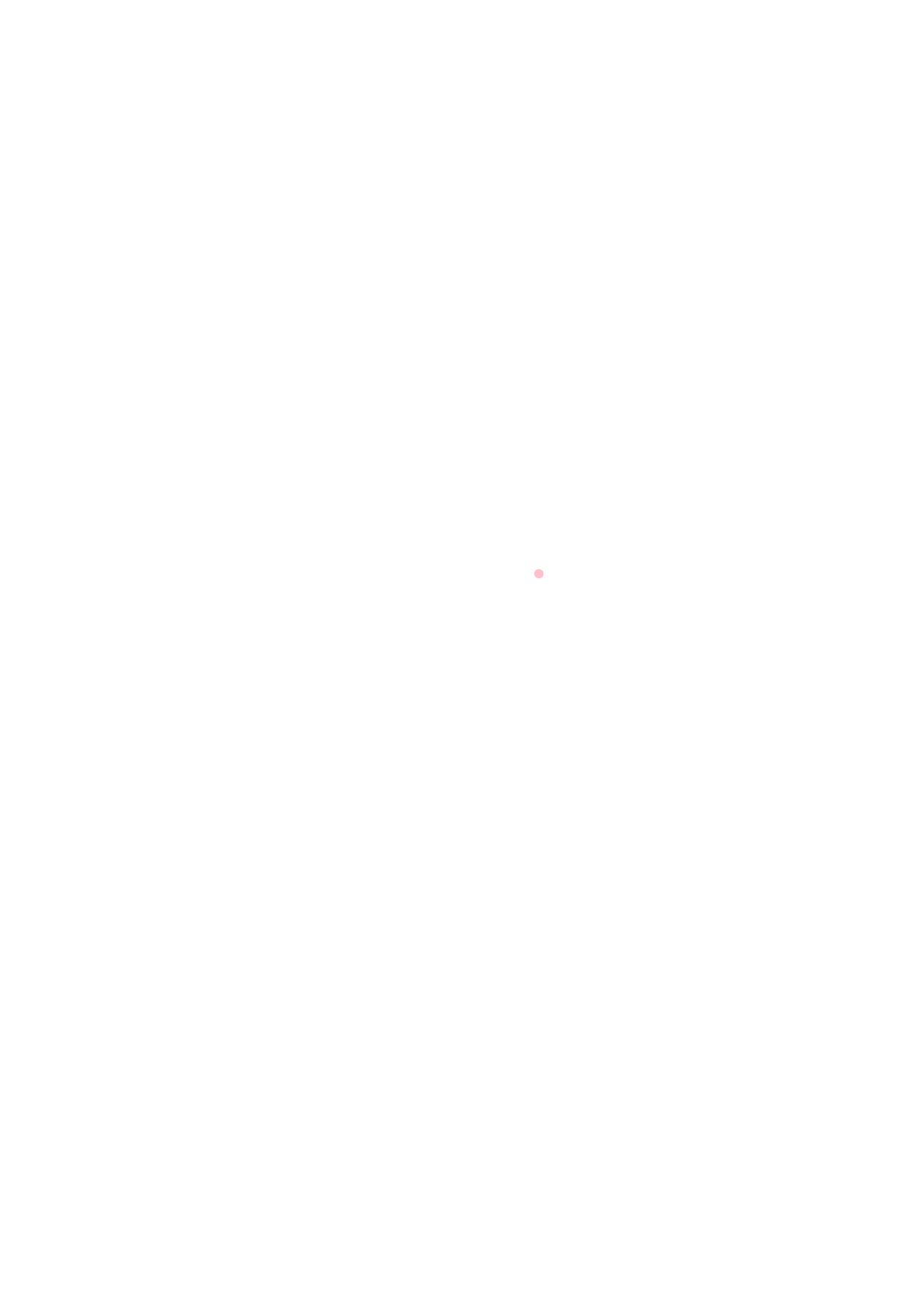} -- will become important later, but may be ignored for now.) Arrows are used to denote the input permutation $\sigma$ so here we see that $\sigma=(5\,\,21)(8\,\,10)(16\,\,20)(3\,\,12\,\,22)(1\,\,6\,\,17\,\,18\,\,19\,\,2\,\,13\,\,25\,\,24\,\,9\,\,7\,\,15\,\,14\,\,4\,\,11\,\,23)$. Since the first pair of transpositions $(5\,21)(8\,10)$ involve one red and three white particles, we deduce that $A_0=\{1\}$; similarly, $A_1=\{0\}$. Looking at the large cycle $\rho_2$, we see that both of the sets $\rho_2^{H_1}(1)=\{1,6,24,9\}$ and $\rho_2^{H_3}(1)=\{19,2,14,4\}$ contain a 3:1 split of reds:whites or whites:reds, and so $A_2=\{1,3\}$. Putting these together we arrive at $A=(\{1\},\{0\},\{1,3\})$.}\label{fig:constructing_A}
	
\end{figure}

\begin{figure}[!ht]
	
	\centering
	\includegraphics[scale=1.2]{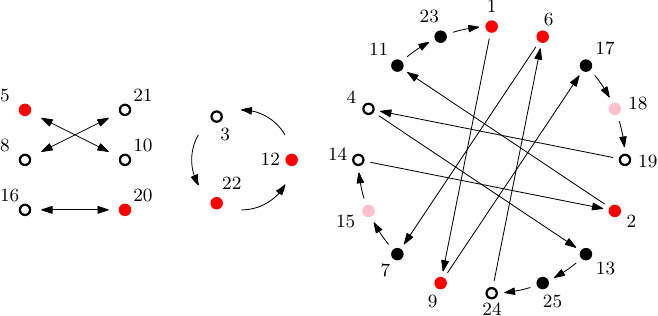}
	\caption{A pictorial description of the result of using the set A derived in Figure~\ref{fig:constructing_A} to construct the twin permutation $\tilde\sigma=\tilde\rhofn_{A(R,W,\sigma)}(\sigma)$; here we see that $\tilde\sigma=(5\,\,10)(8\,\,21)(16\,\,20)(3\,\,22\,\,12)(1\,\,9\,\,17\,\,18\,\,19\,\,4\,\,13\,\,25\,\,24\,\,6\,\,7\,\,15\,\,14\,\,2\,\,11\,\,23)$.}\label{fig:constructing_A_tilde_sigma}
	
\end{figure}

We conclude this section by showing that this construction of $A$
satisfies the conditions of
Corollary~\ref{C:algorithm_random_input}. This in turn guarantees that 
the permutation $\tilde\sigma=\tilde\rhofn_{A(\sigma)}(\sigma)$ has the same law on $\cS_e$ as
$\sigma$.

\begin{lemma}\label{L:Ahasproperty}
	Fix an edge $e\in E$. For any disjoint subsets $R,W$ of $V$ and
	permutation $\sigma\in \cS_e$, the functions $A_i(R,W,\rho_i)$ defined in \eqref{E:A_i_defn2} and \eqref{E:A_i_defn3} satisfy
	\[
	A_i(R,W,\rhofn_{A_i(R,W,\rho_i)}(\rho_i))=A_i(R,W,\rho_i).
	\]
\end{lemma}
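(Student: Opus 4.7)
The crucial observation is that each of $A_i(R,W,\rho_i)$ in \eqref{E:A_i_defn2} (and similarly $A_0(R,W,\rho_0)$ in \eqref{E:A_i_defn3}) depends only on the \emph{unordered} four-element set $\rho_i^{H_j}(m_i)$ (respectively $\{a_{4j-3},a_{4j-2},a_{4j-1},a_{4j}\}$) for each $j \in [d_i']$. Hence it suffices to prove the invariance statement: for each $j \in [d_i']$, the relevant set under $\tau_i := \rhofn_{A_i(R,W,\rho_i)}(\rho_i)$ coincides with that under $\rho_i$. Once this is established, the defining condition of $A_i$ will have the same truth value at $\tau_i$ as at $\rho_i$ for every $j$, yielding $A_i(R,W,\tau_i)=A_i(R,W,\rho_i)$.

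\textbf{The cycle case ($d_i\ge 4$).} First I would unpack the definition \eqref{e:beta_fn}: for any $\ell \in A_i$ and any $j\in [d_i']$, the indices in $H_j$ are either fixed pointwise by $\rhofn_\ell$ (when $\ell\neq j$, using the disjointness of the $H_k$'s), or are permuted among themselves (when $\ell=j$, since $\rhofn_j$ only swaps $2j-1\leftrightarrow 2d_i'+2j-1$ within $H_j$). Composing over $\ell \in A_i$ (which is legitimate by Lemma~\ref{L:rho_function_props}), one concludes $\tau_i^{H_j}(m_i) = \rho_i^{H_j}(m_i)$ \emph{as a set} for every $j$, which is exactly what we need.

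\textbf{The 3-cycle case ($d_i=3$).} Here $[d_i']=\{0\}$ and $H_0=\{1,2,3\}$, so $\rho_i^{H_0}(m_i)$ is just the full three-element orbit of $\rho_i$. Since $\rhofn_0(\rho_i)=\rho_i^2$ has the same orbit, this set is trivially preserved.

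\textbf{The transposition case ($\rho_0$).} For each $i\in A_0$ satisfying the activation condition $a_{4i-1}<a_{4i-2}$, multiplication by $(a_{4i-3}\,a_{4i-1})(a_{4i-2}\,a_{4i})$ turns the two consecutive transpositions $(a_{4i-3}\,a_{4i-2})(a_{4i-1}\,a_{4i})$ into $(a_{4i-3}\,a_{4i})(a_{4i-1}\,a_{4i-2})$. I would check that the chain of inequalities $a_{4i-3}<a_{4i-1}<a_{4i-2}<a_{4i}$ (following from the ordering conventions and the activation condition) forces the relabelled entries to satisfy $a'_{4i-3}=a_{4i-3}$, $a'_{4i-2}=a_{4i}$, $a'_{4i-1}=a_{4i-1}$, $a'_{4i}=a_{4i-2}$, so that the two new transpositions still occupy positions $2i-1$ and $2i$ in the canonical ordering. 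In particular, $\{a'_{4i-3},a'_{4i-2},a'_{4i-1},a'_{4i}\}=\{a_{4i-3},a_{4i-2},a_{4i-1},a_{4i}\}$; and for $j\neq i$ the elements at positions $2j-1,2j$ are untouched. This proves the required set-invariance for $A_0$.

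The only part that needs real care — and which I expect to be the main (if minor) obstacle — is the canonical relabelling in the transposition case: one must verify that applying $\rhofn_i$ does not disturb the global $\prec$-ordering of the transpositions, so that the ``pair index'' $i$ still refers to the same four elements after the operation. Everything else follows from the disjointness of the $H_j$'s and the definition of $\rhofn_j$.
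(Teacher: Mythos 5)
Your proof is correct and takes essentially the same route as the paper's: both arguments reduce to the observation that $\rhofn_{A_i}$ permutes the relevant three- or four-element set (the one whose red/white split determines membership of $j$ in $A_i$) amongst itself, so the colour count within each such set, and hence the membership condition, is invariant. For cycles you use the disjointness of the $H_j$'s to conclude $\rhofn_{A_i}(H_j)=H_j$ and therefore $\tau_i^{H_j}(m_i)=\rho_i^{H_j}(m_i)$ as a set; the paper argues the same thing by listing which index goes where.

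One caution in your treatment of $\rho_0$: the claimed chain $a_{4i-3}<a_{4i-1}<a_{4i-2}<a_{4i}$ does not in fact follow from the conventions. The $\prec$-ordering and within-transposition ordering give $a_{4i-3}<a_{4i-1}<a_{4i}$ and $a_{4i-3}<a_{4i-2}$, and the activation condition adds $a_{4i-1}<a_{4i-2}$, but nothing compares $a_{4i-2}$ with $a_{4i}$ (take, e.g., $(1\,5)(2\,3)$ so that $a_{4i-2}=5>a_{4i}=3$). This is harmless: your stated relabelling $a'_{4i-3}=a_{4i-3},\ a'_{4i-2}=a_{4i},\ a'_{4i-1}=a_{4i-1},\ a'_{4i}=a_{4i-2}$ follows already from $a_{4i-3}<a_{4i-1}<a_{4i}$ (which orders the first new transposition and fixes the relative order of the two new transpositions, since the two minima $a_{4i-3}$ and $a_{4i-1}$ are unchanged) together with the activation condition (which orders the second new transposition), and for the lemma you only need the resulting set equality $\{a'_{4i-3},\dots,a'_{4i}\}=\{a_{4i-3},\dots,a_{4i}\}$. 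But the spurious inequality $a_{4i-2}<a_{4i}$ should not be asserted.
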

\begin{proof}
	The idea here is that, as in part 3 of Corollary~\ref{C:tilde_rho_A_props}, $\rhofn_{A_i(R,W,\rho_i)}(\rho_i)(x) = \rho_i(x)$ unless $x$ belongs to a special set of three or four elements (whose exact definition depends upon the conjugacy class of $\rho_i$). Furthermore, $\rhofn_{A_i(R,W,\rho_i)}$ permutes all elements of such a special set amongst themselves, and so the numbers of red and white vertices within the set are unchanged by the action of $\rhofn_{A_i(R,W,\rho_i)}$. (See Figure~\ref{fig:constructing_A_tilde_sigma} again for a pictorial example.) 
	
	We provide some details here for the case when $\rho_i$ is a cycle of length $d_i\ge 4$: the arguments for 3-cycles and $\rho_0$ are similar.	
	Suppose $j\in A_i(R,W,\rho_i)$. 
	Without loss of generality, suppose that
	\begin{align*}
	\rho_i^{2j-2}(m_i)&\in W,\quad\rho_i^{2d_i'+2j-2}(m_i)\in R,\\
	\rho_i^{2j-1}(m_i)&\in W,\quad\rho_i^{2d_i'+2j-1}(m_i)\in W.
	\end{align*}
	Since $j\in A_i(R,W,\rho_i)$, from equation~\eqref{e:rho_A_defn} we deduce that
	\begin{align*}
	\rhofn_{A_i(R,W,\rho_i)}(\rho_i)^{2d_i'+2j-2}(m_i)&=\rho_i^{2j-2}(m_i)
	\in W,\\
	\rhofn_{A_i(R,W,\rho_i)}(\rho_i)^{2j-2}(m_i)&=\rho_i^{2d_i'+2j-2}(m_i)
	\in R,\\
	\rhofn_{A_i(R,W,\rho_i)}(\rho_i)^{2d_i'+2j-1}(m_i)&=\rho_i^{2j-1}(m_i)
	\in W,\\
	\rhofn_{A_i(R,W,\rho_i)}(\rho_i)^{2j-1}(m_i)&=\rho_i^{2d_i'+2j-1}(m_i)
	\in W.
	\end{align*}
	Therefore $k\in A_i(R,W, \rhofn_{A_i(R,W,\rho_i)}(\rho_i))$.
	The other cases follow similarly. This shows that $A_i(R,W,\rho_i)\subseteq A_i(R,W,
	\rhofn_{A_i(R,W,\rho_i)}(\rho_i))$,  but an identical argument
	shows the reverse implication and we deduce the result.
\end{proof}

\subsection{Construction of the chameleon process}\label{S:chameleon_construction}
In this section we detail the construction of the chameleon process. The connection to the algorithm described in the previous section to generate $\tilde\sigma$ from $\sigma$ will be made clear in Lemma~\ref{L:connection}. In order to deal with edges of size 2, it will be convenient to modify the graphical construction of IP$(k,f,G)$ introduced in Section \ref{S:graphical}, by doubling the rate at which edges ring, and compensating for this by making the process lazy.

More formally, consider the following ingredients:
\begin{enumerate}
	\item a Poisson process $\Lambda$ of rate $2|E|$;
	\item an i.i.d. sequence of $E$-valued random variables $\{e_n\}_{n\in\mathbb{N}}$;
	\item an i.i.d. sequence of permutations $\{\sigma_n\}_{n\in\mathbb{N}}$ with $\sigma_n\in\mathcal{S}_{e_n}$ for each $n\in\mathbb{N}$ and with $\P\bra{\sigma_n=\sigma}=f_{e_n}(\sigma)$;
	\item an i.i.d. sequence of coin flips $\{\theta_n\}_{n\in\mathbb{N}}$ with $\P\bra{\theta_n=1}=\P\bra{\theta_n=0}=1/2$.
\end{enumerate}
We now define $\sigma_n^{\theta_n}$ to equal $\sigma_n$ if $\theta_n=1$ and to be the identity if $\theta_n=0$.
We modify the definition of the maps $I_{[s,t]}$ from Section \ref{S:graphical} as follows: 
\[
I_{[s,t]}=\sigma^{\theta_{\Lambda[0,t]}}_{e_{\Lambda[0,t]}}\circ\sigma^{\theta_{\Lambda[0,t]-1}}_{e_{\Lambda[0,t]-1}}\circ\cdots\circ\sigma^{\theta_{\Lambda[0,s)+1}}_{e_{\Lambda[0,s)+1}}.
\]
The thinning property of Poisson processes implies that the joint distribution of the maps $I_{[s,t]}$, $0\le s\le t<\infty$, is the same as in Section \ref{S:graphical}.
The chameleon process will be built on top of this modified interchange process.

\subsubsection{Formal description of the chameleon process}

Given a $(k-1)$-tuple ${\bf z}\in (V)_{k-1}$, recall that ${\bf O}({\bf
	z}):=\{{\bf z}(1),\ldots,{\bf z}(k-1)\}$ denotes the (unordered) set of
coordinates of ${\bf z}$. The chameleon process is a continuous-time
Markov process with state-space
\[\Omega_k(V):=
\{({\bf z},R,P,W):\,{\bf z}\in (V)_{k-1},\,\text{and sets ${\bf
		O}({\bf z}), R, P, W$ partition $V$}\}.
\]
A particle at vertex $v$ is said to be \emph{red} if $v\in R$,
\emph{white} if $v\in W$, \emph{pink} if $v\in P$ and \emph{black} if
$v\in {\bf O}({\bf z})$. Note that, due to the nature of the
state-space, we can distinguish between the various black particles,
whereas any two red/white/pink particles are indistinguishable from each
other.

We denote the state at time $t$ of the chameleon process started from
$M_0=({\bf z},R,P,W)$ as
\[
M_t=({\bf z}_t,R_t,P_t,W_t).
\]
We say that a particle at vertex $v$ at time $t$ is \emph{red at time
	$t$} if $v\in R_t$ (and similarly for the other colours). We define
now a notion of \emph{ink}, which represents the amount of
\emph{redness} either at a vertex or in the whole system. A vertex $v$
has 1 unit of ink at time $t$ if $v\in R_t$ and half a unit if $v\in
P_t$. Formally then, we define for each $v\in V$ and $t\ge0$,
\begin{equation}\label{e:ink_defn}
\mathrm{ink}_t(v):=\indic{v\in R_t}+\frac1{2}\indic{v\in P_t}.
\end{equation}

We are now able to complete our formal definition of the chameleon
process corresponding to an interchange process on a
hypergraph. 
We set $T=20T_{\mathrm{EX}(4,f,G)}$, and call $T$ the \emph{phase length}. As stated previously, the chameleon process is
time-inhomogeneous, and behaves differently depending on which phase
we are in. There will be just two different kinds of phase: those in
which no colour-changing is permitted and particles are just moved
around the graph according to the underlying interchange process; and
those in which colour-changing (pinkening of red and white particles)
can occur. Furthermore, there will be (deterministic) times at which
\emph{depinking} can occur. To be more precise, the chameleon process is updated at the incident
times $\{\tau_n\}$ of the Poisson process $\Lambda$ and also at deterministic times $2iT$,
$i\in\mathbb{N}$.

To describe which particles are pinkened during a colour-changing
phase, let $\sigma$ ($=\sigma_n$) be the permutation applied to some
edge $e$ ($=e_n)$ at time $t=\tau_n$ and once again recall the cyclic decomposition from \eqref{eq:decomp}:
\[
\sigma=\rho_0\circ\rho_1\circ\cdots\circ\rho_K.
\]
Given that $t$ is in a colour-changing phase, we define subsets of
$V$ in the following way. 

For cycles $\rho_i$ with $d_i=3$, recall that $A_i(R_{t-},W_{t-},\rho_i)$ is either equal to $\{0\}$ or $\varnothing$. For $j\in A_i(R_{t-},W_{t-},\rho_i)$ we define
\[
L_t^{i,j}:=\begin{cases}
\{r,\rho_i(r)\} &\mbox{ if }\rho_i^{\{0,1,2\}}(m_i)=\{r,w_1,w_2\}\mbox{ for some }r\in R_{t-},w_1,w_2\in W_{t-},\\
\{w,\rho_i(w)\}&\mbox{ if }\rho_i^{\{0,1,2\}}(m_i)=\{w,r_1,r_2\}\mbox{ for some }w\in W_{t-},r_1,r_2\in R_{t-}.
\end{cases}
\]
We note that, by construction, if $j=0$ then $L_t^{i,j}$ contains two vertices, with one in $R_{t-}$ and the other in $W_{t-}$. 

For cycles $\rho_i$ with $d_i\ge 4$, we define a set $L_t^{i,j}$ for each $j\in A_i(R_{t-},W_{t-},\rho_i)$ as follows:
\begin{itemize}
	\item if
	$|\rho_i^{H_j}(m_i)\cap R_{t-}|=1$ (so one vertex in the set $\rho_i^{H_j}(m_i)$ contains a red particle,  and the other three contain white particles), then set \[
	L_t^{i,j}:=\begin{cases}
	\{\rho_i^{2j-2}(m_i),\rho_i^{2d_i'+2j-2}(m_i)\}\quad&\mbox{ if } |\{\rho_i^{2j-2}(m_i),\rho_i^{2d_i'+2j-2}(m_i)\}\cap R_{t-}|=1,\\
	\{\rho_i^{2j-1}(m_i),\rho_i^{2d_i'+2j-1}(m_i)\}&\mbox{ otherwise.}
	\end{cases}
	\]
	\item alternatively, if
	$|\rho_i^{H_j}(m_i)\cap W_{t-}|=1$ (so one vertex in the set $\rho_i^{H_j}(m_i)$ contains a white particle, and the other three contain red particles), then set \[
	L_t^{i,j}:=\begin{cases}
	\{\rho_i^{2j-2}(m_i),\rho_i^{2d_i'+2j-2}(m_i)\}\quad&\mbox{ if } |\{\rho_i^{2j-2}(m_i),\rho_i^{2d_i'+2j-2}(m_i)\}\cap W_{t-}|=1,\\
	\{\rho_i^{2j-1}(m_i),\rho_i^{2d_i'+2j-1}(m_i)\}&\mbox{ otherwise.}
	\end{cases}
	\]
\end{itemize}
Once again, this ensures that $L_t^{i,j}$ contains two vertices, one in $R_{t-}$ and the other in $W_{t-}$.

For $\rho_0$ (the product of disjoint transpositions), we proceed similarly. For each $j\in A_0(R_{t-},W_{t-},\rho_0)$ satisfying $a_{4j-1}<a_{4j-2}$, we define $L_t^{0,j}$ as follows: 
\begin{itemize}
	\item if $|\{a_{4j-3},a_{4j-2},a_{4j-1},a_{4j}\}\cap R_{t-}|=1$, then set
	\[
	L_t^{0,j}:=\begin{cases}
	\{a_{4j-3},a_{4j-1}\}&\mbox{ if }|\{a_{4j-3},a_{4j-1}\}\cap R_{t-}|=1,\\
	\{a_{4j-2},a_{4j}\}&\mbox{ otherwise};
	\end{cases}
	\]
	\item alternatively, if $|\{a_{4j-3},a_{4j-2},a_{4j-1},a_{4j}\}\cap W_{t-}|=1$, then set
	\[
	L_t^{0,j}:=\begin{cases}
	\{a_{4j-3},a_{4j-1}\}&\mbox{ if }|\{a_{4j-3},a_{4j-1}\}\cap W_{t-}|=1,\\
	\{a_{4j-2},a_{4j}\}&\mbox{ otherwise}.
	\end{cases}
	\]
\end{itemize}
(If $a_{4j-1}>a_{4j-2}$ then set $L_t^{0,j} = \varnothing$.)

We then let
\[ L_t:=\bigcup_{i=0}^K \bigcup_{j\in A_i(R_{t-},W_{t-},\rho_i)}L_t^{i,j} \,.   \]

Recall the example in Figure~\ref{fig:constructing_A} (and suppose $R=R_{t-}$ and $W=W_{t-}$). Here we obtain $L_t^{0,1}=\{5,8\}$, $L_t^{1,0}=\{3,12\}$, $L_t^{2,1}=\{1,24\}$ and $L_t^{2,3}=\{2,4\}$, and hence $L_t=\{1,2,3,4,5,8,12,24\}$.

The particles at the pairs of vertices selected in this way are those that we
wish to pinken at time $t$. However, it turns out to be useful to limit the number of pinkenings that can occur (during a single colour-changing phase) so that the total number of pinks cannot exceed either the number of reds or the number of whites (this will be crucial to be able to appeal directly to a result of \citep{Oliveira2013a} in the proof of Lemma~\ref{L:inktodepink}).
In order to achieve this, we pick (arbitrarily) 
a subset $L_t^\ast$ of 
\[ \left\{L_t^{i,j}\,:\, i=0,1,\dots,K,\,\, j\in A_i(R_{t-},W_{t-},\rho_i) \right\}\]
with the property that $|L_t^\ast|$ is as large as possible while still satisfying
\[
|P_{t-}|+2|L_t^\ast|\le \min\{|R_t|,|W_t|\}=\min\{|R_{t-}|,|W_{t-}|\} - |L_t^\ast|,
\]
i.e. \[|L_t^\ast|\le \frac1{3}\left(\min\{|R_{t-}|,|W_{t-}|\}-|P_{t-}|\right).\]

Note that $L_t^\ast$ is a set of pairs of vertices, with each pair containing one red and one white particle. Finally, we let $\bar L_t$ be the union of the elements of $L_t^\ast$.

It is precisely the particles at vertices in $\bar L_t$ that we will
pinken at time $t$.

\begin{framed}
	\begin{inbox}{\bf Formal description of chameleon process updates}\label{box:cham_process} \\
		There are three kinds of updates to the chameleon process -- which
		update is performed at time $t$ depends on the value of $t$.
		
		\textbf{Constant-colour phases:} For $t=\tau_n\in(2(i-1
		)T,(2i-1)T]$, update as the interchange process:
		\[
		({\bf z}_t,R_t,P_t,W_t)=(\sigma_n^{\theta_n}({\bf
			z}_{t-}),\sigma_n^{\theta_n}(R_{t-}),\sigma_n^{\theta_n}(P_{t-}),\sigma_n^{\theta_n}(W_{t-}))\,.
		\]
		
		\textbf{Colour-changing phases:} For $t=\tau_n\in((2i-1)T,2iT]$,
		update as interchange (i.e. update as in a constant-colour phase)
		unless $|P_{t-}|<\min\{|R_{t-}|,|W_{t-}|\}$ and we are in one of the following two situations:
		\begin{itemize}
			\item $|e_n|>2$, $\theta_n=1$ and $\sigma_n\neq id$. \\In this case, we
			pinken all particles in the set $\bar L_t$ (half of which belong
			to $R_{t-}$, the others to $W_{t-}$, by design).\\
			Regardless of which particles are pinkened, we then update as the
			interchange process at this time (using $\sigma_n$). Formally
			then, we update as
			\begin{align*}
			&({\bf z}_t,R_t,P_t,W_t)\\&\quad=(\sigma_n({\bf
				z}_{t-}),\sigma_n(R_{t-}\setminus(\bar L_t\cap
			R_{t-})),\sigma_n(P_{t-}\cup \bar
			L_t),\sigma_n(W_{t-}\setminus(\bar L_t\cap W_{t-}))).
			\end{align*}
			\item $e_n=\{w,r\}$ for some $w\in W_{t-}$, $r\in R_{t-}$, and $\sigma_n\neq id$. \\In this case, we pinken both particles on the edge. Formally update as 
			\[
			({\bf z}_t,R_t,P_t,W_t)=({\bf z}_{t-},R_{t-}\setminus\{r\},P_{t-}\cup\{r,w\},W_{t-}\setminus\{w\}).
			\]
		\end{itemize}

		\textbf{Depinking:} For $t=2iT$ with $i\in\mathbb{N}$, if
		$|P_{t-}|\ge\min\{|R_{t-}|,|W_{t-}|\}$ then we generate a Bernoulli($1/2$) random variable
		$Y_i$: if $Y_i=1$, we colour all pink particles red,
		otherwise we colour all pink particles white. Hence the update is
		\[
		({\bf z}_t,R_t,P_t,W_t)=
		\begin{cases}({\bf z}_{t-},R_{t-}\cup P_{t-},\varnothing,W_{t-})&\mbox{if }Y_i=1,\\
		({\bf z}_{t-},R_{t-},\varnothing,W_{t-}\cup P_{t-})&\mbox{if
		}Y_i=0.
		\end{cases}
		\]
	\end{inbox}
\end{framed}

Recall again the example from Figure~\ref{fig:constructing_A}, and suppose this represents the state at time $t-$ of a chameleon process. Then Figure~\ref{fig:chameleonstep} represents the state at time $t$ (assuming that $t$ belongs to a colour-changing phase, and that the associated random variable $\theta$ equals 1).

\begin{figure}[!ht]
	\centering
	\includegraphics[scale=1.2]{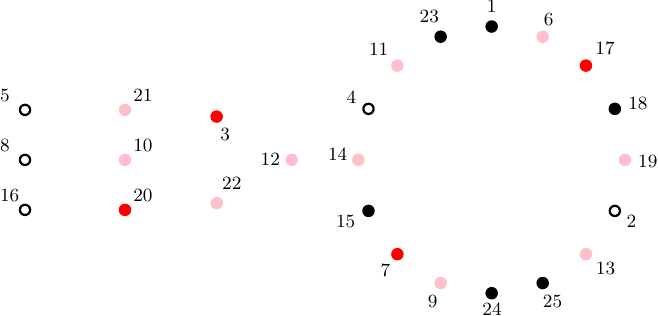}
	\caption{The result of updating the chameleon process from the state pictured in Figure~\ref{fig:constructing_A}. Particles belonging to the set $L_t = \{1,2,3,4,5,8,12,24\}$ have been pinkened, and then all particles have been moved according to the permutation $\sigma=(5\,\,21)(8\,\,10)(16\,\,20)(3\,\,12\,\,22)(1\,\,6\,\,17\,\,18\,\,19\,\,2\,\,13\,\,25\,\,24\,\,9\,\,7\,\,15\,\,14\,\,4\,\,11\,\,23)$. (The number of particles that we are allowed to pinken depends upon the values of $|R_{t-}|$ and $|W_{t-}|$ of course, but here we have assumed for simplicity that $\bar L_t =L_t$.)}\label{fig:chameleonstep}
\end{figure}

The connection between the permutations $\tilde\rhofn_A(\sigma_n)$, which we spent time developing in Section~\ref{s:new_from_old}, and the chameleon process is made explicit in the following lemma, which shall later be employed in the proof of part~2 of Lemma~\ref{L:existence}.
\begin{lemma}\label{L:connection}
	Suppose $t=\tau_n$ is in a colour-changing phase and $|e_n|>2$. Then there exists a permutation $g:V\to V$ such that both of the following statements hold:
	\begin{enumerate}
		\item for any vertex $u$ containing (at time $t-$) a particle which is pinkened at time $t$ in the chameleon process,
		\[
		u\in R_{t-}\quad\mbox{iff}\quad g(u)\in W_{t-}\quad\mbox{and}\quad u\in W_{t-}\quad\mbox{iff}\quad g(u)\in R_{t-} \,;
		\]
		\item for any vertex $u$ containing a particle which is not pinkened at time $t$, the particle at vertex $g(u)$ at time $t-$ has the same colour at time $t$ as the particle at vertex $u$ at time $t-$.
	\end{enumerate}
	
	Moreover, we can take $g$ to satisfy
	\[
	\tilde\rhofn_{A(R_{t-},W_{t-},\sigma_n)}(\sigma_n)(g(u))=\sigma_n(u).
	\]
\end{lemma}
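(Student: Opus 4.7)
\emph{The plan is} to take $g:=\tilde\sigma^{-1}\circ\sigma_n$ where $\tilde\sigma:=\tilde\rhofn_{A(R_{t-},W_{t-},\sigma_n)}(\sigma_n)$. This choice is a genuine permutation of $V$ (extending by the identity off $e_n$) and immediately satisfies the ``Moreover'' relation $\tilde\sigma(g(u))=\sigma_n(u)$; it remains only to verify parts 1 and 2, which I will do cycle-by-cycle using the decomposition $\sigma_n=\rho_0\circ\rho_1\circ\cdots\circ\rho_K$.

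First, part 3 of Corollary~\ref{C:tilde_rho_A_props} implies that $\tilde\sigma$ agrees with $\sigma_n$ everywhere except on the distinguished sets $\rho_i^{H_j}(m_i)$ for $j\in A_i$ and $i\ge 1$, together with the sets $\{a_{4j-3},a_{4j-2},a_{4j-1},a_{4j}\}$ for $j\in A_0$ with $a_{4j-1}<a_{4j-2}$. Consequently $g(u)=u$ for every $u$ outside these sets. By \eqref{E:A_i_defn2}--\eqref{E:A_i_defn3} every vertex appearing in any such set is red or white at time $t-$, so every vertex carrying a black particle automatically satisfies $g(u)=u$ and both claims hold trivially there.

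Now fix a long cycle $\rho_i$ (case $d_i\ge 4$) and an index $j\in A_i$, and write $x_\ell=\rho_i^\ell(m_i)$. A short direct calculation from \eqref{e:rho_i_defn} and \eqref{e:beta_fn} shows that the restriction of $g$ to $\rho_i^{H_j}(m_i)=\{x_{2j-2},x_{2j-1},x_{2d_i'+2j-2},x_{2d_i'+2j-1}\}$ is precisely the product of the two transpositions $(x_{2j-2}\,\,x_{2d_i'+2j-2})(x_{2j-1}\,\,x_{2d_i'+2j-1})$ (and is the identity elsewhere on the orbit). The membership condition in \eqref{E:A_i_defn2} forces these four vertices to carry three particles of one colour and one of the other, and the definition of $L_t^{i,j}$ picks out whichever of the above two pairs contains exactly one red and one white vertex; the remaining pair then consists of two vertices of the shared majority colour. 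Hence whichever transposition acts on $L_t^{i,j}$ swaps a red with a white, delivering property 1 on the pinkened pair, while the other transposition swaps two vertices of the same colour, delivering property 2 on the non-pinkened pair (using the fact that non-pinkened particles keep their colour through the update). The 3-cycle case (using \eqref{e:rho_i_defn2}) and the transposition case $\rho_0$ (using Lemma~\ref{L:hat_rho_A_props} to handle the involutive modification, restricted to those $j$ with $a_{4j-1}<a_{4j-2}$) are completely analogous.

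The one step requiring care, and which I expect to be the main obstacle in a fully rigorous write-up, is that the chameleon process pinkens only the restricted subset $\bar L_t$ determined by $L_t^\ast\subseteq\{L_t^{i,j}:\,j\in A_i\}$, whereas the definition \eqref{e:final_A} of $A(R_{t-},W_{t-},\sigma_n)$ carries no such restriction. For the stated $g$ to satisfy property~2 on pairs $L_t^{i,j}$ excluded from $L_t^\ast$ (where $g$ would otherwise swap a red and a white without the corresponding pinkening having occurred), one must interpret the ``$A$'' in the lemma as the restriction of \eqref{e:final_A} to indices $(i,j)$ with $L_t^{i,j}\in L_t^\ast$; with this reading $\tilde\sigma$ differs from $\sigma_n$ only on vertices actually pinkened at time $t$, and the per-cycle analysis above assembles to give both required properties simultaneously.
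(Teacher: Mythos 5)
Your choice $g=\tilde\sigma^{-1}\circ\sigma_n$ is exactly the permutation the lemma has in mind, and since the paper's own proof is a one-liner (``follows simply by comparing the constructions''), what you have written is precisely the detail the authors omit. The verification for a cycle of length $d_i\ge 4$ is correct: from Remark~5.5 and \eqref{e:rho_A_defn} one checks, as you do, that $g$ restricted to the orbit of $\rho_i$ is the product $(x_{2j-2}\,\,x_{2d_i'+2j-2})(x_{2j-1}\,\,x_{2d_i'+2j-1})$ on each block $\rho_i^{H_j}(m_i)$ with $j\in A_i$ and is the identity elsewhere; the $3{:}1$ colour split then makes one of these two transpositions exchange the unique red--white pair (property~1) and the other exchange two vertices of the common majority colour (property~2). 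The $\rho_0$ case goes through the same way. Your flag on the $\bar L_t$ versus $L_t$ distinction is a genuine observation and, I think, the right fix: the $g$ defined from the unrestricted $A(R_{t-},W_{t-},\sigma_n)$ would swap red with white on pairs $L_t^{i,j}\notin L_t^\ast$, where no pinkening occurs, violating property~2. To close this you should note (it is implicit in Lemma~5.19) that $\rhofn_{A_i}$ permutes each selected block $\rho_i^{H_j}(m_i)$ within itself and preserves the two pairs, so that a \emph{deterministic} rule for choosing $L_t^\ast$ selects identical vertex-pairs from $\sigma_n$ and from $\tilde\rhofn_{A^\ast}(\sigma_n)$; hence the restricted $A^\ast$ still satisfies the hypothesis of Corollary~5.14 and the ``Moreover'' clause (with $A^\ast$ in place of $A$) remains meaningful for the application in Lemma~3.2.

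There is one further gap. You declare the $d_i=3$ case ``completely analogous'', but it is not: there $\tilde\sigma=\rho_i^2$ so $g$ restricted to the orbit is $\rho_i^{-1}$, a $3$-cycle and not a product of two disjoint transpositions, and $g$ sends the single \emph{non}-pinkened vertex into $L_t^{i,0}$ (e.g.\ with one red $r$ and whites $w_1=\rho_i(r)$, $w_2=\rho_i^2(r)$, we have $L_t^{i,0}=\{r,w_1\}$ and $g(w_2)=w_1$). Your justification for property~2, ``non-pinkened particles keep their colour through the update'', then no longer applies, because $g(u)$ \emph{is} pinkened. The conclusion is nevertheless true under the reading of property~2 that the application in Lemma~3.2 requires, namely that $u$ and $g(u)$ carry the same colour \emph{at time $t-$} (here both white); you should state the verification for $d_i=3$ separately in these terms rather than by analogy with the four-element case.
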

\begin{proof}
	This follows simply by comparing the construction of $\tilde\rhofn_{A(R_{t-},W_{t-},\sigma_n)}(\sigma_n)$ with the construction of the chameleon process.
\end{proof}

\section{Properties of the chameleon process}\label{S:chamhasprops}
In this section we show that the chameleon process satisfies the properties outlined in Lemma~\ref{L:existence}. Part~\ref{item:z_same_path} follows immediately from the construction of the chameleon process, since each black particle moves identically in the chameleon process and the underlying interchange process. 

In order to prove the other three parts, we will need to understand the evolution
of the total amount of ink in the chameleon process. We first of all
note that the number of pink particles accumulates over time until we
have a large number of them; at the next depinking time all pink particles are
recoloured (either red or white) and the process of accumulation
starts again. The process will continue in this manner until either we
have no white particles or we have no red particles (which will occur
immediately after some depinking). At this point, no more pink
particles can be made and so there is no more recolouring of
particles. 
In order to bound the mixing time of the interchange process we need a
good understanding of how quickly the chameleon process reaches the
state where no more recolouring can occur. There are two factors which
affect this: the time we must wait between depinking events and how
the process behaves at depinking times.

Writing ${\bf x}=({\bf z},x)$, for each $j\in \mathbb{N}$ let $D_j({\bf x})$ denote the $j$th
depinking time of a chameleon process started from state $({\bf
	z},\{x\},\varnothing,V\setminus({\bf O}({\bf
	z})\cup\{x\}))\in\Omega_k(V)$. Let
${\mathrm{ink}}_t^{\bf x}$ denote the total amount of ink in the
process at time $t$; note that $0\le\mathrm{ink}_t^{\bf x}\le
|V|-k+1$. Motivated by \cite{Oliveira2013a}, recall that in part~\ref{item:ink} of Lemma~\ref{L:existence} we defined the event
\[
\mathrm{Fill}^{\bf x}:=\left\{\lim_{t\to\infty}\mathrm{ink}_t^{\bf
	x}=|V|-k+1\right \}\,.\] This is the event that all
initially-white particles are eventually coloured red.
We shall make use of the following result concerning
${\mathrm{ink}}_t^{\bf x}$, whose proof may be found in
\citep{Oliveira2013a}. It is applicable in this setting because the event $\mbox{Fill}^{\bf x}$ is independent of ${\bf z}_t^C$ (as it depends only on the outcomes of coin-flips at depinking times and these do not affect ${\bf z}_t^C$) and because $\mbox{ink}_t^{\bf x}$ is a martingale (clear from the construction). We note also that this result is identical to Lemma~\ref{L:existence} part~\ref{item:fill_independent}, and thus serves as its proof.

\begin{prop}\label{P:inkfacts}Fix ${\bf x}=({\bf z},x)\in(V)_k$.
	For each ${\bf c}\in (V)_{k-1}$ and $t\ge0$,
	\[
	\P\bra{\{{\bf z}_t^C={\bf c}\}\cap\mathrm{Fill}^{\bf
			x}}=\frac{\P\bra{{\bf z}_t^C={\bf c}}}{|V|-k+1}.
	\]
\end{prop}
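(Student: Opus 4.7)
The plan is to show two things: (i) the event $\mathrm{Fill}^{\bf x}$ is independent of the entire black-particle trajectory $({\bf z}_s^C)_{s \ge 0}$, and (ii) $\P\bra{\mathrm{Fill}^{\bf x}} = 1/(|V|-k+1)$. Together these immediately yield
\[
\P\bra{\{{\bf z}_t^C={\bf c}\}\cap\mathrm{Fill}^{\bf x}} = \P\bra{{\bf z}_t^C={\bf c}}\cdot\P\bra{\mathrm{Fill}^{\bf x}} = \frac{\P\bra{{\bf z}_t^C={\bf c}}}{|V|-k+1}.
\]
Both pieces will be extracted at once from a single martingale argument performed under a suitable conditioning.

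Concretely, let $\mathcal{G}$ denote the $\sigma$-algebra generated by the graphical construction $(\Lambda, \{e_n\}, \{\sigma_n\}, \{\theta_n\})$ of Section~\ref{S:chameleon_construction}. By part~\ref{item:z_same_path} of Lemma~\ref{L:existence}, the trajectory $({\bf z}_s^C)_{s \ge 0}$ is $\mathcal{G}$-measurable, and the only further randomness driving the chameleon process is the sequence $\{Y_i\}$ of i.i.d. Bernoulli$(1/2)$ coins used at the deterministic depinking times $2iT$, which is independent of $\mathcal{G}$. One then checks that $(\mathrm{ink}_t^{\bf x})_{t \ge 0}$ is a martingale with respect to the filtration $\mathcal{G} \vee \sigma(Y_j : 2jT \le t)$: constant-colour updates and non-pinkening colour-changing updates leave the ink unchanged; a pinkening step (in either the size-2 or the larger-edge case) simultaneously recolours equal numbers of reds and whites to pink, so the total ink is preserved exactly; and at a depinking event with $p:=|P_{t-}|$ pinks, the ink changes by $\pm p/2$ depending on $Y_i$, with conditional mean zero. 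Since $\mathrm{ink}_0^{\bf x}=1$ and $0 \le \mathrm{ink}_t^{\bf x} \le |V|-k+1$, martingale convergence supplies a limit $L:=\lim_{t\to\infty}\mathrm{ink}_t^{\bf x}$ with $\E\bra{L \mid \mathcal{G}}=1$. Once one knows that $L \in \{0, |V|-k+1\}$ almost surely, it follows that $\P\bra{\mathrm{Fill}^{\bf x} \mid \mathcal{G}} = 1/(|V|-k+1)$; multiplying by the $\mathcal{G}$-measurable indicator $\indic{{\bf z}_t^C={\bf c}}$ and taking expectations then delivers the claim.

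The main obstacle is precisely the argument that $L \in \{0, |V|-k+1\}$ a.s., equivalently that the stopping time $\tau:=\inf\{t:\min(|R_t|,|W_t|)=0 \text{ and } |P_t|=0\}$ is almost surely finite, since beyond $\tau$ no further colour changes can occur. This is amenable to a Lyapunov/drift approach: during each colour-changing phase in which $\min(|R_{t-}|,|W_{t-}|)>0$, there is a probability bounded below (uniform in the state, using the irreducibility assumption~\ref{assumpf}(3) together with assumption~\ref{assumpf}(2) and the regularity of $G$) that a pinkening step occurs and that the subsequent depinking coin $Y_i$ strictly decreases $\min(|R|,|W|)$. Since this quantity is a bounded non-negative integer, it reaches $0$ in finite expected time, yielding $\P\bra{\tau<\infty}=1$. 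Alternatively one may note that this step mirrors the absorption argument of \citet{Oliveira2013a}, which relies only on the martingale structure and the absorbing nature of the boundary, and so can be imported with essentially no change.
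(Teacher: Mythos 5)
Your overall approach is the same as the paper's (which in turn invokes \citet{Oliveira2013a}): condition on the $\sigma$-algebra $\mathcal{G}$ generated by the graphical construction, observe that the black trajectory $({\bf z}_s^C)_{s\ge0}$ is $\mathcal{G}$-measurable, check that $\mathrm{ink}_t^{\bf x}$ is a bounded martingale with respect to $\mathcal{G}\vee\sigma(Y_j : 2jT\le t)$, and conclude $\P\bra{\mathrm{Fill}^{\bf x}\mid\mathcal{G}}=1/(|V|-k+1)$ once the limit $L$ is known to live in $\{0,|V|-k+1\}$ almost surely. The paper states only the two properties being used (independence of $\mathrm{Fill}^{\bf x}$ from $\mathbf{z}_t^C$, and the martingale property of $\mathrm{ink}$) and refers to \citet{Oliveira2013a} for the rest; your write-up makes the conditioning framework explicit, which is the right reconstruction and does a bit more than the paper's parenthetical justification.

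There is, however, a concrete error in the one step you correctly flag as the main obstacle, namely the absorption claim $L\in\{0,|V|-k+1\}$. You assert that ``the subsequent depinking coin $Y_i$ strictly decreases $\min(|R|,|W|)$.'' This is false: by the rules in Box~\ref{box:cham_process}, a depinking recolours all pinks red or all pinks white, so one of $|R|,|W|$ \emph{increases} by $|P_{t-}|$ while the other is unchanged and $|P|$ is reset to $0$; a direct check shows $\min(|R|,|W|)$ is weakly \emph{increasing} at every depinking. It is the pinkening steps that reduce $\min(|R|,|W|)$, since they remove equal numbers from $R$ and $W$. As a result, the Lyapunov computation for $\min(|R|,|W|)$ over a full pinkening-then-depinking cycle has zero drift when $|W|\gg|R|$ (half the time you regain what was pinkened, half the time you lose it), so the naive drift argument does not close.

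The argument that actually works — and which the paper imports from \citet{Oliveira2013a} via Lemma~\ref{L:depinkbound} — is closer to your final sentence than to your drift sketch: bounded martingale convergence forces the ink increments $\pm|P_{t-}|/2$ at nontrivial depinking events (each of magnitude $\ge 1/2$ with i.i.d.\ random signs) to vanish, hence there are a.s.\ only finitely many nontrivial depinkings. One must then show that if both $|R_t|>0$ and $|W_t|>0$ persisted forever, pinkenings would continue to occur at a per-phase rate bounded below (this is exactly where Lemma~\ref{L:lossofred} and the regularity/irreducibility hypotheses enter; it is not an ``essentially no change'' import, since the pinkening mechanism here is quite different from the graph case), and these would accumulate to trigger further nontrivial depinkings — a contradiction. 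So the framework and conclusion are right, but the drift mechanism as you stated it is incorrect, and the positive-rate-of-pinkening input from the hypergraph assumptions is doing real work that your sketch glosses over.
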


Consider now the expectation on the right-hand side of the statement
of Lemma~\ref{L:TVtoink}: an identical argument to that in Section 6
of \citep{Oliveira2013a} shows that this can be bounded in terms of the
tail probability of the time of the $j$th depinking.
\begin{lemma}\label{L:inktodepink}There exist positive constants $c_1$
	and $c_2$ such that for every $j\in\mathbb{N}$,
	\[
	\sup_{{\bf x}\in (V)_k}\E\left[1-\frac{\mathrm{ink}_t^{\bf
			x}}{|V|-k+1}\Big|\,\mathrm{Fill}^{\bf x}\right]\le
	c_1\sqrt{|V|}e^{-c_2j}+\sup_{{\bf x}\in (V)_k}\P\bra{D_j({\bf x})>
		t\,|\,\mathrm{Fill}^{\bf x}}\,.
	\]
\end{lemma}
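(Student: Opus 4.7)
The plan is to decompose the conditional expectation according to whether or not the $j$th depinking has occurred by time $t$. More precisely, I would write
\[
\mathbb{E}\!\left[1 - \tfrac{\mathrm{ink}_t^{\bf x}}{|V|-k+1}\,\Big|\,\mathrm{Fill}^{\bf x}\right]
\le \mathbb{E}\!\left[\left(1 - \tfrac{\mathrm{ink}_t^{\bf x}}{|V|-k+1}\right)\indic{D_j({\bf x})\le t}\,\Big|\,\mathrm{Fill}^{\bf x}\right] + \P[D_j({\bf x})>t\,|\,\mathrm{Fill}^{\bf x}],
\]
bounding $1-\mathrm{ink}/(|V|-k+1)\le 1$ inside the expectation on the right. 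This immediately peels off the second term in the statement of the lemma, so the remaining task is to bound the first term by $c_1\sqrt{|V|}\,e^{-c_2 j}$.

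For this I would exploit the martingale structure of $X_s := \mathrm{ink}_s^{\bf x}/(|V|-k+1)$. Since ink changes only at depinking times and the depinking step flips an independent fair coin, $(X_s)$ is a bounded $[0,1]$-valued martingale. The process must eventually absorb into a state with no pink particles and either all whites (no ink) or all reds (full ink), so $X_\infty=\indic{\mathrm{Fill}^{\bf x}}$ and hence $\P[\mathrm{Fill}^{\bf x}\mid\cF_s]=X_s$. Consequently,
\[
\mathbb{E}\!\left[(1-X_t)\indic{\mathrm{Fill}^{\bf x}}\right]=\mathbb{E}[(1-X_t)X_t],
\]
and since $x(1-x)$ is concave, $X_s(1-X_s)$ is a supermartingale; optional stopping at $D_j({\bf x})\wedge t$ then gives
\[
\mathbb{E}[(1-X_t)X_t\,\indic{D_j({\bf x})\le t}]\le \mathbb{E}[X_{D_j({\bf x})}(1-X_{D_j({\bf x})})].
\]
Dividing by $\P[\mathrm{Fill}^{\bf x}]=X_0=1/(|V|-k+1)$ reduces the problem to showing that the depinking-indexed quantity $\mathbb{E}[X_{D_j}(1-X_{D_j})]$ contracts geometrically in $j$, starting from $X_0(1-X_0)=O(1/|V|)$.

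The hard step -- and the one I would spend the most care on -- is this per-depinking contraction. Conditional on $\cF_{D_j-}$ the jump of $X$ is $\pm P_{D_j-}/(2(|V|-k+1))$ with equal probability, so by the variance identity
\[
\mathbb{E}[X_{D_j}(1-X_{D_j})\mid\cF_{D_j-}] = X_{D_j-}(1-X_{D_j-}) - \bigl(P_{D_j-}/(2(|V|-k+1))\bigr)^{2}.
\]
The pinkening cap $|P_s|\le\min(|R_s|,|W_s|)$ together with the depinking trigger $|P_{D_j-}|\ge\min(|R_{D_j-}|,|W_{D_j-}|)$ forces $P_{D_j-}$ to be comparable to $\min(|R_{D_j-}|,|W_{D_j-}|)$, which is exactly what is needed to obtain a multiplicative contraction factor uniformly bounded away from $1$. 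Crucially, this calculation depends only on the triple $(|R_s|,|P_s|,|W_s|)$ and on the fair coin flips at depinkings -- not on the hypergraph, the choice of $f$, or how pinkenings were generated -- so the contraction lemma proved in \citep{Oliveira2013a} transfers to our setting verbatim. The remaining $\sqrt{|V|}$ factor (rather than $|V|$) in the final bound comes from a Cauchy--Schwarz step used in \citep{Oliveira2013a} to pass from $\mathbb{E}[X_{D_j}(1-X_{D_j})]$ to the conditional expectation $\mathbb{E}[1-X_{D_j}\mid\mathrm{Fill}^{\bf x}]$, exploiting the initial imbalance $X_0=1/(|V|-k+1)$.
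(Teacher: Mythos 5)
Your framework---split on $\{D_j\le t\}$, use the martingale identity $\P[\mathrm{Fill}^{\bf x}\mid\cF_t]=X_t$ to convert the conditional expectation to $(|V|-k+1)\,\mathbb{E}[(1-X_t)X_t\indic{D_j\le t}]$, and then control the latter via a per-depinking contraction---is indeed the skeleton of the argument from Section~6 of \citep{Oliveira2013a} that the paper invokes without reproducing. However, the specific contraction you assert is false. Writing $N=|V|-k+1$ and supposing without loss of generality that $|R_{D_j-}|\le|W_{D_j-}|$, the depinking trigger together with the pinkening cap forces $|P_{D_j-}|=|R_{D_j-}|$ (up to an $O(1)$ overshoot from size-2 edges), so your variance identity yields a conditional drop in $X(1-X)$ of $\bigl(|P_{D_j-}|/(2N)\bigr)^2=\alpha^2/4$ with $\alpha:=|R_{D_j-}|/N$, while $X_{D_j-}(1-X_{D_j-})=(3\alpha/2)(1-3\alpha/2)$. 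The relative drop is $\alpha/(6-9\alpha)$, which tends to $0$ as $\alpha\to 0$; this is precisely the regime $\alpha\approx 1/N$ relevant at the start, so $\mathbb{E}[X_{D_j}(1-X_{D_j})]$ does \emph{not} contract geometrically at a rate independent of $|V|$, and your argument stalls.

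The quantity that actually contracts is $g(X_s):=\sqrt{X_s(1-X_s)}$. Each depinking multiplies the smaller of $X_{D_j-}$ and $1-X_{D_j-}$ by (essentially) $1\pm 1/3$ with equal probability, and because $g$ is strictly concave this gives $\mathbb{E}[g(X_{D_j})\mid\cF_{D_j-}]\le(1-\delta)\,g(X_{D_j-})$ for a universal $\delta>0$. Your optional-stopping step applies verbatim to the supermartingale $g(X_s)$ (again a concave function of the martingale $X_s$), and since $X(1-X)\le g(X)$ on $[0,1]$ one obtains $\mathbb{E}[(1-X_t)X_t\indic{D_j\le t}]\le\mathbb{E}[g(X_{D_j})]\le(1-\delta)^j g(X_0)\le(1-\delta)^j/\sqrt{N}$. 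Multiplying by $1/\P[\mathrm{Fill}^{\bf x}]=N$ gives $\sqrt{N}(1-\delta)^j$, i.e.\ the $c_1\sqrt{|V|}e^{-c_2 j}$ of the lemma. In particular the $\sqrt{|V|}$ arises directly from the mismatch $g(X_0)\approx 1/\sqrt{N}$ versus $\P[\mathrm{Fill}^{\bf x}]=1/N$; it is not produced by a Cauchy--Schwarz step as you surmised. Everything else in your write-up (the decomposition, $X_\infty=\indic{\mathrm{Fill}^{\bf x}}$, the use of $\P[\mathrm{Fill}^{\bf x}\mid\cF_t]=X_t$, and the optional-stopping reduction) is sound.
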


We therefore see that we need good control on the probability that
there have only been a few depinkings by time $t$. Here we cannot
simply rely on results from \citep{Oliveira2013a}, since our chameleon
process constructed in Section~\ref{S:cham} clearly obeys very different
dynamics. We shall need the following fundamental result -- a lower
bound on the number of red particles that are lost (due to pinkening)
during a colour-changing phase of the chameleon process (where we start the phase with more white particles than red). The proof is
deferred to Section~\ref{S:num_pinkenings}.

\begin{lemma}\label{L:lossofred}
	Suppose $|V|\ge36$ and consider a chameleon process with initial configuration $({\bf
		z},R,P,W)$ satisfying $|P|<|R|\le |W|$. Then
	\[
	\E[|R_{2T-}|]\le (1-10^{-6})|R|.
	\]
\end{lemma}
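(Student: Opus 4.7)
The plan is to show that during the colour-changing phase $(T,2T]$, the expected number of red particles that become pink is at least $10^{-6}|R|$. Since the preceding constant-colour phase $[0,T]$ changes no colours, we have $|R_T|=|R|$, $|W_T|=|W|$, $|P_T|=|P|$, and in particular the triggering condition $|P_T|<\min\{|R_T|,|W_T|\}$ holds at the start of the colour-changing phase. Moreover, since $T = 20\,T_{\mathrm{EX}(4,f,G)}(1/4) \ge 2\,T_{\mathrm{EX}(4,f,G)}(1/4)$, Proposition~\ref{P:AldousFill} applied to the underlying interchange process shows that the joint location at time $T$ of any fixed 4-tuple of particles is within total-variation $O(1/|V|)$ of uniform on $(V)_4$, and a similar quasi-stationarity persists throughout $(T,2T]$.

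Next I would lower-bound, for each ringing event of $\Lambda$ in $(T,2T]$, the conditional probability that it produces at least one pinkening. For an edge of size~2, pinkening occurs whenever it joins a red and a white vertex and a non-identity permutation is drawn; by the near-uniform joint distribution of any red/white pair of particles and the hypothesis $|W|\ge|R|$, the unconditional probability of this event is $\Omega(|R||W|/|V|^2)$. For an edge of size~$\ge 3$, the construction of the set $L_t$ requires a 3:1 red/white split in one of the orbits $\rho_i^{H_j}(m_i)$ of a cycle in the decomposition of $\sigma$, or in one of the four-element transposition blocks of $\rho_0$; I would bound the probability of such a configuration from below by combining Assumption~\ref{assumpf}(i) (class-function property, which decouples the cycle lengths from the identity of the vertices belonging to each orbit) with Assumption~\ref{assumpf}(ii) (fixed-point bound, which prevents $\sigma$ from leaving too many particles stationary) and the mixing bound on 4-tuples.

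Summing these lower bounds over the $\Theta(|E|T)$ ringing events in $(T,2T]$, and using $T\ge T_{\mathrm{EX}(2,f,G)}(1/4)$, one obtains (in the absence of the truncation) an expected number of pinkenings of order $|R|$. I would then deal with the truncation $|L_t^\ast|\le\tfrac13(\min\{|R_{t-}|,|W_{t-}|\}-|P_{t-}|)$ by a dichotomy: if the constraint becomes binding at some $\tau<2T$, then at least $\lceil(|R|-|P|)/3\rceil$ reds have already been pinkened, and together with $|P|<|R|$ this suffices for the stated bound (up to adjusting the constant $10^{-6}$); otherwise the unconstrained lower bound applies directly. The condition $|V|\ge 36$ enters to ensure that $|W|/|V|$ is bounded below uniformly in the initial state (since $|R|+|W|+|P|=|V|-k+1$ with $|P|<|R|\le|W|$).

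The main obstacle is the case analysis for permutations acting on edges of size $\ge 4$: the set $L_t^{i,j}$ is defined only when the four vertices $\rho_i^{H_j}(m_i)$ contain a one-vs-three red/white split, and because the hypergraph exclusion process lacks the negative correlation property (Section~\ref{s:neg_cor}) this probability cannot be read off from two-particle mixing alone. This is precisely where the $1/5$ fixed-point bound of Assumption~\ref{assumpf}(ii) is used: without it, the sampled permutation could be close to the identity and produce only short non-trivial cycles whose orbits never contain enough distinct particles for a pinkening to trigger. The detailed accounting of how the class-function structure combines with 4-particle mixing to yield a uniform lower bound on the per-event pinkening probability is the bulk of the technical work, and is the reason the full proof is deferred to Section~\ref{S:num_pinkenings}.
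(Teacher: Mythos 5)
Your high-level plan correctly identifies the main ingredients the paper uses (working with the modified chameleon process that allows extra pinkenings, quasi-uniformity from 4-particle mixing via Proposition~\ref{P:AldousFill}, Assumption~\ref{assumpf}(i) and~(ii) to control the permutation structure, and the role of $|V|\ge 36$ in bounding $|W|/|V|$ below). But the central counting step is not carried out in the way the paper does it, and as written it has a genuine gap.

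The problem is the sentence ``summing these lower bounds over the $\Theta(|E|T)$ ringing events in $(T,2T]$ \dots\ one obtains an expected number of pinkenings of order $|R|$.'' Summing a per-event probability of order $|R||W|/|V|^2$ over $\Theta(|E|T)$ Poisson incident times bounds the expected number of \emph{event--pinkening incidences}, which can exceed $|R|$ by a large factor and does not give a lower bound on the number of \emph{distinct} red particles that are pinkened: the same red particle can appear in many ringing edges, yet once it is pinkened it is no longer red and contributes nothing further. You need a decomposition indexed by \emph{particles}, not by ringing events. The paper's proof does exactly this: for each vertex $a$ it defines $\phi_a$ to be the \emph{first} time after $T$ that $a$ is in a ringing edge, together with a random variable $F_a$ recording the relevant partner vertices under the permutation applied at $\phi_a$. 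Lemma~\ref{L:numberpinkenings} then lower-bounds the number of pinkenings by a sum over $b\in I(W)$ of indicators of events $\{F_a = b,\,\phi_a=\phi_b,\,a\in I(R)\}$ (and their size-$3$ and size-$4$ analogues with the coin $\theta_{t_a}=1$). Because each $b$ is looked at only at its first ringing time, each vertex contributes at most once and the counting is clean. The second key device you are missing is the independence used in the paper: the events $\{F_a=b,\phi_a=\phi_b\}$ depend only on the randomness after time $T$, and hence are independent of $I = I_{[0,T]}$, which determines the colour configuration at time $T$. This factorisation is what allows the per-vertex probability to be written as a product of a permutation-structure term (bounded by Lemma~\ref{L:probF_a}, $\P[F_a\ne\ast\,|\,\phi_a\ne\infty]\ge 4/15$, via Assumption~\ref{assumpf}(ii)) and a colour term (bounded by Proposition~\ref{P:red_white_mix}, via $4$-particle mixing). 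Your sketch gestures at both ingredients but does not supply the particle-indexed decomposition or the independence argument, and without them the ``sum over events'' plan does not close.

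Two smaller remarks. First, $T$ is defined as $20\,T_{\mathrm{EX}(4,f,G)}(1/4)$ and the proof uses this both so that $4$-tuples are near-uniform at time $T$ (Proposition~\ref{P:red_white_mix}) and so that $\P[\phi_a=\infty]\le 1/|V|+2^{-20}$; your mention of $T\ge T_{\mathrm{EX}(2,f,G)}(1/4)$ is not the comparison that is actually used. Second, your dichotomy for the truncation (``if the constraint binds then $\lceil(|R|-|P|)/3\rceil$ reds are already pink'') is essentially the same informal observation the paper makes when introducing the modified chameleon process, so it is acceptable at this level, but note that $\lceil(|R|-|P|)/3\rceil$ can be as small as $1$ when $|P|$ is close to $|R|$, so on its own it does not yield $10^{-6}|R|$; the paper simply places this remark at the start of Section~\ref{S:num_pinkenings} and then proves the bound for the modified process, which is the route you should follow.
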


This result allows us to bound the probability appearing in the
statement of Lemma~\ref{L:inktodepink}.

\begin{lemma}\label{L:depinkbound}
	There exists a universal constant $\kappa_1>0$ such that for every
	interchange process on a regular hypergraph $G=(V,E)$, every $j\in\mathbb{N}$ and ${\bf
		x}\in (V)_k$, if $|V|\ge 36$ then
	\[
	\P\bra{D_j({\bf x})> t\,|\,\mathrm{Fill}^{\bf x}}\le
	\exp\left\{j-\frac{t}{\kappa_1\,T_{\mathrm{EX}(4,f,G)}(1/4)}\right\}
	.\]
\end{lemma}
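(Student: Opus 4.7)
The plan is to iterate the single-cycle contraction of Lemma~\ref{L:lossofred} to produce an exponential tail for the first depinking time $D_1$, and then to chain these bounds together via the strong Markov property to handle $D_j$.

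The first step is to bound $D_1({\bf x})$ starting from the natural initial configuration $({\bf z},\{x\},\varnothing,V\setminus({\bf O}({\bf z})\cup\{x\}))$, so that $|R_0|=1\le|W_0|$ and $|P_0|=0$. I would observe that so long as no depinking has occurred by time $2iT$, the state at every intermediate check time $2jT$ (for $j\le i$) also satisfies $|P|<|R|\le|W|$: by construction pink particles are created from red--white pairs, so the difference $|W|-|R|$ is invariant across a cycle, and the failure to depink at $2jT$ forces $|P_{2jT-}|<\min(|R|,|W|)=|R|$. Thus Lemma~\ref{L:lossofred} applies at each of these times, and iterating via the tower property yields
\[
\E\bigl[|R_{2iT-}|\,\mathbf 1_{\{D_1>2iT\}}\bigr]\le (1-10^{-6})^i\,|R_0|.
\]
Since $|R_{2iT-}|\ge 1$ on the event $\{D_1>2iT\}$ (otherwise the depinking criterion holds trivially), Markov's inequality yields $\P[D_1>2iT]\le(1-10^{-6})^i$. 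Interpolating in continuous time,
\[
\P[D_1({\bf x})>t]\le \exp(-c_3 t/T)
\]
for an absolute constant $c_3>0$.

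For $j\ge 2$ I would invoke the strong Markov property at $D_{k-1}$: after any depinking the pink coordinate is empty, so the chameleon restarts in a state satisfying the hypotheses of Lemma~\ref{L:lossofred} (albeit with new red/white counts). Repeating the previous argument produces
\[
\P\bigl[D_k({\bf x})-D_{k-1}({\bf x})>s\,\big|\,\mathcal F_{D_{k-1}}\bigr]\le m_{D_{k-1}}\exp(-c_3 s/T),
\]
where $m_{D_{k-1}}=\min(|R_{D_{k-1}}|,|W_{D_{k-1}}|)\le|V|/2$. Taking exponential moments at a rate $\lambda<c_3/T$ and using tower-iteration on $D_j=\sum_{k=1}^j(D_k-D_{k-1})$, I would obtain a Chernoff-style bound of the form $\P[D_j>t]\le|V|^{O(1)}\exp(Mj-\lambda t)$ for an absolute constant $M$.

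Finally, the conditioning on $\mathrm{Fill}^{\bf x}$ is dealt with using Proposition~\ref{P:inkfacts} (equivalently, part~\ref{item:fill_independent} of Lemma~\ref{L:existence}): summing the identity there over ${\bf c}\in(V)_{k-1}$ gives $\P[\mathrm{Fill}^{\bf x}]=1/(|V|-k+1)$, so conditioning on $\mathrm{Fill}^{\bf x}$ multiplies probabilities by at most $|V|$. Combining,
\[
\P[D_j({\bf x})>t\,|\,\mathrm{Fill}^{\bf x}]\le |V|^{O(1)}\exp(Mj-\lambda t).
\]
It then suffices to take $\kappa_1$ large enough that $1/(\kappa_1 T_{\mathrm{EX}(4,f,G)}(1/4))<\lambda$ and to observe that the claimed bound $\exp\{j-t/(\kappa_1 T_{\mathrm{EX}(4,f,G)}(1/4))\}$ is vacuous unless $t$ is much larger than $j$, at which point the exponential decay dominates the polynomial factor in $|V|$. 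The main obstacle is exactly this final absorption of the polynomial $|V|$ factor (which enters both through the crude bound $m_{D_{k-1}}\le|V|$ in the inter-depinking tails and through the conditioning on $\mathrm{Fill}^{\bf x}$); carrying it out cleanly is the technical content of the proof, and is where the flexibility of the constant $\kappa_1$ is exploited.
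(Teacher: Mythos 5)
Your iteration of Lemma~\ref{L:lossofred} to obtain the unconditional bound $\P\bra{D_1({\bf x})>2iT}\le(1-10^{-6})^i$ is correct, and indeed this is exactly the role Lemma~\ref{L:lossofred} is designed to play. However, the final step of your argument --- paying a polynomial factor in $|V|$ (from conditioning on $\mathrm{Fill}^{\bf x}$ and from the $m_{D_{k-1}}$ prefactors) and then ``absorbing'' it by choosing $\kappa_1$ large --- does not work, and this is a genuine gap rather than a routine technicality.

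Already for $j=1$: dividing by $\P\bra{\mathrm{Fill}^{\bf x}}=1/(|V|-k+1)$ gives $\P\bra{D_1>t\,|\,\mathrm{Fill}^{\bf x}}\le(|V|-k+1)(1-10^{-6})^{\lfloor t/2T\rfloor}$. The conclusion with $j=1$ would require this to be at most $\exp\{1-t/(\kappa_1 T_{\mathrm{EX}(4,f,G)}(1/4))\}$ for \emph{all} $t$. Put $t$ equal to a bounded multiple of $\kappa_1 T_{\mathrm{EX}(4,f,G)}(1/4)$, say $t=2\kappa_1 T_{\mathrm{EX}(4,f,G)}(1/4)$: the right-hand side equals the absolute constant $e^{-1}$, while the left-hand side is $(|V|-k+1)(1-10^{-6})^{\lfloor t/2T\rfloor}$, which for any fixed universal $\kappa_1$ tends to $\infty$ as $|V|\to\infty$. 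So no universal $\kappa_1$ can make the absorption go through. Moreover, your claimed intermediate bound $|V|^{O(1)}\exp(Mj-\lambda t)$ is itself optimistic: passing from $\P\bra{D_k-D_{k-1}>s\,|\,\mathcal F_{D_{k-1}}}\le m_{D_{k-1}}\exp(-c_3 s/T)$ to exponential moments contributes a factor of order $m_{D_{k-1}}$ at each of the $j$ stages, and you have no control on $\prod_{k=1}^j m_{D_{k-1}}$ beyond the crude bound $(|V|/2)^j=\exp(O(j\log|V|))$, which is $|V|^{O(j)}$, not $|V|^{O(1)}$.

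The paper sidesteps all of this by emulating Lemmas~6.2 and~9.2 of Oliveira to obtain the exponential-moment inequality $\E[e^{D_j({\bf x})/\kappa T}\,|\,\mathrm{Fill}^{\bf x}]\le e^j$ for an absolute constant $\kappa$, then applies Markov's inequality. The conditioning on $\mathrm{Fill}^{\bf x}$ must be handled \emph{together} with the evolution of the red/white counts, not peeled off crudely: since $\mathrm{ink}_t$ is a bounded martingale with $\mathrm{ink}_0=1$ and $\mathrm{Fill}^{\bf x}=\{\mathrm{ink}_\infty=|V|-k+1\}$, conditioning on $\mathrm{Fill}^{\bf x}$ is a Doob $h$-transform that weights each $\mathcal F_t$-measurable event by $\mathrm{ink}_t$ rather than by the uniform factor $1/\P\bra{\mathrm{Fill}^{\bf x}}$. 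Under this state-dependent tilting the potentially large prefactors $m_{D_{k-1}}$ are compensated, which is why the final bound carries no residual polynomial in $|V|$. Treating the two sources of $|V|$-loss separately and hoping to recoup them via $\kappa_1$ discards precisely this cancellation; that is the content missing from your proposal.
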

\begin{proof}
	Thanks to Lemma~\ref{L:lossofred}, the proofs of Lemmas 6.2 and 9.2
	of \citep{Oliveira2013a} can be emulated to show that there exists a
	positive constant $\kappa$ such that $\E[e^{D_j({\bf x})/\kappa T}\,|\,\mathrm{Fill}^{\bf x}]\le e^j$
	for all $j\in \mathbb N$. 
	Thus by Markov's inequality,
	\begin{align*}
	\P\bra{D_j({\bf x})>t\,|\,\mathrm{Fill}^{\bf x}}&=\P\bra{e^{D_j({\bf
				x})/\kappa T}>e^{t/\kappa T}\,|\,\mathrm{Fill}^{\bf x}}\\&\le
	e^{-t/\kappa T}\E[e^{D_j({\bf x})/\kappa T}\,|\,\mathrm{Fill}^{\bf x}]
	\le e^{j-t/\kappa T}\,.\end{align*} Writing $\kappa_1 = 20\kappa$ completes
	the proof.
\end{proof}
Combining Lemmas \ref{L:inktodepink} and \ref{L:depinkbound} completes the proof of part~\ref{item:exp_ink} of Lemma~\ref{L:existence}.

\medskip
It therefore only remains to show that the chameleon process also satisfies part~\ref{item:ink} of Lemma~\ref{L:existence}.

Let $\{\bar\tau_n\}_{n\in\mathbb N}$ denote the update times of the chameleon process $\{M_t\}_{t\ge0}$; thus each $\bar\tau_n$ is either an incident time of the Poisson process $\Lambda$ from Section~\ref{S:chameleon_construction}, or a depinking time (of the form $2iT$ with $i\in\mathbb N$, as in Box~\ref{box:cham_process}).  For each $j\in\mathbb{N}$, consider a process $\{M_t^j\}_{t\ge0}$
which is identical to $\{M_t\}_{t\ge0}$ for all $t<\bar\tau_j$ but
evolves as the interchange process (i.e. with no further
recolourings) for all $t\ge\bar\tau_j$. More formally, for all $t\ge
\bar\tau_j$,
\[
M_t^j=(I_{(\bar\tau_j,t]}({\bf
	z}_{\bar\tau_j}),I_{(\bar\tau_j,t]}(R_{\bar\tau_j}),I_{(\bar\tau_j,t]}(P_{\bar\tau_j}),I_{(\bar\tau_j,t]}(W_{\bar\tau_j})),
\]
where $I$ is the map used in the modified graphical construction of the interchange process $\{{\bf
	x}_t^\mathrm{IP}\}$ (see Section~\ref{S:chameleon_construction}).

Notice that the almost-sure limit of $\{M_t^j\}_{t\ge0}$ as
$j\to\infty$ is the chameleon process $\{M_t\}_{t\ge0}$. As a
result, by the dominated convergence theorem, it suffices to prove
that for each $j\in\mathbb{N}$ and $b\in V$,\[
\P\bra{x_t^\mathrm{IP}=b\,|\,{\bf
		z}_t^\mathrm{IP}}=\E[\ink_t^j(b)\,|\,{\bf z}_t^\mathrm{IP}],
\]
where $\ink_t^j(b)$ is the amount of ink at vertex
$b$ in the process $M^j_t$.  We prove this by induction on $j$. The case $j=1$ is trivial
since the particle initially at $x$ is the only red particle (and
there are no pink particles).  For the inductive step we wish to
show that almost surely
\[
\E[\ink_t^j(b)\,|\,{\bf z}_t^\mathrm{IP}]=\E[\ink_t^{j+1}(b)\,|\,{\bf
	z}_t^\mathrm{IP}].
\]
For $t<\bar\tau_j$, these are equal since the two processes evolve
identically for such times. The update at time $\bar\tau_j$ of
process $\{M_t^{j+1}\}$ is a chameleon step and could be of two
types: also an update of the interchange process (i.e. $\bar\tau_j$
is an incident time of the Poisson process $\Lambda$), or not (i.e. it is a depinking time). Suppose we are in the first case. We condition
on the common state of $M^j$ and $M^{j+1}$ at time
$\bar\tau_{j-1}$. We want to show that almost surely 
\begin{align}\label{eq:expink}
\E\left[\E[\ink_{\bar\tau_j}^j(b)\,|\,{\bf
	z}_{\bar\tau_j}^\mathrm{IP},M^j_{\bar\tau_{j-1}}]\right]=\E\left[\E[\ink_{\bar\tau_j}^{j+1}(b)\,|\,{\bf
	z}_{\bar\tau_j}^\mathrm{IP},M^{j+1}_{\bar\tau_{j-1}}]\right].
\end{align}
By the strong Markov property at time $\bar\tau_{j-1}$ we
can construct a chameleon process $\{\tilde M_t^j\}$ (with
associated interchange process $\tilde{\bf x}^\mathrm{IP}$) which is
identical to $\{M_t\}$ for all $t<\bar\tau_j$, but for all $t\ge\bar\tau_j$ evolves as an
interchange process (i.e. with no further recolourings) and uses
permutation choices:
\begin{itemize}
	\item $\sigma_n$ if $t=\tau_n$ is in a constant-colour phase,
	\item $\tilde\rhofn_{A(\sigma_n)}(\sigma_n)$ if $t=\tau_n$ is in a colour-changing phase.
\end{itemize} 
We claim that
\[
\frac1{2}\E[\ink_{\bar\tau_j}^j(b)\,|\,{\bf
	z}_{\bar\tau_j}^\mathrm{IP},M^j_{\bar\tau_{j-1}}] +\frac1{2}\E[\widetilde{\ink}_{\bar\tau_j}^j(b)\,|\,\tilde{\bf
	z}_{\bar\tau_j}^\mathrm{IP},\tilde M^j_{\bar\tau_{j-1}}]=\E[\ink_{\bar\tau_j}^{j+1}(b)|\,\,{\bf
	z}_{\bar\tau_j}^\mathrm{IP},M^{j+1}_{\bar\tau_{j-1}}]\,,
\] for all $b\in V$, almost surely (where $\widetilde{\ink}$ is the
ink process under $\tilde M^j$). If $\bar\tau_j$ is in a constant-colour phase, then the statement is immediate (since all three processes update in exactly the same way). If $\bar\tau_j$ is in a colour-changing phase and the particle which is at $b$ at time $\bar\tau_j$ has just been pinkened in the chameleon process then $\ink_{\bar\tau_j}^{j+1}(b)=1/2$ and by Lemma~\ref{L:connection}, $\{\ink_{\bar\tau_j}^{j}(b),\widetilde{\ink}_{\bar\tau_j}^j(b)\}=\{0,1\}$, and so the statement is true. Finally, if $\bar\tau_j$ is in a colour-changing phase but the particle at $b$ at time $\bar\tau_j$ has not just been pinkened, then the three expectations are all equal since $\sigma_j$ and $\tilde\rhofn_{A(\sigma_j)}(\sigma_j)$ have the same distribution, by Corollary~\ref{C:algorithm_random_input} and Lemma~\ref{L:Ahasproperty} (and black particles move identically under each by Corollary~\ref{C:tilde_rho_A_props}). We thus have
\begin{align*}
\E\left[\E[\ink_{\bar\tau_j}^j(b)\,|\,{\bf
	z}_{\bar\tau_j}^\mathrm{IP},M^j_{\bar\tau_{j-1}}]\right]&=\frac1{2}
\E\left[\E[\ink_{\bar\tau_j}^j(b)\,|\,{\bf
	z}_{\bar\tau_j}^\mathrm{IP},M^j_{\bar\tau_{j-1}}]\right]+\frac1{2}
\E\left[\E[\widetilde{\ink}_{\bar\tau_j}^j(b)\,|\,\tilde{\bf z}_{\bar\tau_j}^\mathrm{IP},\tilde M^j_{\bar\tau_{j-1}}]\right]\\
&=\E\left[\E[\ink_{\bar\tau_j}^{j+1}(b)\,|\,{\bf
	z}_{\bar\tau_j}^\mathrm{IP},M^{j+1}_{\bar\tau_{j-1}}]\right].
\end{align*}
We are left to deal with the second case, when $\bar\tau_j$ is not
an update of the interchange process. In this case there must be a
depinking at time $\bar\tau_j$. We wish to show \eqref{eq:expink}
holds, so again use the strong Markov property at time
$\bar\tau_{j-1}$ to obtain
\begin{align*}  \E\left[\E[\ink_{\bar\tau_j}^{j+1}(b)\,|\,{\bf
	z}_{\bar\tau_j}^\mathrm{IP},M^{j+1}_{\bar\tau_{j-1}}]\right]&=\E\left[\E[\ink_{\bar\tau_j}^{j+1}(b)\,|\,{\bf
	z}_{\bar\tau_{j-1}}^\mathrm{IP},M^{j+1}_{\bar\tau_{j-1}}]\right]\\&=\E\left[\E[\ink_{\bar\tau_j}^{j}(b)\,|\,{\bf
	z}_{\bar\tau_{j-1}}^\mathrm{IP},M^{j+1}_{\bar\tau_{j-1}}]\right],
\end{align*}where the second equality follows from
the fact that an independent Bernoulli$(1/2)$ random variable is used to determine the outcome of a depinking. This completes the induction, and with it the proof of part~\ref{item:ink} of Lemma~\ref{L:existence}.

\subsection{Bounding the rate of pinkening}\label{S:num_pinkenings}
In order to prove Lemma~\ref{L:lossofred}, we need to show that during a colour-changing phase
(started with more white particles than red particles) the number of
pink particles we create is in expectation at least a constant times the
number of red particles at the start of that phase. We prove this
result in this section.

Suppose we wish to lower-bound the number of white particles that are pinkened (which we shall refer to as the \emph{number of pinkenings}) during the
first colour-changing phase $[T,2T]$. Since we start with 1 red
particle, there will be more white particles at time $T$ than red. We
will wish to apply the following analysis for a general
colour-changing phase (and not just the first) but the calculations
will carry through since we are assuming the number of white particles is at least the
number of red.

We make a change to the chameleon process in this section in
order to ease our analysis -- we remove the condition that we only
pinken if we have fewer pink particles than either red or white
particles and replace the set $\bar L_t$ in the formal description (Box~\ref{box:cham_process}) with the potentially larger set $L_t$. Although this means we can end up with more pinkening
events, this will only happen if a certain number of pinkening events
have already happened (since pink particles are only created at times of pinkening events), and in that case we will be happy regardless.
We shall
refer to this new process as the \emph{modified chameleon process}.

Let $a\in V$. We define $t_a$ to be the smallest integer $n$ such that
$T<\tau_n\le 2T$ and $a\in e_n$.  If no such $n$ exists we set
$t_a=\infty$. Also, we set $\phi_a=\tau_{t_a}$ with notation
$\tau_{\infty}=\infty$; hence $\phi_a$ is the first time (after time
$T$) that vertex $a$ is in a ringing edge of the underlying
Poisson process. We define a third variable, $F_a$, set to be
equal to $\ast$ in the case $\phi_a=\infty$. If, on the other hand,
$\phi_a<\infty$, there are five possible cases. Let $c_{t_a}$ denote
the cycle of $\sigma_{t_a}$ containing vertex $a$ and $|c_{t_a}|$ denote the number of elements in $c_{t_a}$. For ease of notation we write $d'$ for $\lfloor\frac{|c_{t_a}|}{4}\rfloor$ and $m$ for the smallest element in $c_{t_a}$.
\begin{enumerate}
	\item\label{F_a:2edge} If $|c_{t_a}|=2$ and $|e_{t_a}|=2$, then set $F_a=I^{-1}_{[T,\phi_a)}(c_{t_a}(a))$.
	\item\label{F_a:2cycle_larger_edge} If $|c_{t_a}|=2$ and $|e_{t_a}|\ge3$, then denote by
	\[
	(a_1\,\,a_2)\prec\cdots\prec(a_{l-1}\,\,a_l)
	\]
	the ordered transpositions in the canonical cyclic decomposition of $\rho_{t_a}$. If there exists $j\in\{1,\ldots,\lfloor l/4\rfloor\}$ with $a\in\{a_{4j-3},a_{4j-2},a_{4j-1},a_{4j}\}$ and $a_{4j-1}<a_{4j-2}$, then:
	\begin{enumerate}[(a).]
		\item if $a=a_{4j-3}$ set $F_a=I^{-1}_{[T,\phi_a)}(a_{4j-1},a_{4j-2},a_{4j})$;
		\item if $a=a_{4j-2}$ set $F_a=I^{-1}_{[T,\phi_a)}(a_{4j},a_{4j-3},a_{4j-1})$;
		\item if $a=a_{4j-1}$ set $F_a=I^{-1}_{[T,\phi_a)}(a_{4j-3},a_{4j-2},a_{4j})$;
		\item if $a=a_{4j}$ set $F_a=I^{-1}_{[T,\phi_a)}(a_{4j-2},a_{4j-3},a_{4j-1})$.
	\end{enumerate}
	\item\label{F_a:3cycle} If $|c_{t_a}|=3$, then set $F_a=I^{-1}_{[T,\phi_a)}(c_{t_a}(a),c_{t_a}^2(a))$.
	\item\label{F_a:large_cycle}  If $|c_{t_a}|\ge 4$, and there exists $j\in
	\{1,\ldots,d'\}$ with $a\in c_{t_a}^{H_j}(m)$, then:
	\begin{enumerate}[(i).]
		\item if $a=c_{t_a}^{2j-2}(m)$, 
		set \[F_a=I^{-1}_{[T,\phi_a)}\big(c_{t_a}^{2d'+2j-2}(m),c_{t_a}^{2j-1}(m),c_{t_a}^{2d'+2j-1}(m)\big);\]
		\item if $a=c_{t_a}^{2d'+2j-2}(m)$, set
		\[
		F_a=I^{-1}_{[T,\phi_a)}\big(c_{t_a}^{2j-2}(m),c_{t_a}^{2j-1}(m),c_{t_a}^{2d'+2j-1}(m)\big)
		;\]
		\item if $a=c_{t_a}^{2j-1}(m)$, set
		\[
		F_a=I^{-1}_{[T,\phi_a)}\big(c_{t_a}^{2d'+2j-1}(m),c_{t_a}^{2j-2}(m),c_{t_a}^{2d'+2j-2}(m)\big)
		;\]
		\item if $a=c_{t_a}^{2d'+2j-1}(m)$, set
		\[
		F_a=I^{-1}_{[T,\phi_a)}\big(c_{t_a}^{2j-1}(m),c_{t_a}^{2j-2}(m),c_{t_a}^{2d'+2j-2}(m)\big)
		.\]
	\end{enumerate}
	\item\label{F_a:ast} In all other cases, set $F_a = \ast$.
\end{enumerate}

\begin{rmk}\label{R:Fequalities}
	From this construction it is easy to see that (for any $b,c,d\in V$)
	\begin{enumerate}
		\item In case~\ref{F_a:2edge} above, we have \[\{F_a=b,\,\phi_a=\phi_b\}=\{F_b=a,\,\phi_a=\phi_b\}.\]
		\item In case~\ref{F_a:3cycle} above, we have
		\[
		\{F_a=(b,c),\,\phi_a=\phi_b=\phi_c\}=\{F_b=(c,a),\,\phi_a=\phi_b=\phi_c\}=\{F_c=(a,b),\,\phi_a=\phi_b=\phi_c\}.
		\]
		\item In all other cases, we have 
		\begin{align*}
		\{F_a=(b,c,d),\,\phi_a=\phi_b=\phi_c=\phi_d\}&=\{F_b=(a,c,d),\,\phi_a=\phi_b=\phi_c=\phi_d\}\\=\{F_c=(d,a,b),\,\phi_a=\phi_b=\phi_c=\phi_d\}&=\{F_d=(c,a,b),\,\phi_a=\phi_b=\phi_c=\phi_d\}.
		\end{align*}
	\end{enumerate}
	
\end{rmk}

A possible evolution of the chameleon process during the first two phases is shown in Figure~\ref{f:graphical}.

\begin{figure}[!ht]
\centering
\includegraphics[scale=1.5]{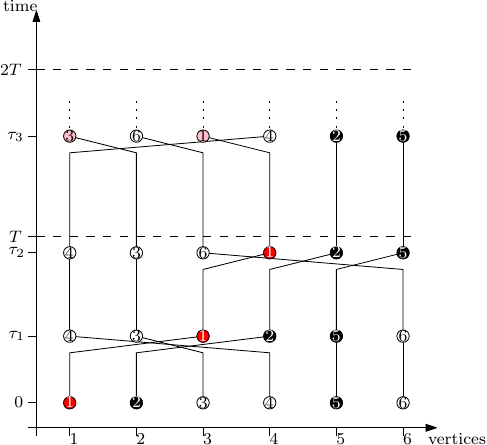}\caption{In this example we suppose that $V=\{1,2,3,4,5,6\}$, $E=\{\{1,2,3,4\},\{1,2,5,6\},\{3,4,5,6\}\}$, and permutations chosen are uniform 4-cycles. Particles are labelled according to their initial location. We see that $I_T(R)=I_T(\{1\})=\{4\}$, $t_4=3$ (the first time vertex 4 is in a ringing edge after time $T$ is $\tau_3$), $\tau_3=\phi_1=\phi_2=\phi_3=\phi_4$ (the first time vertices 1, 2, 3 and 4 are in a ringing edge after time $T$ is $\tau_3$), and $F_4=(2,1,3)$. Particles at vertices 2 and 4 (at time $T$) are coloured pink at time $\tau_3$. }\label{f:graphical}
\end{figure}

We now present a method to count the
number of pinkenings during a colour-changing phase of the modified
chameleon process. For ease of notation we shall write $I$ for the map
$I_{[0,T]}$. The proofs of the first three results below are fairly simple extensions of equivalent results in \citet{Oliveira2013a} and can be found in 
\opt{noarxivVersion} 
{
	\citet{Connor-Pymar-2016}.
}
\opt{arxivVersion} 
{ 
	\hspace{-2.5mm}Appendix~\ref{Appendix2}.
}

\begin{lemma}\label{L:numberpinkenings}
	Consider a modified chameleon process with starting configuration
	$({\bf z},R,P,W)$ satisfying $|P|<|R|\le|W|$. Then the number of pinkenings during $(T,2T)$ is
	at least the number of $b\in I(W)$ such that one of the following holds:
	\begin{itemize}
		\item $F_a=b$ for some $a\in I(R)$ with $\phi_a=\phi_b$,
		\item $F_a=(b,c)$ for some $a\in I(R)$ and $c\in I(W)$ with $\phi_a=\phi_b=\phi_c$, and $\theta_{t_a}=1$,
		\item  $F_a=(b,c,d)$ for some $a\in I(R)$ and $c,d\in I(W)$ with
		$\phi_a=\phi_b=\phi_c=\phi_d$,  and $\theta_{t_a}=1$.
	\end{itemize}
	
\end{lemma}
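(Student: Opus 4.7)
The plan is to exhibit, for each $b \in I(W)$ satisfying one of the three conditions, a distinct pinkening event inside $(T, 2T)$ at which the particle residing at vertex $b$ at time $\phi_b{-}$ becomes pink. Since different $b$'s refer to distinct vertices (and hence distinct particles being recoloured), the associated pinkening events are automatically distinct, so the count of such $b$'s is a lower bound for the total number of pinkenings. The whole argument proceeds by translating the (somewhat baroque) definition of $F_a$ into the update rules of Box~\ref{box:cham_process}, using Lemma~\ref{L:connection} and the structure of $L_t^{i,j}$ developed in Section~\ref{S:chameleon_construction}.

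\textbf{Key observation.} The crucial input is that if $\phi_v$ is the first time after $T$ at which vertex $v$ lies in a ringing edge, then during $(T, \phi_v)$ the particle originally at $v$ neither moves nor is recoloured; any recolouring at $v$ would require $v$ to belong to a ringing edge, and no such event has yet occurred. Hence whenever the hypothesized equalities $\phi_a = \phi_b$, or $\phi_a = \phi_b = \phi_c$, or $\phi_a = \phi_b = \phi_c = \phi_d$ hold, at time $\phi_a{-}$ the vertices $a, b, c, d$ still contain the particles present at time $T$, with their colours red, white, white, white respectively. In particular, this forces $b = c_{t_a}(a)$ in the first bullet, and forces $\{b, c\}$ (resp.\ $\{b, c, d\}$) to lie in $e_{t_a}$ in the second (resp.\ third) bullet, so that all the relevant white vertices literally sit inside the ringing edge $e_{t_a}$.

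\textbf{Case analysis.} I now check each bullet against Box~\ref{box:cham_process}. \emph{First bullet:} combining $\phi_a = \phi_b$ with $F_a = b$ yields $|e_{t_a}| = |c_{t_a}| = 2$, $e_{t_a} = \{a, b\}$ with $a$ red, $b$ white, and $\sigma_{t_a} \neq \mathrm{id}$; the second sub-case of the colour-changing update fires and pinkens the particle at $b$ (no $\theta$-condition needed here). \emph{Second bullet:} case~\ref{F_a:3cycle} places $\{a, b, c\}$ as the orbit of the 3-cycle $c_{t_a}$, with a 1-red/2-white split, so the definition \eqref{E:A_i_defn2} gives $A_i(R_{t-}, W_{t-}, c_{t_a}) = \{0\}$; the formula for $L_t^{i,0}$ in the $d_i=3$ part of the construction then picks out the pair $\{r, \rho_i(r)\} = \{a, b\}$ (which is why we read off $b = c_{t_a}(a)$ from $F_a$), and with $\theta_{t_a} = 1$ and $|e_{t_a}| \ge 3$ the pinkening fires. \emph{Third bullet:} split into case~\ref{F_a:2cycle_larger_edge} (a 2-cycle sitting inside a larger edge, handled via $\rho_0$) and case~\ref{F_a:large_cycle} (a cycle of size $\ge 4$). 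In each of the four sub-cases (a)--(d) or (i)--(iv) for the position of $a$ inside the relevant 4-tuple, one checks directly that $\{\rho^{2j-2}(m), \rho^{2d'+2j-2}(m)\}$ contains exactly one red precisely when that pair equals $\{a, b\}$, and otherwise $\{\rho^{2j-1}(m), \rho^{2d'+2j-1}(m)\} = \{a, b\}$; in either case the $L_t^{i,j}$ constructed by the chameleon is $\{a, b\}$, and the condition $a_{4j-1} < a_{4j-2}$ appearing in case~\ref{F_a:2cycle_larger_edge} is exactly the one needed for $j$ to lie in the effective index set for $\rho_0$.

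\textbf{Main obstacle and remarks.} The principal challenge is the bookkeeping in this sub-case check: the four-way definition of $F_a$ has been engineered specifically so that the vertex $b$ it singles out coincides with the white element of the pair $L_t^{i,j}$ chosen by the chameleon. Here it is essential to work with the \emph{modified} chameleon process (where the cap $|P_{t-}| + 2|L_t^\ast| \le \min\{|R_t|, |W_t|\}$ is removed): without the cap, every admissible $L_t^{i,j}$ is executed, so the match between $F_a$ and the actual pinkening cannot be spoiled by some arbitrary truncation. The modification only adds pinkening events, so a lower bound for it suffices for the downstream application to Lemma~\ref{L:lossofred}. Finally, injectivity is automatic: distinct $b$ label distinct vertices whose particles get pinkened (possibly at a common time $\phi_b = \phi_{b'}$, but still as separate entries in the set $\bar L_t$), so no pinkening is double-counted.
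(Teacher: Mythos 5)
Your proof is correct and follows the same route as the paper's: the key observation that the particle at $v$ neither moves nor is recoloured during $(T,\phi_v)$ is exactly the identity $I_{[T,\phi_b)}(b)=b$ used in the paper to pin down $b$ as $c_{t_a}(a)$ (or the appropriate element of the $4$-tuple), after which one matches the three bullets to the $|e_n|=2$ sub-case and to the sets $L_t^{i,j}$ in the $d_i=3$, $\rho_0$, and $d_i\ge 4$ constructions respectively, just as in the paper's case-by-case verification. Your additional remarks on why the modified process (no cap on pinkenings) is the right object and on the injectivity of the map $b\mapsto$ pinkened particle are implicit in the paper's argument, so this is the same proof.
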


In bounding the expected number of pinkenings during a colour-changing
phase, it turns out to be useful to have a lower bound on the
probability that $F_a\neq\ast$ given $\phi_a\neq\infty$. This is
because even if vertex $a$ is in a ringing edge during time interval
$[T,2T]$, in order for the particle initially at $a$ to be pinkened in
the modified chameleon process at this time, it is necessary (but not
sufficient) that $F_a\neq\ast$. The proof of this lemma makes use of part \ref{assumpf-fixed} of Assumption~\ref{assumpf}.

\begin{lemma}\label{L:probF_a}For every $a\in V$, $\P\bra{F_a\neq\ast\,|\,\phi_a\neq\infty}\ge4/15$.
\end{lemma}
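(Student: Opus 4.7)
The plan is to condition on the edge $e_{t_a}=e$ containing $a$ (via $\phi_a\neq\infty$) and to exploit that $f_e$ is a class function on $\cS_e$ (Assumption~\ref{assumpf}(1)). Conjugation invariance then implies that $\P(F_v\neq\ast\,|\,e_{t_a}=e)$ is independent of $v\in e$, so setting $N(\sigma):=|\{v\in e\,:\,F_v\neq\ast\}|$ we have $\P(F_a\neq\ast\,|\,e_{t_a}=e)=\E[N(\sigma_{t_a})]/|e|$. It therefore suffices to show $\E[N(\sigma_{t_a})]\ge 4|e|/15$ for every edge $e\ni a$.

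I would then lower-bound $N(\sigma)$ cycle-by-cycle in the decomposition of $\sigma$. A 3-cycle contributes all 3 of its elements to $N$ via case~\ref{F_a:3cycle}. A cycle of length $d\ge 4$ contributes $4\lfloor d/4\rfloor\ge d/2$ elements via case~\ref{F_a:large_cycle}, since the indices $\bigcup_{j=1}^{d'}H_j=\{0,\ldots,4d'-1\}$ form exactly the "valid" positions within the cycle. When $|e|=2$ the transposition contributes both its elements (case~\ref{F_a:2edge}), and Assumption~\ref{assumpf}(2) then immediately yields $\E[N]/|e|\ge 4/5$. For $|e|\ge 3$ the most delicate contribution comes from $\rho_0$: each consecutive pair of transpositions indexed by $j\in\{1,\ldots,\lfloor l/4\rfloor\}$ contributes 4 elements precisely when the ordering condition $a_{4j-1}<a_{4j-2}$ holds. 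By the class-function property, conditional on the cycle type the pairing of the $l$ elements of $\rho_0$ is uniform, and a direct enumeration over the three pairings of any four sorted elements $x_1<x_2<x_3<x_4$ (only $\{x_1,x_2\},\{x_3,x_4\}$ fails the condition) shows that each such $j$ is "good" with probability $2/3$.

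The main obstacle is the lone-transposition regime $l=2$, where $\rho_0$ contributes nothing to $N$. Here I would use Assumption~\ref{assumpf}(2) quantitatively: a lone transposition on $|e|\ge 3$ vertices leaves $|e|-2\ge|e|/3$ elements fixed, so the mass $f_e$ can place on such configurations is constrained through the uniform inequality $\P(a\text{ fixed})\le 1/5$. Combining this constraint with the contributions from 3-cycles, larger cycles, and paired transpositions --- each weighted by its size --- and using that the expected number of non-fixed points is at least $4|e|/5$ (directly from Assumption~\ref{assumpf}(2)), a short but careful case analysis (conveniently organised by whether $\E[s]$ or $\E[l]$ dominates, where $s$ and $l$ denote the expected sizes of the $\ge 3$-cycle and transposition parts respectively) yields $\E[N(\sigma_{t_a})]\ge 4|e|/15$ and completes the proof.
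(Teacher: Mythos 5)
Your central claim --- that conjugation invariance makes $\P(F_v\neq\ast\,|\,e_{t_a}=e)$ independent of $v\in e$, so that $\P(F_a\neq\ast\,|\,e_{t_a}=e)=\E[N(\sigma)]/|e|$ --- is false, and this breaks the proof. The definition of $F_a$ uses the \emph{smallest} element $m$ of the cycle $c_{t_a}$ containing $a$; this is a label-dependent convention that is not conjugation-covariant. Concretely, take $e=\{1,\dots,5\}$ and $f_e$ uniform on $5$-cycles. Then $d'=1$ and $H_1=\{0,1,2,3\}$, so the only ``bad'' position is $r=4$. The smallest vertex $a=1$ is always $m$ and sits at position $0$, so $\P(F_1=\ast\,|\,e_{t_a}=e)=0$; whereas for $a=5$, its position relative to $m=1$ is uniform on $\{1,\dots,4\}$, so $\P(F_5=\ast\,|\,e_{t_a}=e)=1/4$. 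The two are not equal. More generally, the vertex that happens to be the minimum of its cycle always occupies a good position, while other vertices do not, so $\P(F_v\neq\ast)$ genuinely varies over $v\in e$.

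Because of this, the quantity you propose to bound, $\E[N(\sigma)]/|e|$, is only the \emph{average} of $\P(F_v\neq\ast\,|\,e_{t_a}=e)$ over $v\in e$, and a lower bound on the average does not give the per-vertex lower bound that the lemma asserts (``for every $a\in V$\dots''). Note also that the final step is left as an unexecuted ``short but careful case analysis''; but even granting it, you would only have proved an averaged statement. The paper instead bounds $\P(F_a=\ast\,|\,\phi_a\neq\infty)$ directly, for a fixed $a$, by partitioning the event $\{F_a=\ast\}$ into five cases $B_1,\dots,B_5$ according to the size of $c_{t_a}$ and the transposition structure, showing that conditional on $a$ not being a fixed point the bad event has probability at most $2/3$, and then invoking part 2 of Assumption~\ref{assumpf} to bound $\P(|c_{t_a}|=1)\le 1/5$, giving $\P(F_a=\ast\,|\,\phi_a\neq\infty)\le \tfrac15+\tfrac23\cdot\tfrac45=\tfrac{11}{15}$. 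Your cycle-by-cycle contribution counts (3-cycles contribute all $3$ elements, $d$-cycles contribute $4\lfloor d/4\rfloor$ positions, the $2/3$ probability for the $a_{4j-1}<a_{4j-2}$ ordering) are correct ingredients and mirror the numerators appearing in the paper's conditional bounds, but they need to be applied to control the conditional probability for a fixed vertex $a$, not assembled into an averaged count.
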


\begin{prop}\label{P:red_white_mix}
	Consider a modified chameleon process with initial configuration
	$({\bf z},R,P,W)$. Then for any vertices $a,b,c,d$, 
	\begin{enumerate}[(i).]
		\item $\P\bra{|\{a,b\}\cap I(R)|=1,|\{a,b\}\cap I(W)|=1} \ge
		(1-2^{-9})^2|R|\frac{|W|}{\binom{|V|}{2}}$,
		\item $\P\bra{|\{a,b,c\}\cap I(R)|=1,|\{a,b,c\}\cap I(W)|=2} \ge
		(1-2^{-9})^2|R|\frac{\binom{|W|}{2}}{\binom{|V|}{3}}$,
		\item $\P\bra{|\{a,b,c,d\}\cap I(R)|=1,|\{a,b,c,d\}\cap I(W)|=3} \ge
		(1-2^{-9})^2|R|\frac{\binom{|W|}{3}}{\binom{|V|}{4}}$.
	\end{enumerate}   
\end{prop}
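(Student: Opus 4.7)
The plan is to identify each of the three probabilities as a question about an exclusion process of the appropriate size evaluated at time $T=20T_{\mathrm{EX}(4,f,G)}(1/4)$, and then apply Proposition~\ref{P:AldousFill} to obtain a near-uniform lower bound. Set $k\in\{2,3,4\}$ according to the part being proved, and let $S$ be the corresponding $k$-subset of $\{a,b,c,d\}$. First I would rewrite the event $\{|S\cap I(R)|=1,\,|S\cap I(W)|=k-1\}$ as $\{|I^{-1}(S)\cap R|=1,\,|I^{-1}(S)\cap W|=k-1\}$, using the equivalence $v\in I(R)\iff I^{-1}(v)\in R$. By \eqref{eq:law_I_inverse}, $I^{-1}=I^{-1}_{[0,T]}$ has the same law as $I$, so Proposition~\ref{prop:graph} identifies $I^{-1}(S)$ in law with the state at time $T$ of an $\mathrm{EX}(k,f,G)$ process started from $S$.

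Next, because $\mathrm{EX}(k,f,G)$ is reversible with uniform stationary distribution on $\binom{V}{k}$, I would apply Proposition~\ref{P:AldousFill} with $\varepsilon=2^{-10}$ to obtain
\[
  \P\bra{I^{-1}(S)=S'} \ge \frac{(1-2^{-9})^2}{\binom{|V|}{k}} \qquad\text{for every } S'\in\binom{V}{k},
\]
provided that $T\ge 2T_{\mathrm{EX}(k,f,G)}(2^{-10})$. Summing this bound over the $|R|\binom{|W|}{k-1}$ subsets $S'$ with exactly one element from $R$ and the remaining $k-1$ from $W$ then yields the right-hand side in each of the three parts.

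The main obstacle is verifying the mixing-time hypothesis $T\ge 2T_{\mathrm{EX}(k,f,G)}(2^{-10})$. Proposition~\ref{P:1to1eps} gives $T_{\mathrm{EX}(k,f,G)}(2^{-10})\le 10T_{\mathrm{EX}(k,f,G)}(1/4)$, which together with $T=20T_{\mathrm{EX}(4,f,G)}(1/4)$ settles the case $k=4$ immediately. For $k\in\{2,3\}$ one additionally needs the monotonicity $T_{\mathrm{EX}(k,f,G)}(1/4)\le T_{\mathrm{EX}(4,f,G)}(1/4)$; intuitively this should hold because fewer exclusion particles impose fewer constraints on the underlying dynamics and so mix at least as fast, and I would aim to make this rigorous via a coupling built on the shared graphical construction (realising the $k$-particle exclusion as a projection of the $4$-particle interchange run on the same Poisson-permutation data, then reducing to the marginalisation bound $T_{\mathrm{IP}(j,f,G)}(\varepsilon)\le T_{\mathrm{IP}(4,f,G)}(\varepsilon)$). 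This last comparison is where the argument requires the most care, since $T_{\mathrm{IP}(4)}$ and $T_{\mathrm{EX}(4)}$ need not coincide in the hypergraph setting.
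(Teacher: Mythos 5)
Your proof of part (iii) is exactly the paper's: write $v\in I(R)\iff I^{-1}(v)\in R$, use \eqref{eq:law_I_inverse} to replace $I^{-1}$ by $I$, identify $I(\{a,b,c,d\})$ at time $T$ with $A_T^{\mathrm{EX}}$ for $\mathrm{EX}(4,f,G)$ started at $\{a,b,c,d\}$, and then apply Propositions~\ref{P:1to1eps} and~\ref{P:AldousFill} (with $\eps=2^{-10}$, using $T=20\,T_{\mathrm{EX}(4,f,G)}(1/4)\ge 2\,T_{\mathrm{EX}(4,f,G)}(2^{-10})$) and sum over the $|R|\binom{|W|}{3}$ admissible target sets.

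You have, however, put your finger on a genuine soft spot in parts (i) and (ii), which the paper dismisses with ``the other two are similar.'' Running the same argument with the $k$-particle exclusion process, $k\in\{2,3\}$, requires $T\ge 2\,T_{\mathrm{EX}(k,f,G)}(2^{-10})$, and since $T$ is defined in terms of $T_{\mathrm{EX}(4,f,G)}(1/4)$, this amounts to the monotonicity $T_{\mathrm{EX}(k,f,G)}(1/4)\le T_{\mathrm{EX}(4,f,G)}(1/4)$. This is not automatic on hypergraphs: the standard route to such comparisons on graphs goes through negative correlation, which Section~\ref{s:neg_cor} shows fails here, and your suggested detour $T_{\mathrm{EX}(k)}\le T_{\mathrm{IP}(k)}\le T_{\mathrm{IP}(4)}$ does not close the gap precisely because (as Remark following Lemma~\ref{L:kto4} shows by example) $T_{\mathrm{IP}(4)}$ can be much larger than $T_{\mathrm{EX}(4)}$. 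Nor does it help to embed $\{a,b\}$ into a $4$-set $\{a,b,c,d\}$ and apply part (iii), since the relevant event (the one red landing at $a$ or $b$ specifically) is not a set-valued event and so cannot be read off from $\mathrm{EX}(4)$ alone. A clean repair that does not disturb the rest of the paper is to inflate the phase length to $T=20\max_{k\in\{2,3,4\}}T_{\mathrm{EX}(k,f,G)}(1/4)$; the later bounds (Lemmas~\ref{L:depinkbound}, \ref{L:kto4}) would then carry this max, and Lemma~\ref{L:ktok-1} (applied with $k=3$) shows $T_{\mathrm{EX}(3)}(1/4)=O(T_{\mathrm{EX}(2)}(1/4))$, so together with Lemma~\ref{L:4to2} the max is still $O(T_{\mathrm{EX}(2)}(1/4))$ and Theorem~\ref{T:mainexcl} is unaffected. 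So: your reduction for $k=4$ is the paper's, your concern about $k=2,3$ is well-founded and matches a genuine omission in the paper, but the monotonicity you'd need is not something you can safely assume here and your sketched coupling does not supply it.
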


We now present the main result of this section -- a version of
Lemma~\ref{L:lossofred} but proved for the \emph{modified} chameleon
process. As explained earlier in this section, this implies the
corresponding result for our original chameleon process.

\begin{lemma}\label{L:lossofred2}
	Suppose $|V|\ge 36$ and consider a modified chameleon process with initial configuration
	$({\bf z},R,P,W)$ satisfying $|P|<|R|\le |W|$. Then
	\[
	\E[|R_{2T-}|]\le (1-10^{-6})|R|.
	\]
\end{lemma}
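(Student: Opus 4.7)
Since every pinkening event during the colour-changing phase $(T,2T)$ recolours exactly one particle of $R$ and one particle of $W$ pink, we have
\[ |R_{2T-}| = |R| - \#\{\text{pinkenings in }(T,2T)\}, \]
so it suffices to prove $\E[\#\{\text{pinkenings}\}] \ge 10^{-6}|R|$. The overall strategy is to exploit the key independence between $I = I_{[0,T]}$ (which determines the configuration $I(R), I(W), I(P)$ at the start of the colour-changing phase) and the Poisson/permutation/coin data generating the events during $(T,2T)$ (which determines the quantities $\phi_a, t_a, F_a, \theta_{t_a}$). Because $T = 20\,T_{\mathrm{EX}(4,f,G)}$, the map $I$ is well mixed, which is exactly the hypothesis under which Proposition~\ref{P:red_white_mix} delivers lower bounds on the probability that small vertex sets have the right red/white composition.

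My plan is to apply Lemma~\ref{L:numberpinkenings} and decompose the sum over the triggering vertex $a$, splitting according to the four regimes (a 2-cycle on a 2-edge, a 2-cycle in a larger edge, a 3-cycle, or a cycle of length $\ge 4$). For each $a\in V$, condition on $\phi_a$ and the identity of $F_a$; this is independent of $I$. When $F_a\ne\ast$, the triple or quadruple of vertices that $F_a$ picks out is a fixed set (call it $S_a$), and the event that $a$ triggers a pinkening at time $\phi_a$ requires precisely that $a\in I(R)$ and $S_a\setminus\{a\}\subset I(W)$, together with $\theta_{t_a}=1$ in the cycle regimes (which contributes a factor $\tfrac12$). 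Proposition~\ref{P:red_white_mix}, combined with the (approximate) symmetry among elements of $S_a$ induced by uniformity of $I$, lower bounds this ``correct colouring'' probability by a universal constant times $|R||W|^{|S_a|-1}/|V|^{|S_a|}$. Using the hypothesis $|R|\le|W|$ (so $|W|\ge|V|/2 - |\mathbf O(\mathbf z)|/2$, and in particular $|W|/|V|$ is bounded below once $|V|\ge 36$ and $k\le|V|/2$), this is at least a universal constant times $|R|/|V|$.

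Assembling the bound: by Lemma~\ref{L:probF_a}, $\P[F_a\ne\ast\mid\phi_a<\infty]\ge 4/15$, and summing the previous display over $a\in V$ gives
\[ \E[\#\{\text{pinkenings}\}] \;\ge\; c \cdot |R| \cdot \frac{1}{|V|} \cdot \sum_{a\in V}\P[\phi_a<\infty] \]
for some universal $c>0$. The remaining (and main) obstacle is to show that $\sum_{a}\P[\phi_a<\infty]\ge c'|V|$, i.e., that a uniformly positive fraction of vertices are touched by some ring during the phase of length $T$. For this one uses that the Poisson rate is $2|E|$, that the hypergraph is regular, and that $T\ge T_{\mathrm{EX}(4,f,G)}$ is large enough (for $|V|\ge36$) to guarantee $\sum_a(1-e^{-2\deg(a)T})\ge c'|V|$; if needed, a standard comparison between the mixing time and the inverse total rate supplies the lower bound on $\deg(a)\,T$. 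Tracking the constants through Lemma~\ref{L:probF_a}, Proposition~\ref{P:red_white_mix}, the factor $\tfrac12$ from $\theta$, and the sum over $a$, one obtains the stated bound $10^{-6}$; the only subtlety is avoiding double-counting between pinkenings triggered by different $a$'s on the same ringing edge, which is handled using the symmetry observations in Remark~\ref{R:Fequalities} (each event in the four regimes corresponds to a canonical choice of $a$).
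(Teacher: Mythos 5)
Your proposal is correct and follows essentially the same route as the paper's proof: Lemma~\ref{L:numberpinkenings} to count pinkenings, independence of the $(F_a,\phi_a,\theta_{t_a})$ data from $I=I_{[0,T]}$, Proposition~\ref{P:red_white_mix} for the colour-composition probabilities, Lemma~\ref{L:probF_a} and regularity when summing over triggering vertices, and Remark~\ref{R:Fequalities} to handle over-counting. The one place you deviate is in bounding $\P\bra{\phi_a<\infty}$: you reach for $1-e^{-2\deg(a)T}$ and gesture at a comparison between mixing time and inverse total rate, whereas the paper uses the cleaner and more quantitative observation that $\P\bra{\phi_a=\infty}\le\P\bra{I_{(T,2T)}(a)=a}=\P\bra{a_T^{\mathrm{RW}}=a}\le 1/|V|+2^{-20}$ via $T\ge T_{\mathrm{RW}(f,G)}(2^{-20})$; your version can be made rigorous but requires an extra lemma relating $D\,T_{\mathrm{EX}(4,f,G)}(1/4)$ to a universal constant.
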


\begin{proof}
	Write $N(b)$ for the set of vertices that share at least one edge of the hypergraph 
	with $b$. By Lemma \ref{L:numberpinkenings}, we have
	\begin{align*}
	|R_{2T-}|&\le|R|-\sum_{b\in I(W)}\indic{\bigcup_{\substack{a\in N(b)}}\{F_a=b,\phi_a=\phi_b,\,a\in I(R)\}}\\
	&\phantom{\le |R|}-\sum_{b\in I(W)}\indic{\bigcup_{\substack{a,c\in N(b)}}\{F_a=(b,c),\phi_a=\phi_b=\phi_c,\,a\in I(R),c\in I(W),\,\theta_{t_a}=1\}} \\  
	&\phantom{\le |R|}-\sum_{b\in
		I(W)}\indic{\bigcup_{\substack{a,c,d\in N(b)}}\{F_a=(b,c,d),\phi_a=\phi_b=\phi_c=\phi_d,\,a\in I(R),c,d\in
		I(W),\,\theta_{t_a}=1\}}\\
	&=|R|- \sum_{b\in
		I(W)}\sum_{\substack{a\in N(b)}}\Bigg\{\indic{F_a=b,\phi_a=\phi_b,\,a\in I(R)}\\&\phantom{|R|- \sum_{b\in
			I(W)}\sum_{\substack{a\in N(b)}}\Bigg\{\,\,}+\sum_{c\in N(b)}\bigg\{\indic{F_a=(b,c),\phi_a=\phi_b=\phi_c,a\in I(R),c\in I(W),\theta_{t_a}=1}\\&\phantom{|R|- \sum_{b\in
			I(W)}\sum_{\substack{a\in N(b)}}\Bigg\{+\sum_{c\in N(b)}\bigg\{}+\sum_{d\in N(b)}\indic{F_a=(b,c,d),\phi_a=\phi_b=\phi_c=\phi_d,a\in I(R),c,d,\in I(W),\theta_{t_a}=1}\bigg\}\Bigg\}.
	\end{align*}
	
	Note now that
	the event $\{F_a=(b,c,d),\phi_a=\phi_b=\phi_c=\phi_d\}$ is determined entirely
	by the process after time $T$, and in particular is independent
	of the process between times 0 and $T$, and hence of the map $I
	= I_{[0,T]}$. This is also true for the event $\{F_a=(b,c),\phi_a=\phi_b=\phi_c\}$ and the event $\{F_a=b,\phi_a=\phi_b\}$. Recalling Remark \ref{R:Fequalities} we see that the expectation of the above is equal to\begin{align*}
	&|R|- \sum_{b\in
		V}\sum_{\substack{a\in N(b)}}\Bigg\{\frac1{2}\P\bra{F_a=b,\phi_a=\phi_b}\,\P\bra{|\{a,b\}\cap I(R)|=1,\,|\{a,b\}\cap I(W)|=1}\\&+\P\bra{\theta_{t_a}=1} \,\sum_{c\in N(b)}\bigg\{\frac1{3}\P\bra{F_a=(b,c),\phi_a=\phi_b=\phi_c}\,\P\bra{|\{a,b,c\}\cap I(R)|=1,\,|\{a,b,c\}\cap I(W)|=2}\\&+\sum_{d\in N(b)}\frac1{4}\P\bra{F_a=(b,c,d),\phi_a=\phi_b=\phi_c=\phi_d}\,\P\bra{|\{a,b,c,d\}\cap I(R)|=1,\,|\{a,b,c,d\}\cap I(W)|=3}\bigg\}\Bigg\}.
	\end{align*}Using Proposition~\ref{P:red_white_mix} and $\P\bra{\theta_{t_a}=1}=1/2$ we obtain the bound
	\begin{align}
	\label{eq:countpinks1}
	\E\bra{|R_{2T-}|}-|R|\le& -(1-2^{-9})^2\sum_{b\in V}\sum_{a\in N(b)}\Bigg\{\frac1{2}\frac{|R|\,|W|}{\binom{|V|}{2}}\P\bra{F_a=b,\phi_a=\phi_b}\\ \notag &+\sum_{c\in N(b)}\bigg\{\frac1{6}\frac{|R|\binom{|W|}{2}}{\binom{|V|}{3}}\P\bra{F_a=(b,c),\phi_a=\phi_b=\phi_c}\\ \notag &+\sum_{d\in N(b)}\frac1{8}\frac{|R|\binom{|W|}{3}}{\binom{|V|}{4}}\P\bra{F_a=(b,c,d),\phi_a=\phi_b=\phi_c=\phi_d}\bigg\}\Bigg\}.
	\end{align}
	Consider the final probability in the above equation. We can write it as
	\begin{align*}
	&\P\bra{F_a=(b,c,d),\phi_a=\phi_b=\phi_c=\phi_d}\\&=\sum_{\substack{e\in E:\\a,b,c,d\in e}}\P\bra{F_a=(b,c,d),\phi_a=\phi_b=\phi_c=\phi_d,e_{t_a}=e}\\
	&=\sum_{\substack{e\in E:\\a,b,c,d\in e}}\P\bra{F_a=(b,c,d)\big|\,|F_a|=3, e_{t_a}=e}\P\bra{|F_a|=3,\phi_a=\phi_b=\phi_c=\phi_d,e_{t_a}=e}\\&=
	\sum_{\substack{e\in E:\\a,b,c,d\in e}}\P\bra{F_a=(b,c,d)\big|\,|F_a|=3, e_{t_a}=e}\P\bra{|F_a|=3}\P\bra{\phi_a=\phi_b=\phi_c=\phi_d, e_{t_a}=e},
	\end{align*}
	where we have made use of the fact that the choice of the permutations (which determines $|F_a|$) is independent of the choice of the edges that ring.
	We next make use of the regularity of the hypergraph (and that all edges ring at the same rate) to obtain that $\P\bra{\phi_a=\phi_b=\phi_c=\phi_d, e_{t_a}=e}\ge 1/(4D)$ where $D$ is the degree of each vertex. Summing the above over $a,b,c,d$ gives
	\begin{align}\label{eq:F_a(1)=b}
	&\sum_{b\in V}\sum_{a,c,d\in N(b)}\P\bra{F_a=(b,c,d),\phi_a=\phi_b=\phi_c=\phi_d}
	\ge \frac1{4D}\sum_{e\in E}\sum_{a\in e}\P\bra{|F_a|=3}\,.
	\end{align} Similarly 
	\begin{align}\label{eq:F_a(1)=b2}
	&\sum_{b\in V}\sum_{a,c\in N(b)}\P\bra{F_a=(b,c),\phi_a=\phi_b=\phi_c}\ge\frac1{3D}\sum_{e\in E}\sum_{a\in e}\P\bra{|F_a|=2},
	\end{align}and
	\begin{align}\label{eq:F_a(1)=b3}
	&\sum_{b\in V}\sum_{a\in N(b)}\P\bra{F_a=b,\phi_a=\phi_b}\ge\frac1{2D}\sum_{e\in E}\sum_{a\in e}\P\bra{|F_a|=1}.
	\end{align}
	Combining \eqref{eq:countpinks1}, \eqref{eq:F_a(1)=b}, \eqref{eq:F_a(1)=b2} and \eqref{eq:F_a(1)=b3} gives  
	\begin{align*}
	\E\bra{|R_{2T-}|}-|R|\le& -\frac1{4D}(1-2^{-9})^2\frac1{8}\frac{|R|\binom{|W|}{3}}{\binom{|V|}{4}}\sum_{a\in V}\sum_{e:\,e\ni a}\P\bra{F_a\neq \ast}.
	\end{align*}
	Using Lemma \ref{L:probF_a},
	\begin{align}\label{eq:phiaphibFa}
	\P\bra{F_a\neq\ast}=\P\bra{\phi_a\neq\infty}\P\bra{F_a\neq\ast\,|\,\phi_a\neq\infty}\ge\frac4{15}\P\bra{\phi_a\neq\infty}.
	\end{align}
	Also,
	\[
	\P\bra{\phi_a=\infty}\le
	\P\bra{I_{(T,2T)}(a)=a}=\P\bra{a^\text{RW}_T=a},
	\]
	where $\{a_t^\text{RW}\}$ is a realisation of RW$(1,f,G)$ started from
	$a$. Since
	\[
	T=20T_{\mathrm{IP}(4,f,G)}(1/4)\ge 20 T_{\mathrm{RW}(f,G)}(1/4)\ge
	T_{\mathrm{RW}(f,G)}(2^{-20})
	\] (by Proposition~\ref{P:1to1eps}) we have
	\begin{align*}
	\P\bra{\phi_a=\infty}\le\P\bra{a_T^\text{RW}=a}\le
	\frac1{|V|}+2^{-20}.
	\end{align*}
	From \eqref{eq:phiaphibFa}, we deduce that
	\begin{equation}\label{eq:phiainfty}
	\P\bra{F_a\neq\ast}\ge\frac4{15}(1-2^{-20}-1/|V|)\,.
	\end{equation}
	Finally, the assumptions in Lemma~\ref{L:lossofred} on the sizes of the sets $P$, $R$ and $W$ imply that 
	\[
	3|W|\ge|W|+|R|+|P|=|V|-k+1\ge|V|/2\,,
	\]
	and since $|V|\ge36$ we arrive at our stated result:
	\begin{align*}
	\E\bra{|R_{2T-}|}-|R|\le& -\frac1{4D}(1-2^{-9})^2 \frac{|R|}{864|V|}\sum_{a\in V}\sum_{e:e\ni a} \frac4{15}(1-2^{-20}-1/|V|)\\
	=&-\frac1{4D}(1-2^{-9})^2\, \frac{|R|}{864|V|}\, |V|D \,\frac4{15}(1-2^{-20}-1/|V|)\\
	\le& -10^{-6}|R|\,.
	\end{align*}

\end{proof}

%%%%%%%%%%%%%%%%%%%%%%%%%%%%%%%%%%%%%%%
 \newpage
 
 \appendix
 \section{Case by case analysis from Proof of Lemma~\ref{L:ktok-1}}
 \label{Appendix3}
 \begin{figure}[!h]
  \centering
  \includegraphics[scale=.75]{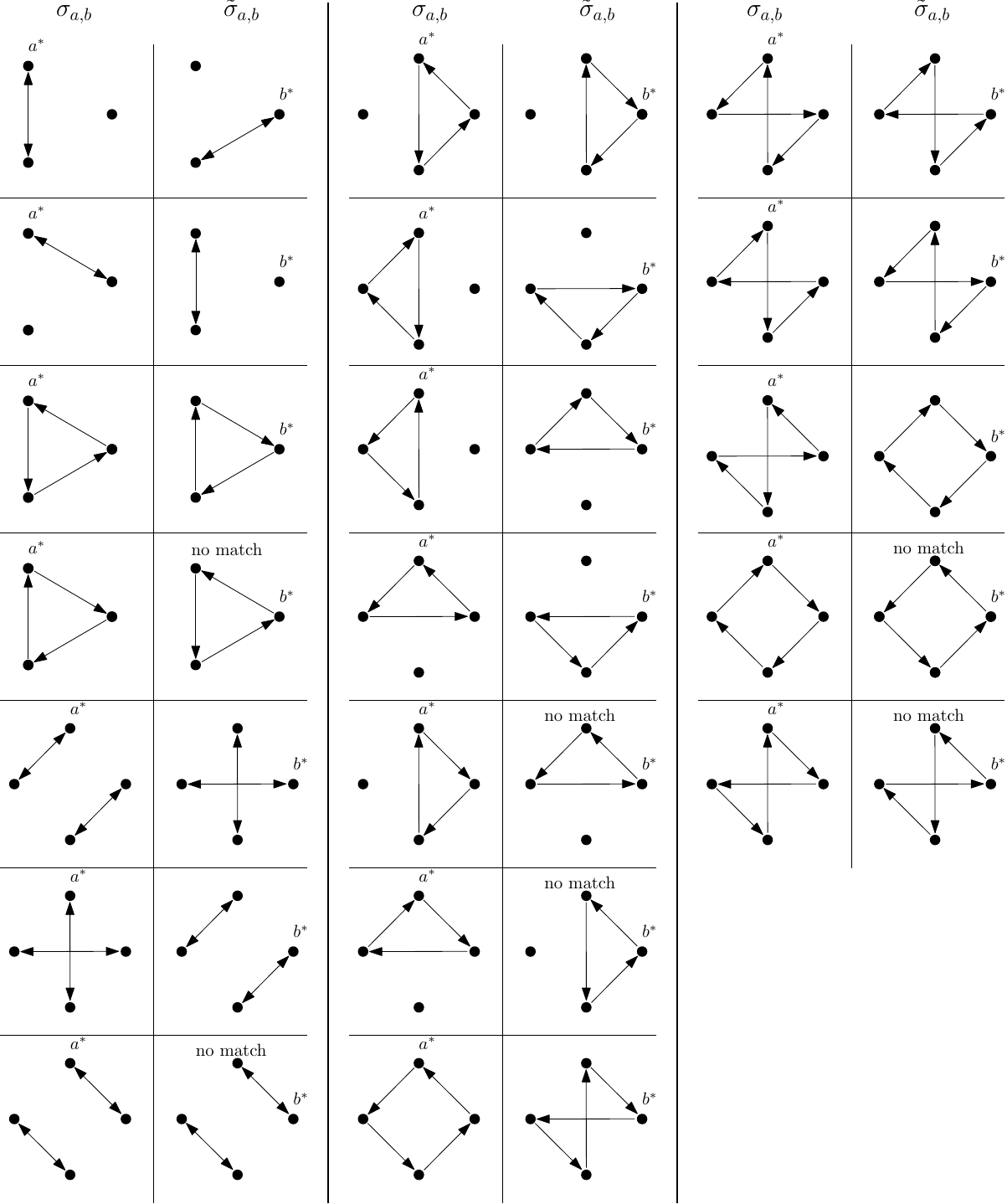}
 \caption{The bijection between $\sigma_{a,b}$ and $\tilde\sigma_{a,b}$. We see that there are certain failure permutations, for which no match occurs, where a match refers to the event that particles $a^\ast$ and $b^\ast$ are moved to the same location after applying permutations $\sigma_{a,b}$ and $\tilde\sigma_{a,b}$, respectively. For a fixed cycle structure we see that the probability of `no match' is at most 1/2, which is achieved when the edge-size is 3 and a cycle of size 3 is chosen.}\label{fig:lemma4.3}
\end{figure}

 \section{Technical proofs for Section~\ref{S:4to1}}\label{Appendix}
 
 Here we include some of the more technical proofs required to compare
 the mixing time of EX$(4,f,G)$ with that of EX$(2,f,G)$.

 \begin{proof}[Proof of Lemma~\ref{L:averagemeet}]
 	Since the hypergraph is non-easy there exists ${\bf x}\in V^2$ such
 	that
 	\[
 	\P\bra{M^{\mathrm{RW}}({\bf x})>10^{10}T}>1/1000.
 	\]
 	
 	We have
 	\begin{align*}
 	& \P\bra{M^{\mathrm{RW}}({\bf x})>10^{10}T}\\&\qquad=\E\big[\E[\indic{M^{\mathrm{RW}}({\bf x})>10^{10}T}|\,{\bf x}^{\mathrm{RW}}_{(10^{10}-40)T}]\big]\\
 	&\qquad=\E\bigg[\E\Big[\indic{M^{\mathrm{RW}}({\bf x})>(10^{10}-40)T}|\,{\bf x}^{\mathrm{RW}}_{(10^{10}-40)T}\Big]\E\Big[\indic{M^{\mathrm{RW}}({\bf x}_{(10^{10}-40)T}^{\mathrm{RW}})>40 T}|\,{\bf x}^{\mathrm{RW}}_{(10^{10}-40)T}\Big]\bigg]\\
 	&\qquad\le\E\left[\E\Big[\indic{M^{\mathrm{RW}}({\bf x})>(10^{10}-40)T}|\,{\bf x}^{\mathrm{RW}}_{(10^{10}-40)T}\Big]\sup_{{\bf y}\in V^2}\P\bra{M^{\mathrm{RW}}({\bf y})>40 T}\right]\\
 	&\qquad=\sup_{{\bf y}\in V^2}\P\bra{M^{\mathrm{RW}}({\bf y})>40 T}\P\left[M^{\mathrm{RW}}({\bf x})>(10^{10}-40)T\right]\\
 	&\qquad\le\left(\sup_{{\bf y}\in V^2}\P\bra{M^{\mathrm{RW}}({\bf y})>40
 		T}\right)^{\frac{10^{10}}{40}}.
 	\end{align*}
 	Hence there exists ${\bf y}\in V^2$ such that
 	\[
 	\P\bra{M^{\mathrm{RW}}({\bf y})>40 T}\ge
 	\left(\frac{1}{1000}\right)^{40(10^{-10})}> 1-10^{-7}.
 	\]

 	Now,
 	\begin{align*}
 	\P\bra{M^{\mathrm{RW}}({\bf y})>40
 		T}&\le\P\big[M^{\mathrm{RW}}({\bf y}_{20 T}^{\mathrm{RW}})>20
 	T\big]\\&=\sum_{{\bf v}\in V^2}\P\bra{{\bf y}_{20
 			T}^\mathrm{RW}={\bf v}}\P\big[M^\mathrm{RW}({\bf v})>20
 	T\big].
 	\end{align*}
 	However, by Definition~\eqref{eq:tvint} of total-variation,
 	Proposition \ref{P:1to1eps}, and the fact that (by the contraction
 	principle) $T_{\mathrm{RW}(2,f,G)}(1/4)\le
 	T_{\mathrm{EX}(2,f,G)}(1/4)$,
 	\begin{align*}
 	\sum_{{\bf v}\in V^2}\P\bra{{\bf y}_{20 T}^\mathrm{RW}={\bf
 			v}}\P\big[M^\mathrm{RW}({\bf v})>20 T\big]-\sum_{{\bf v}\in
 		V^2}\frac{\P\big[M^\mathrm{RW}({\bf v})>20
 		T\big]}{|V|^2}\\\le\|\cL[{\bf y}_{20
 		T}^\mathrm{RW}]-\mathrm{Unif}(V^2)\|\tv\le 2^{-20}.
 	\end{align*}

 	Hence
 	\begin{align*}
 	\sum_{{\bf v}\in V^2}\frac{\P\big[M^\mathrm{RW}({\bf v})>20
 		T\big]}{|V|^2}&\ge
 	\sum_{{\bf v}\in V^2}\P\bra{{\bf y}_{20 T}^\mathrm{RW}={\bf v}}\P\big[M^\mathrm{RW}({\bf v})>20 T\big]-2^{-20}\\
 	&\ge 1-10^{-7}-2^{-20}\\
 	&\ge 1-\frac1{1000}.\qedhere
 	\end{align*}
 \end{proof}
 
 \begin{proof}[Proof of Lemma~\ref{L:MRWxbound}]
 	We begin by conditioning on the value of ${\bf O}({\bf
 		x})_{20T}^\mathrm{EX}$.
 	\begin{align*}
 	&\P\bra{\bar M^\mathrm{RW}({\bf O}({\bf x})_{20T}^\mathrm{EX})\le
 		20T}\\&\qquad=\sum_{\{a_1,a_2,a_3,a_4\}\in \binom{V}{4}} \P\bra{\bar
 		M^\mathrm{RW}(\{a_1,a_2,a_3,a_4\})\le 20T}\,\P\bra{{\bf O}({\bf
 			x})_{20T}^\mathrm{EX}=\{a_1,a_2,a_3,a_4\}}.
 	\end{align*}
 	For each ${\bf a}\in (V)_4$, we have
 	\[
 	\P\bra{\bar M^\mathrm{RW}({\bf O}({\bf a}))\le 20T}\le
 	\sum_{i=2}^4\sum_{j=1}^{i-1}\P\bra{M^\mathrm{RW}(({\bf a}(i),{\bf
 			a}(j)))\le 20T},
 	\]
 	and so
 	\begin{align}\notag
 	&\P\bra{\bar M^\mathrm{RW}({\bf O}({\bf x})_{20T}^\mathrm{EX})\le
 		20T}\\\notag&\le \sum_{\{u_1,u_2\}\in
 		\binom{V}{2}}\P\bra{M^\mathrm{RW}((u_1,u_2))\le 20T}\P\bra{{\bf O}({\bf x})_{20T}^{\mathrm{EX}}\supset\{u_1,u_2\}}\\ &\le\sum_{i=2}^4\sum_{j=1}^{i-1}\sum_{\{u_1,u_2\}\in
 		\binom{V}{2}}\P\bra{M^\mathrm{RW}((u_1,u_2))\le 20T}\P\bra{\{{\bf x}(i),{\bf x}(j)\}_{20T}^{\mathrm{EX}}=\{u_1,u_2\}}.\label{eq:MRW}
 	\end{align}
 	Now, for each $1\le j<i\le 4$ (and motivated by
 	\cite{Oliveira2013a}) set
 	\[
 	\mathrm{Good}_{i,j}:=\left\{ \{a,b\}\in \binom{V}{2}:\,
 	\Big|\P\bra{\{{\bf x}(i),{\bf
 			x}(j)\}_{20T}^\mathrm{EX}=\{a,b\}}-\frac1{\binom{|V|}{2}}\Big|\le\frac{\eps}{\binom{|V|}{2}}\right\}.
 	\]
 	We decompose the sum over ${\bf u}$ above into ${\bf u}\in
 	\mathrm{Good}_{i,j}$ and ${\bf u}\in\mathrm{Bad}_{i,j}$,
 	where $$\mathrm{Bad}_{i,j}=\binom{V}{2}\setminus\mathrm{Good}_{i,j}.$$
 	For the Good terms, we have
 	\begin{align}\notag
 	&\sum_{i=2}^4\sum_{j=1}^{i-1}\sum_{\{u_1,u_2\}\in\mathrm{Good}_{i,j}}\P\bra{M^\mathrm{RW}((u_1,u_2))\le
 		20T}\P\bra{\{{\bf x}(i),{\bf x}(j)\}_{20T}^\mathrm{EX}= \{u_1,u_2\}}\\\notag
 	&\le\sum_{i=2}^4\sum_{j=1}^{i-1}\sum_{\{u_1,u_2\}\in\mathrm{Good}_{i,j}}\P\bra{M^\mathrm{RW}((u_1,u_2))\le
 		20T}\frac{(1+\eps)}{\binom{|V|}{2}}\\\label{eq:good} &\le
 	25(1+\eps)\sum_{{\bf u}\in V^2}\frac{\P\bra{M^\mathrm{RW}({\bf
 				u})\le20T}}{|V|^2}.
 	\end{align}
 	For the Bad terms, we have
 	\begin{align}\notag
 	&\sum_{i=2}^4\sum_{j=1}^{i-1}\sum_{\{u_1,u_2\}\in\mathrm{Bad}_{i,j}}\P\bra{M^\mathrm{RW}((u_1,u_2) )\le
 		20T}\P\bra{\{{\bf x}(i),{\bf x}(j)\}_{20T}^\mathrm{EX}= \{u_1,u_2\}}\\\notag &\le
 	\sum_{i=2}^4\sum_{j=1}^{i-1}\sum_{\{u_1,u_2\}\in\mathrm{Bad}_{i,j}}\P\bra{\{{\bf x}(i),{\bf
 			x}(j)\}_{20T}^\mathrm{EX}= \{u_1,u_2\}}\\\label{eq:bad}
 	&=\sum_{i=2}^4\sum_{j=1}^{i-1}\P\bra{\{{\bf x}(i),{\bf
 			x}(j)\}_{20T}^\mathrm{EX} \in\mathrm{Bad}_{i,j}}.
 	\end{align}
 	Note that for each $1\le j<i\le 4$,
 	\begin{align*}
 	\|\cL({\bf x}_{20T}^\mathrm{EX}) - \textrm{Uniform}\|\tv &=
 		\frac12 \sum_{\{u_1,u_2\}\in \binom{V}{2}}\Big|\P\bra{\{{\bf x}(i),{\bf
 			x}(j)\}_{20T}^\mathrm{EX}= \{{\bf u}(1),{\bf
 			u}(2)\}}-\frac1{\binom{|V|}{2}}\Big|\\
 	&>\frac12\frac{\eps}{\binom{|V|}{2}}|\mathrm{Bad}_{i,j}|\,,
 	\end{align*}
 	since every $\{u_1,u_2\}\in\mathrm{Bad}_{i,j}$ contributes at least
 	$\eps/\binom{|V|}{2}$ to the sum. However, the left-hand side in the
 	above equation is at most $2^{-20}$ by the choice of $T$. We deduce that
 	\[
 	|\mathrm{Bad}_{i,j}|\le \eps^{-1}2^{-19}\binom{|V|}{2},
 	\]
 	and thus
 	\[
 	|\mathrm{Good}_{i,j}|\ge (1-\eps^{-1}2^{-19})\binom{|V|}{2}.
 	\]
 	However, for each $\{u_1,u_2\}\in\mathrm{Good}_{i,j}$ we know that
 	\[
 	\P\bra{\{{\bf x}(i),{\bf x}(j)\}_{20T}^\mathrm{EX}= \{u_1,u_2\}}\ge\frac{1-\eps}{\binom{|V|}{2}}.
 	\]Therefore,
 	\[
 	\P\bra{\{{\bf x}(i),{\bf
 			x}(j)\}_{20T}^\mathrm{EX}\in\mathrm{Good}_{i,j}\}}\ge
 	\frac{1-\eps}{\binom{|V|}{2}}|\mathrm{Good}_{i,j}|\ge
 	1-\eps-\eps^{-1}2^{-19}.
 	\]
 	We deduce that
 	\[
 	\P\bra{\{{\bf x}(i),{\bf
 			x}(j)\}_{20T}^\mathrm{EX}\in\mathrm{Bad}_{i,j}\}}\le
 	\eps+\eps^{-1}2^{-19}.
 	\]
 	Plugging this into \eqref{eq:bad} gives
 	\begin{align*}
 	&\sum_{i=2}^4\sum_{j=1}^{i-1}\sum_{\{u_1,u_2\}\in\mathrm{Bad}_{i,j}}\P\bra{M^\mathrm{RW}((u_1,u_2))\le
 		20T}\P\bra{\{{\bf x}(i),{\bf x}(j)\}_{20T}^\mathrm{EX}= \{u_1,u_2\}}\\&\qquad\le 12(\eps+\eps^{-1}2^{-19}) .\end{align*} Combining
 	this with \eqref{eq:good} and \eqref{eq:MRW} gives
 	\[
 	\P\bra{\bar M^\mathrm{RW}({\bf x}_{20T}^\mathrm{IP})\le 20T}\le
 	12(\eps+\eps^{-1}2^{-19})+25(1+\eps)\sum_{{\bf u}\in
 		V^2}\frac{\P\bra{M^\mathrm{RW}({\bf u})\le20T}}{|V|^2}.\qedhere
 	\]
 	
 \end{proof}
 
 \begin{proof}[Proof of Proposition~\ref{P:couplingIPwithRW}]
 	This proof is similar to the proof of Proposition 4.6 in
 	\citep{Oliveira2013a}. By the graphical construction of Section~\ref{S:graphical}, ${\bf O}({\bf x})_s^{\mathrm{EX}}$ and ${\bf O}({\bf x}_s^\mathrm{IP})$ have the same distribution. Thus by the contraction principle it suffices to show that 
 	\[
 	\|\mathcal{L}[{\bf x}_s^\mathrm{RW}]-\mathcal{L}[{\bf x}_s^\mathrm{IP}]\|\tv\le\P\bra{\bar M^\mathrm{RW}({\bf O}({\bf x}))\le s}.
 	\]
 	We present a coupling of $\{{\bf
 		x}_t^\mathrm{IP}\}_{t\ge0}$ and $\{{\bf
 		x}_t^\mathrm{RW}\}_{t\ge0}$ such that the two processes agree
 	up to time $\bar M^\mathrm{RW}({\bf O}({\bf x}))$. The coupling has
 	state-space $S:=(V)_2\times V^2$ which we split into two parts:
 	$\Delta:=\{({\bf z},{\bf z}):\,{\bf z}\in (V)_2\}$ and
 	$\Delta^\complement$. Denote by $q(\cdot,\cdot)$ the transition
 	rates. We construct the coupling as follows:
 	\begin{enumerate}
 		\item if $({\bf x},{\bf y})\in\Delta^\complement$, the
 		transition rate to any other state in $S$ is the same as that
 		of independent realisations of IP$(4,f,G)$ and RW$(4,f,G)$.
 		\item if $({\bf x},{\bf x})\in\Delta$,
 		\begin{enumerate}
 			\item for $e\in E$ with $|e\cap\{x(1),x(2),x(3),x(4)\}|=1$ and
 			for each $\sigma_e\in \cS_e$,
 			\[
 			q\big(({\bf x},{\bf x}),(\sigma_e({\bf x}),\sigma_e({\bf
 				x}))\big)=f_e(\sigma_e).
 			\]
 			\item for $e\in E$ with $|e\cap\{{\bf x}(1),{\bf x}(2),{\bf
 				x}(3),{\bf x}(4)\}|>1$ and for each $\sigma_e\in\cS_e$,
 			\begin{align*}
 			&q\Big(\big({\bf x},{\bf x}\big),\big(\sigma_e({\bf x}),(\sigma_e({\bf x}(1)),{\bf x}(2),{\bf x}(3),{\bf x}(4))\big)\Big)=f_e(\sigma_e),\\
 			&q\Big(\big({\bf x},{\bf x}\big),\big({\bf x},({\bf x}(1),\sigma_e({\bf x}(2)),{\bf x}(3),{\bf x}(4))\big)\Big)=f_e(\sigma_e),\\
 			&q\Big(\big({\bf x},{\bf x}\big),\big({\bf x},({\bf x}(1),{\bf x}(2),\sigma_e({\bf x}(3)),{\bf x}(4))\big)\Big)=f_e(\sigma_e),\\
 			&q\Big(\big({\bf x},{\bf x}\big),\big({\bf x},({\bf
 				x}(1),{\bf x}(2),{\bf x}(3),\sigma_e({\bf
 				x}(4)))\big)\Big)=f_e(\sigma_e).
 			\end{align*}
 		\end{enumerate}
 		\item all other transitions have rate 0.
 	\end{enumerate}
 	By inspection of the marginals it is clear that this indeed
 	gives a coupling of the two processes. Furthermore, if we start
 	the coupling from a state ${\bf x}\in\Delta$, the two processes
 	can only differ after a transition has occurred according to
 	rule 2.(b); but the first time this happens is precisely $\bar
 	M^\mathrm{RW}({\bf O}({\bf x}))$.
 \end{proof}

 \section{Technical proofs for Section~\ref{S:chamhasprops}}\label{Appendix2}
 
 Here we include proofs of some of the results used in Section~\ref{S:num_pinkenings}.
 
 \begin{proof}[Proof of Lemma~\ref{L:numberpinkenings}]
 	We shall show that in each situation the particle at vertex $b$ at time $T$ is pinkened during $(T,2T)$. 
 	
 	In the first situation with $F_a=b$ for some $a\in I(R)$ with $\phi_a=\phi_b$, we deduce that $|e_{t_a}|=2$ and $I_{[T,\phi_a)}(b)=\sigma_{t_a}(a)$. Since $\phi_a=\phi_b$, we have \[\sigma_{t_a}(a)=I_{[T,\phi_a)}(b)=I_{[T,\phi_b)}(b)=b.\]
 	Since $a\in I(R)$, $b\in I(W)$ and $\phi_a=\phi_b$, we have that $a\in I_{[0,\phi_a)}(R)$ and $b\in I_{[0,\phi_a)}(W)$. This implies that the particle at $b$ at time $T$ (and also the particle at $a$ at time $T$) is pinkened at time $\phi_a$.
 	
 	In the second situation with $F_a=(b,c)$ for some $a\in I(R)$, $c\in I(W)$ with $\phi_a=\phi_b=\phi_c$ and $\theta_{t_a}=1$, we deduce that $|c_{t_a}|=3$ and $I_{[T,\phi_a)}(b,c)=(c_{t_a}(a),c_{t_a}^2(a))$. Since $\phi_a=\phi_b=\phi_c$, we have
 	\[
 	(c_{t_a}(a),c^2_{t_a}(a))=(b,c).
 	\]
 	Since $a\in I(R)$, $b,c\in I(W)$ and $\phi_a=\phi_b=\phi_c$, we have that $a\in I_{[0,\phi_a)}(R)$ and $b, c\in I_{[0,\phi_a)}(W)$ and hence it is immediate that there exists $1\le i\le K$ with $A_i(R_{\phi_a-},W_{\phi_a-},c_{\phi_a})=\{0\}$. Since $\theta_{t_a}=1$ we deduce that the particle at $b$ at time $T$ is pinkened at time $\phi_a$.
 	
 	In the third situation with $F_a=(b,c,d)$ for some $a\in I(R)$, $c,d\in I(W)$ with $\phi_a=\phi_b=\phi_c=\phi_d$ and $\theta_{t_a}=1$ we have two cases. The first case is if $|c_{t_a}|=2$. We denote by 
 	\[
 	(a_1\,\,a_2)\prec\cdots\prec(a_{l-1}\,\,a_l)
 	\]the ordered transpositions in the cyclic decomposition of $\sigma_{t_a}$ and denote by $\rho_0$ the composition of these transpositions. There are four sub-cases which are all similar, and we just prove the result for one of them. So suppose there exists $j\in\{1,\ldots,\lfloor l/4\rfloor\}$ with $a=a_{4j-3}$.
 	Then we have $I_{[T,\phi_a)}(b,c,d)=(a_{4j-1},a_{4j-2},a_{4j}).$ Since $\phi_a=\phi_b=\phi_c=\phi_d$, we have 
 	\[
 	(a_{4j-1},a_{4j-2},a_{4j})=(b,c,d).
 	\]
 	Since $a\in I(R)$, $b,c,d\in I(W)$ and $\phi_a=\phi_b=\phi_c=\phi_d$, we have that $a\in I_{[0,\phi_a)}(R)$ and $b,c,d\in I_{[0,\phi_a)}(W)$ and hence $j\in A_0(R_{\phi_a-},W_{\phi_a-},\rho_0)$ with $L_{\phi_a}^{0,j}=\{a,b\}$. Since $\theta_{t_a}=1$ we deduce that the particle at $b$ at time $T$ is pinkened at time $\phi_a$. The other three sub-cases follow similarly.
 	
 	The second possibility when $F_a=(b,c,d)$ is that $|c_{t_a}|\ge 4$. Again there are four sub-cases which are all similar, and we just prove the result for one of them. So suppose there exists $j\in\{1,\dots,d'\}$ with $a=c_{t_a}^{2j-2}(m)$. Then we have $I_{[T,\phi_a)}(b,c,d)=(c_{t_a}^{2d'+2j-2}(m),c_{t_a}^{2j-1}(m),c_{t_a}^{2d'+2j-1}(m))$.  Since $\phi_a=\phi_b=\phi_c=\phi_d$, we have 
 	\[
 	(c_{t_a}^{2d'+2j-2}(m),c_{t_a}^{2j-1}(m),c_{t_a}^{2d'+2j-1}(m))=(b,c,d).
 	\]
 	Since $a\in I(R)$, $b,c,d\in I(W)$ and $\phi_a=\phi_b=\phi_c=\phi_d$, we have that $a\in I_{[0,\phi_a)}(R)$ and $b,c,d\in I_{[0,\phi_a)}(W)$ and hence $j\in A_i(R_{\phi_a-},W_{\phi_a-},\sigma_{t_a})$ for some $1\le i\le K$ with $L_{\phi_a}^{i,j}=\{a,b\}$. Since $\theta_{t_a}=1$ we deduce that the particle at $b$ at time $T$ is pinkened at time $\phi_a$. 
 	 \end{proof}

 \begin{proof}[Proof of Lemma~\ref{L:probF_a}]
 	For a fixed $a\in V$ we wish to upper bound
 	$\P\bra{F_a=\ast\,|\,\phi_a\neq\infty}$. Recall that we write $c_{t_a}$ for
 	the cycle of $\sigma_{t_a}$ containing vertex $a$, $d'$ for $\lfloor\frac{|c_{t_a}|}{4}\rfloor$, and $m$ for the smallest element in $c_{t_a}$. On the event
 	$\{\phi_a\neq\ast\}$, the event $\{F_a=\ast\}$ is equivalent to the
 	event that at time $\phi_a$ one of the following occurs:
 	\begin{align*}
 	&\text{Event $B_1$: $|c_{t_a}|=1$},\\
 	&\text{Event $B_2$: $|e_{t_a}|\ge 3$, $|c_{t_a}|=2$ and $l=2$ i.e. there is only one transposition and it contains vertex $a$},\\
 	&\text{Event $B_3$: $|e_{t_a}|\ge 3$, $|c_{t_a}|=2$, $l\neq 2$, $a\in\{a_{l-1},a_l\}$ and $l\notin 4\mathbb{N}$},\\
 	&\text{Event $B_4$: $|e_{t_a}|\ge 3$, $|c_{t_a}|=2$, there exists $j\in\{1,\ldots,\lfloor l/4\rfloor\}$ with $a\in\{a_{4j-3},a_{4j-2},a_{4j-1},a_{4j}\}$}\\&\phantom{\text{Event $B_4$;}}\text{ and $a_{4j-1}>a_{4j-2}$},\\
 	&\text{Event $B_5$: $|c_{t_a}|\ge 4$ but there does not exist a
 		$j\in\{1,\ldots,
 		d'\}$ and} \\
 	&\qquad\qquad\qquad\text{$r\in\{2j-2,2d'(c_{t_a})+2j-2,2j-1,2d'(c_{t_a})+2j-1\}$
 		with $a=c_{t_a}^r(m)$.}
 	\end{align*}
 	These events are all disjoint so we have $\P\bra{\bigcup_{i=1}^5B_i}=\sum_{i=1}^5\P\bra{B_i}$. Now, \[\P\bra{B_2}=\P\bra{|c_{t_a}|=2\,\big|\,|e_{t_a}|\ge3,l=2}\P\bra{|e_{t_a}|\ge3,l=2}\le \frac2{3}\P\bra{|e_{t_a}\,|\ge3,l=2},\]
 	by part~\ref{assumpf-conjugacy} of Assumption~\ref{assumpf}. Next, \begin{align*}
 	&\P\bra{B_3}\\&=\P\bra{a\in\{a_{n-1},a_n\}\,\big|\,|e_{t_a}|\ge3,|c_{t_a}|=2,l\neq2,l\notin4\mathbb{N}}\P\bra{|e_{t_a}|\ge3,|c_{t_a}|=2,l\neq2,l\notin4\mathbb{N}}\\&\le\frac1{3}\P\bra{|e_{t_a}|\ge3,|c_{t_a}|=2,l\neq2,l\notin4\mathbb{N}},
 	\end{align*}
 	again by part~\ref{assumpf-conjugacy} of Assumption~\ref{assumpf} and noting that the first probability is maximised when $l= 6$. To deal with event $B_4$, we condition on the values of the two sets $\{a_1,\ldots,a_l\}$ and $\{a_1,\ldots,a_{4j-4}\}$. Now $a_{4j-3}$ is, by construction, the smallest element in $\{a_1,\ldots,a_l\}\setminus\{a_1,\ldots,a_{4j-4}\}$ and so can be identified under the conditioning, and $a_{4j-2}$ is uniform on $\{a_1,\ldots,a_l\}\setminus\{a_1,\ldots,a_{4j-4},a_{4j-3}\}$. Since $a_{4j-1}$ is the smallest element in $\{a_1,\ldots,a_l\}\setminus\{a_1,\ldots,a_{4j-2}\}$, we see that, under this conditioning, the event that $a_{4j-1}>a_{4j-2}$ is the same as the event that $a_{4j-2}$ is chosen to be the smallest element in $\{a_1,\ldots,a_l\}\setminus\{a_1,\ldots,a_{4j-3}\}$. Under the conditioning, this event has probability at most $1/3$ which is achieved when $l=4$ (and so $j=1$). We deduce that 
 	\[
 	\P\bra{B_4}\le\frac1{3}\P\bra{|e_{t_a}|\ge 3, |c_{t_a}|=2, \,\exists j\in\{1,\ldots,\lfloor l/4\rfloor\}:\,a\in\{a_{4j-3},a_{4j-2},a_{4j-1},a_{4j}\}}.
 	\] For the final event we have 
 	\[
 	\P\bra{B_5}\le\frac3{7}\P\bra{|c_{t_a}|\ge4},
 	\]
 	since the worst possible case is when $|c_{t_a}|= 7$ (and so $d'=1$). Combining these gives
 	\begin{align*}
 	& \P\bra{F_a=\ast|\,\phi_a\neq\infty}\\&\le \P\bra{|c_{t_a}|=1}+\frac2{3}\Big(\P\bra{|e_{t_a}|\ge3,l=2}+\P\bra{|e_{t_a}|\ge3,|c_{t_a}|=2,l\neq2,l\notin4\mathbb{N}}\\&\phantom{\le \P\bra{|c_{t_a}|=1}+\frac2{3}\Big(}+\P\bra{|e_{t_a}|\ge 3, |c_{t_a}|=2, \,\exists j\in\{1,\ldots,\lfloor l/4\rfloor\}:\,a\in\{a_{4j-3},a_{4j-2},a_{4j-1},a_{4j}\}}\\&\phantom{\le \P\bra{|c_{t_a}|=1}+\frac2{3}\Big(}+\P\bra{|c_{t_a}|\ge4}\Big)\\
 	&\le \P\bra{|c_{t_a}|=1}+\frac2{3}\big(1-\P\bra{|c_{t_a}|=1}\big)\le\frac{11}{15},
 	\end{align*}by part~\ref{assumpf-fixed} of Assumption~\ref{assumpf}.
 	 \end{proof}
 
 \begin{proof}[Proof of Proposition~\ref{P:red_white_mix}] We prove just the third statement, as the other two are similar. Let $A_t^\mathrm{EX}$ be a realisation
 	of EX$(4,f,G)$ with $A_0^\mathrm{EX}=\{a,b,c,d\}$. Then
 	\begin{align*}
 	&\P\bra{|\{a,b,c,d\}\cap I(R)|=1,|\{a,b,c,d\}\cap I(W)|=3} \\&\qquad =
 	\P\bra{|A_T^{\mathrm{EX}} \cap R|=1,|A_T^\mathrm{EX}\cap W|=3}
 	\\
 	&\qquad\ge (1-2^{-9})^2 \frac{|R|\binom{|W|}{3}}{\binom{|V|}{4}}\,,
 	\end{align*}
 	where the inequality follows from Propositions~\ref{P:1to1eps}
 	and \ref{P:AldousFill}, since $T=20T_{\mathrm{EX}(4,f,G)}$.
 \end{proof}

 \bibliographystyle{chicago} \bibliography{hypergraph}
   
 \vspace{2cm}
 \noindent
 SBC: Department of Mathematics, University of York, York, YO10 5DD, UK. \\
 Email: \url{stephen.connor@york.ac.uk}\\
 WWW: \url{maths.york.ac.uk/www/sbc502}

 \medskip
 \noindent
 RJP: Department of Economics, Mathematics and Statistics, Birkbeck, University of London, London, WC1E 7HX, UK. \\
 Email: \url{r.pymar@bbk.ac.uk}\\
 WWW: \url{bbk.ac.uk/ems/faculty/richard-pymar}

\end{document}